\documentclass[a4paper,12pt,notitlepage]{amsart}

\usepackage[utf8]{inputenc}
\usepackage[spanish,english]{babel}

\usepackage{lmodern} 

\usepackage[
    margin=28mm,
    marginparwidth=25mm,
    marginparsep=2mm,
    bottom=25mm
    ]{geometry}

\usepackage{amsmath}
\usepackage{amsfonts}
\usepackage{amssymb}
\usepackage{amsthm}

\usepackage{mathrsfs}

\usepackage{xcolor}

\usepackage{tcolorbox}

\usepackage{caption}

\usepackage[pdfencoding=unicode, psdextra]{hyperref}

\usepackage{cleveref}

\usepackage[shortlabels]{enumitem}

\usepackage{mathtools}

\usepackage{csquotes}

\usepackage[sortcites,backend=biber, style=numeric, maxbibnames=10,citestyle=numeric-comp]{biblatex} 
\DeclareNameAlias{author}{family-given} 
\DeclareFieldFormat[misc]{title}{``#1''} 
\newtheorem{theorem}{Theorem}[section]
\newtheorem{corollary}[theorem]{Corollary}
\newtheorem{proposition}[theorem]{Proposition}
\newtheorem{lemma}[theorem]{Lemma}
\newtheorem{remark}[theorem]{\it Remark}
\newtheorem{definition}[theorem]{Definition}

\newcommand\abs[1]{\left|#1\right|}
\newcommand\norm[1]{\left\Vert#1\right\Vert}

\newcommand{\N}{\mathbb N}
\newcommand{\Z}{\mathbb Z}
\newcommand{\R}{\mathbb R}
\newcommand{\C}{\mathbb C}

\newcommand{\e}{\varepsilon}

\def\S{\mathbb S}
\def\SS{\mathcal S}
\def\CC{\mathcal C}
\def\LL{\mathcal L}

\def\BB{\mathcal B}
\def\MM{\mathcal M}

\newcommand{\Ca}{\mathcal{C}_\alpha}
\newcommand{\Real}{{\mathfrak R}{\mathfrak e}\,}

\newcommand{\ran}{\operatorname{Ran}}
\newcommand{\lat}{\operatorname{Lat}}
\newcommand{\dist}{\operatorname{dist}}

\newcommand{\codim}{\operatorname{codim}}
\newcommand{\quotient}[2]{{\raisebox{.2em}{$#1$}\left/\raisebox{-.2em}{$#2$}\right.}}

\DeclareMathOperator*{\esssup}{ess\,sup}

\numberwithin{equation}{section}

\begin{document}
	
	\title[Cesàro-Hardy operators]{Cesàro-Hardy operators on $L^p[0,1]$: fine spectrum, weighted Koopman semigroups and invariant subspaces}

	\author[L. Abadias]{Luciano Abadias}
    \address{\newline
        Luciano Abadías \newline
        {\scriptsize Departamento de Matemáticas,
        Instituto Universitario de Matemáticas y Aplicaciones,
        Universidad de Zaragoza,
        50009 Zaragoza, Spain.}}
    \email{labadias@unizar.es}

    \author[A. Mahillo]{Alejandro Mahillo}
    \address{\newline
        Alejandro Mahillo \newline
        Departamento de Matemáticas, Estadística y Computación,
        Universidad de Cantabria,
        39005 Santander, Spain.}
    \email{alejandro.mahillo@unican.es}

    \author[P.J. Miana]{Pedro J. Miana}
    \address{\newline
        Pedro J. Miana \newline
        {\scriptsize Departamento de Matemáticas,
        Instituto Universitario de Matemáticas y Aplicaciones,
        Universidad de Zaragoza,
        50009 Zaragoza, Spain.}}
    \email{pjmiana@unizar.es}

	\thanks{Authors have been partially supported by Grant PID2022-137294NBI00 funded by MICIU/AEI/10.13039/501100011033 and ``ERDF/EU'' and Project E48-20R, Gobierno de Aragón, Spain. The second author has been partially supported by Grant PRE-2020-092311 funded by MICIU/AEI/10.13039/501100011033 and, by ``ESF Investing in your future''.}

	\subjclass[2020]{47A10, 47A15, 47B33, 47D03, 47G10}
	
	\keywords{Cesàro-Hardy operators, fine spectrum, weighted Koopman semigroups, Invariant Subspace Problem, universal operators}

\begin{abstract}

    In this paper we study boundedness and detailed spectral properties for the Cesàro-Hardy operator and some generalizations in $L^p[0,1]$. The study employs $C_0$-semigroup theory, expressing the Cesàro-Hardy operators and their dual operators through subordination with $C_0$-semigroups $T(t)$ and $S(t)$ respectively. The spectral properties of the semigroup’s infinitesimal generators are transferred to the Cesàro-Hardy operators using functional calculus methods. Furthermore, some implications for the Invariant Subspace Problem are explored by demonstrating the universality of certain translations related to the semigroup $T(t),$  and providing results on the invariant subspaces of these operators.
\end{abstract}

\maketitle

\section{Introduction}
\label{sec:introduction}

Our starting point is the work of A. Brown, P. R. Halmos and A. L. Shields, \cite{BrownHalmosShields1965}. In their paper, the Cesàro-Hardy operators, $C_0, C_1$ and $C_\infty$ were defined on $\ell^2, L^2[0,1]$ and $L^2(0,\infty)$ respectively by
\[
    C_0 f(n) = \frac{1}{n+1} \sum_{j=0}^n f(j),\quad C_1 f(x) = \frac{1}{x} \int_0^x f(s) \, ds \quad \text{and} \quad C_\infty f(x) = \frac{1}{x} \int_0^x f(s) \, ds.
\]
These operators are bounded due to the Hardy inequality (see \cite[Section 9.8]{HardyLittlewoodPolya1952}). Furthermore, the authors characterized their spectrum and point spectrum, offering insights beyond simple boundedness. Building on this foundation, D. W. Boyd \cite{Boyd1968} extended these spectral results for the operator $C_\infty$ to the spaces $L^p(0,\infty)$ for $1 < p \leq \infty$, while G. Leibowitz followed suit with the operators $C_0$ and $C_1$ on the spaces $\ell^p$ and $L^p[0,1]$ for $1 < p \leq \infty$ respectively \cite{Leibowitz72,Leibowitz73}. In \cite{Muntean1980}, the author computed the approximate and continuous spectrum of $C_1,$ and in \cite{Erovenko1987} the essential spectrum of $C_0,C_1$ and $C_{\infty}$ was studied.

Since then, the spectrum and point spectrum of the Cesàro-Hardy operators have been extensively studied across various function spaces, including $C[0,1]$, $C_l[0,\infty]$, $C(0,\infty)$, $L^p_{\text{loc}}(0,\infty)$ with $1 < p < \infty$ \cite{AlbaneseBonetRicker2015}, $C^\infty(0,\infty)$ \cite{AlbaneseBonetRicker2016,AlbaneseBonetRicker2016_Erratum}, and ultra-differentiable function spaces \cite{Albanese2021}. Recent studies have also examined $C_1$ in Lorentz spaces $L_{p,q}(0,1)$ for $1 < p < \infty$ and $1 \leq q \leq \infty$ \cite{TulenovZaur2023} and in rearrangement invariant spaces over a finite interval and a half line \cite{AkhymbekTulenovZaur2024}.

The discrete Cesàro-Hardy operator's spectrum and fine spectra have similarly been analyzed in various spaces, including $\ell_p$ $(1 < p < \infty)$ \cite{Gonzalez1985, CurberaRicker2013}, $c_0$ \cite{Reade1985, AkhmedovBasar2004}, $\ell_\infty$, Bachelis spaces $N_p$ $(1 < p < \infty)$ \cite{CurberaRicker2015}, $bv_0$, and $bv$ \cite{Okutoyi1990, Okutoyi1992}, as well as in weighted $\ell_p$ spaces \cite{AlbaneseBonetRicker2018, AlbaneseBonetRicker2015_Discrete}, discrete Cesàro spaces $\operatorname{ces}_p$ $(p \in \{0\} \cup (1, \infty))$ \cite{CurberaRicker2014}, and their dual spaces $d_s$ $(1 < s < \infty)$ \cite{BennettSharpley1988}.

Subsequently, in 2014, C. Lizama, P. J. Miana, R. Ponce and L. Sánchez-Lajusticia studied in \cite{LizamaMianaPonceSanchez-Lajusticia2014} a generalized Cesàro-Hardy operator. For $\alpha > 0$, they considered the Cesàro-Hardy operator of order $\alpha$ defined by
\[
    \Ca f (s) = \frac{\alpha}{s^\alpha} \int_0^s (s-u)^{\alpha-1} f(u) \, du, \quad s \in (0,\infty),
\]
on certain weighted Sobolev spaces $\mathscr{T}_p^\beta(t^\beta)$, where $p \geq 1$ and $\beta > 0$. The space $\mathscr{T}_p^\beta(t^\beta)$ consists of functions on the positive half-line $(0,\infty)$ such that $t \to t^\beta W^\beta_+f(t)$, where $W^\beta_+f$ is the Weyl fractional derivative of $f$ of order $\beta$, belongs to $L^p(0,\infty)$. Note that these spaces are contained within the Lebesgue spaces $L^p(0,\infty)$ (they coincide for $\beta=0$). The significance of this work lies not in the generalization of the operator itself, but in the approach employed. The introduction of $C_0$-semigroup theory to this problem offers a clearer understanding. Specifically, the approach involved representing the generalized Cesàro-Hardy operator as an operator subordinated via a $C_0$-group, allowing the spectral analysis of the (semi)group's generator to be applied directly to the operator itself. This work can be viewed as a natural continuation of Boyd's work \cite{Boyd1968}. Later, the discrete case was revisited within the $C_0$-semigroup framework in \cite{AbadiasMiana2018} extending Leibowitz's work \cite{Leibowitz72}. Therefore, the only case left was the study of this generalized Cesàro-Hardy operator on the unit interval using $C_0$-semigroup techniques, which is where this work starts. 

Let $\Real \alpha > 0$, consider the Cesàro-Hardy operator of order $\alpha$ given by
\[
    \Ca f (s) = \frac{\alpha}{s^\alpha} \int_0^s (s-u)^{\alpha-1} f(u) \, du, \quad s \in (0,1),
\]
and the dual Cesàro-Hardy operator of order $\alpha$ given by
\[
    \Ca^* f (s) = \alpha \int_s^1 \frac{(u-s)^{\alpha-1}}{u^\alpha} f(u) \, du, \quad s \in (0,1),
\]
which will be key to prove some spectral properties. Note that $\mathcal{C}_{1}=C_1.$ For $ \Ca,$ the change of variables $u = e^{-t}s$ yields
\[
    \Ca f (s) = \alpha \int_0^\infty (1-e^{-t})^{\alpha-1} e^{-t}f(e^{-t} s) \, dt,
\]
and for $\Ca^*$ the change of variable $u = e^{t}s$ gives
\[
    \Ca^* f (s) = \alpha \int_0^{\log(1/s)} (1-e^{-t})^{\alpha-1}f(e^t s) \, dt =  \alpha \int_0^\infty (1-e^{-t})^{\alpha-1} \chi_{(0,e^{-t})}(s) f(e^t s) \, dt,
\]
where $\chi_{I}(s)$ denotes the characteristic function of the interval $I$. The operator families, $(T(t))_{t \geq 0}$ and $(S(t))_{t \geq 0}$ given by
\begin{equation}
    \label{eq:semigroups}
    T(t)f(s) := e^{-t} f(e^{-t}s), \quad S(t)f(s) := \chi_{[0,e^{-t})}(s) f(e^t s), \quad s \in [0,1], \, t \geq 0,
\end{equation}
are $C_0$-semigroups on $L^p[0,1]$ for $1 \leq p < \infty$, see \Cref{th:semigroups}. Both semigroups are referred to in the literature as Koopman or composition semigroups. Next, with this representation in mind, we obtain the fine spectrum of both infinitesimal generators and transfer their spectral properties, first to the semigroup, using standard $C_0$-semigroup theory, see \Cref{th:fine_spectrum_T,th:fine_spectrum_S}. And later on, to the Cesàro-Hardy operator using the Hille-Phillips functional calculus and the functional calculus for sectorial operators developed by M. Haase in \cite{Haase2006}, see \Cref{th:spectrum_C*,th:spectrum_C}. This semigroup approach allows us to get, in a unified way, the spectrum of wide class of integral operators, see \Cref{sec:spectra_cesaro}.

As a consequence of the spectral properties we prove in \Cref{th:T_is_universal} that for $t>0$ and $\mu \in B(0,e^{-t/2})$, the open ball centered at 0 with radius $e^{-t/2}$, the operator $\mu - T(t)$ is universal on $L^2[0,1]$ in the sense of Rota, see \cite{Rota1959,Rota1960}. This notion of universality is intimately connected to the well-known Invariant Subspace Problem, a relationship that we will clarify in the following lines. In an infinite-dimensional separable complex Hilbert spaces $H$, a bounded linear operator $U$ is universal if for every operator $T$ in $\BB(H)$ (the algebra of bounded linear operators acting on $H$) there exist a scalar $\lambda$, a closed invariant subspace $M$ of $U$, and an isomorphism $\Theta: H \to M$ such that $\Theta T = \lambda U \mid_M \Theta$. Essentially, $T$ is similar to a scaled restriction of $U$ to one of its invariant subspaces. For a universal operator $U$, the following statements are equivalent:
\begin{itemize}
    \item[(i)] every bounded linear operator on an infinite-dimensional separable Hilbert space has a non-trivial closed invariant subspace.
    \item[(ii)] every minimal non-trivial invariant subspace of $U$ is one-dimensional.
\end{itemize}
S. R. Caradus \cite{Caradus1969} showed that if an operator $T$ in a separable Hilbert space has an infinite-dimensional kernel and is surjective, then $T$ is universal. We apply this result to prove the universality of the translations of $T(t)$ in \Cref{th:T_is_universal}.

Recently, J. Agler and J. E. McCarthy have characterized the invariant subspaces of $\mathcal{C}_1$ on $L^2[0,1].$ They prove that the lattice of non-zero invariant subspaces of $\mathcal{C}_1$ on $L^2[0,1]$ (denoted by $\lat \mathcal{C}_1$), is the set of Monomial spaces (see more details about this concept in \Cref{sec:universality_lattice}). We prove in \Cref{th:characterization_lattice} that the lattice of the closed invariant subspaces of the semigroup $(T(t))_{t> 0}$ coincides with the one of $\mathcal{C}_1$ on $L^2[0,1],$ that is, $\cap_{t>0} \lat T(t)=\lat \mathcal{C}_1.$

While we initially presented the work of \cite{LizamaMianaPonceSanchez-Lajusticia2014} as the motivation for addressing the problem discussed in this paper, it is important to acknowledge that the earliest known reference establishing the connection between $C_0$-semigroups and the Cesàro-Hardy operator is the article by C.C. Cowen \cite{Cowen1984}. Afterward, the technique was further employed by A. Arvanitidis and A. Siskakis \cite{ArvanitidisSiskakis2013}, who proved that the half-plane versions of Cesàro-Hardy operators on Hardy spaces are bounded. Similarly, W. Arendt and B. de Pagter \cite{ArendtDePagter2002} examined the Cesàro-Hardy operator defined in an interpolation space $E$ between $L^1$ and $L^\infty$ on the positive real half-line. Additionally, as we mentioned before, the generalized discrete Cesàro-Hardy operator was studied in \cite{AbadiasMiana2018}, and more recently in \cite{AbadiasOliva-Maza2024} the authors used the $C_0$-semigroup approach to determine the fine spectrum of generalized Hausdorff operators on various analytic spaces, including a generalized Cesàro-Hardy operator.

The paper is structured as follows. In \Cref{sec:semigroups}, we examine the properties of the one-parameter families $T(t)$ and $S(t)$. \Cref{sec:spectra_generators} discusses the spectral properties of their infinitesimal generators. This sets the stage for a detailed analysis of the spectrum of the semigroups and the generalized Cesàro-Hardy operator in \Cref{sec:spectra_semigroups} and \Cref{sec:spectra_cesaro}. In section \ref{sec:discrete}, we present a nice connection between the $C_0$-semigroups  $(T(t))_{t \geq 0}$ and $(S(t))_{t \geq 0}$ on $L^p[0,1]$ and the $C_0$-semigroups on $\ell^p$ introduced in \cite{AbadiasMiana2018} to study the discrete Cesàro operators. These connections allow us to show new intertwining properties between  Cesàro-Hardy operators $ \Ca, \Ca^*$ and the discrete generalized Cesàro operators on $\ell^p$. Finally, \Cref{sec:universality_lattice} addresses the universality of the translations of $T(t)$ and the Invariant Subspace Problem. Moreover, in this last section we give some results about the invariant subspaces of the semigroup $(T(t))_{t>0}$, and also about the cyclic subspaces of every universal operator $T(t).$

To conclude this introduction, we recall some classical definitions. Let $X$ be a Banach space, $\BB(X)$ will denote the Banach algebra of bounded linear operators on $X$ with the operator norm $\norm{\cdot}$. Let $1 \leq p \leq \infty$, the Lebesgue spaces $L^p[0,1]$ are Banach spaces with the $L^p$-norm denoted by $\norm{\cdot}_p$. For $1 \leq p < \infty$ the dual space of $L^p$ is $L^{p'}$, where $p'$ denotes the conjugate exponent of $p$. For $1 < p < \infty$, $p'$ satisfies $\frac{1}{p} + \frac{1}{p'} = 1$. If $p = 1$, then $p' = \infty$ and $\frac{1}{p'}$ is understood as $0$. Throughout the paper we will maintain this notation.

Lastly, we recall some classical definitions in spectral theory. For a closed linear operator $\mathcal{A}:D(\mathcal{A})\subseteq X \to X$ on a Banach space $X$, where $D(\mathcal{A})$ denotes the domain of the operator, we denote the spectrum by $\sigma(\mathcal{A})$, the point spectrum by $\sigma_{point}(\mathcal{A})$, the approximate point spectrum by 
\[
    \sigma_{ap}(\mathcal{A}) := \{ \lambda \in \C \, : \, (\lambda - \mathcal{A}) \text{ is not injective or } \ran(\lambda - \mathcal{A}) \text{ is not closed in } X \},
\]
the residual spectrum by
\[
    \sigma_r(\mathcal{A}) := \{ \lambda \in \C \, : \, \ran(\lambda - \mathcal{A}) \text{ is not dense in } X \},
\]
and the essential spectrum by 
\[
    \sigma_{ess}(\mathcal{A}) := \{ \lambda \in \C \, : \, \dim(\ker(\lambda - \mathcal{A})) = \infty \text{ or } \codim(\ran(\lambda - \mathcal{A})) = \infty \}
\]
where $\codim(Y) := \dim(X/Y)$ for any subspace $Y \subseteq X$. The spectral radius $r(\mathcal{A})$ of a bounded operator $\mathcal{A}$ is given by $r(\mathcal{A}) :=\sup\{\abs{\lambda} \, : \, \lambda \in \sigma(\mathcal{A})\}$.

\section{Weighted Koopman semigroups on \texorpdfstring{$L^p[0,1]$}{Lᵖ[0,1]}} \label{sec:semigroups}

In this section we study, for $1 \leq p \leq \infty,$ the $C_0$-semigroup structure of the two operator families $(T(t))_{t \geq 0}$ and $(S(t))_{t \geq 0}$ on the spaces $L^p[0,1]$ defined on \eqref{eq:semigroups}. Along with this study, several results about the integral operators, $\Lambda_0^{\lambda}, \Lambda_1^{\lambda}$ and $L^{\lambda}$ defined on \eqref{eq:integral_Lambda_operators} and \eqref{eq:integral_L_operator} are stated.

\begin{proposition}
    \label{pr:boundedness_semigroup}
    Let $1 \leq p \leq \infty$ and $t \geq 0$. Then:
    \begin{enumerate}[(i)]
        \item The operator $T(t)$ is bounded on $L^p[0,1]$ and $\norm{T(t)} = e^{-t/p'}$.
        \item The operator $S(t)$ is bounded on $L^p[0,1]$ and $\norm{S(t)} = e^{-t/p}$.
    \end{enumerate}
\end{proposition}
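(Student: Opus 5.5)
The plan is a direct change of variables to get the upper bounds, followed by explicit test functions showing the bounds are attained. The cases $1\le p<\infty$ and $p=\infty$ are handled separately, with the convention $1/p'=0$ for $p=1$ and $1/p=0$ for $p=\infty$.

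\textbf{Upper bound for $T(t)$.} For $1\le p<\infty$ and $f\in L^p[0,1]$ we compute
\[
\norm{T(t)f}_p^p=e^{-tp}\int_0^1 \abs{f(e^{-t}s)}^p\,ds = e^{-tp}e^{t}\int_0^{e^{-t}}\abs{f(u)}^p\,du \le e^{-t(p-1)}\norm{f}_p^p ,
\]
using the substitution $u=e^{-t}s$. This gives $\norm{T(t)}\le e^{-t(p-1)/p}=e^{-t/p'}$, which for $p=1$ reads $\norm{T(t)}\le 1$. For $p=\infty$ the bound $\norm{T(t)f}_\infty\le e^{-t}\norm{f}_\infty$ is immediate.

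\textbf{Upper bound for $S(t)$.} For $1\le p<\infty$ the substitution $u=e^{t}s$ gives
\[
\norm{S(t)f}_p^p=\int_0^{e^{-t}}\abs{f(e^ts)}^p\,ds=e^{-t}\int_0^1\abs{f(u)}^p\,du ,
\]
so $S(t)$ is $e^{-t/p}$ times an isometry and the norm equality holds at once. For $p=\infty$ we have $\norm{S(t)f}_\infty\le\norm{f}_\infty$, and $f\equiv1$ gives equality, since $S(t)1=\chi_{[0,e^{-t})}$ has sup norm $1$.

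\textbf{Sharpness for $T(t)$.} The only step needing care is equality for $T(t)$, because the inequality above discards $\int_{e^{-t}}^1\abs{f}^p$. Any $f$ supported in $[0,e^{-t}]$ makes that step an equality; for instance $f=\chi_{[0,e^{-t}]}$ gives $\norm{T(t)f}_p=e^{-t/p'}\norm{f}_p$. For $p=\infty$, take $f\equiv1$: then $T(t)f=e^{-t}$, so $\norm{T(t)}=e^{-t}=e^{-t/p'}$ with $p'=1$.

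All computations are justified for measurable $f$ by Tonelli and the change of variables for the affine maps $s\mapsto e^{\mp t}s$, so there is no genuine obstacle.
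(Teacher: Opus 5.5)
Your proposal is correct and follows essentially the same route as the paper. Both arguments use the substitution $u=e^{\mp t}s$ for the upper bounds, note that $S(t)$ is $e^{-t/p}$ times an isometry, and show sharpness for $T(t)$ with a function supported in $[0,e^{-t}]$; the only differences are your test function $f\equiv 1$ for $T(t)$ at $p=\infty$ and your correct exponent bookkeeping, since the paper's display drops the $p$-th powers.
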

\begin{proof}
    For $1 \leq p < \infty$ the operator $T(t)$ satisfies
    \[
        \norm{T(t)f}_p^p    = e^{-pt} \int_0^1 \abs{f(e^{-t}s)}^p\, ds 
                            = e^{(1-p)t}\int_0^{e^{-t}} \abs{f(u)} \, du 
                            \leq e^{(1-p)t} \norm{f}_p,
    \]
    and if $p = \infty$
    \[
        \norm{T(t)f}_\infty = \esssup_{s \in[0,1]} \abs{e^{-t} f(e^{-t}s)} \leq e^{-t} \norm{f}_\infty,
    \]
    so $\norm{T(t)} \leq e^{-t/p'}$ for $t \geq 0$. Moreover, $\norm{T(t)} = e^{-t/p'}$ since the equality holds for the characteristic function of the interval $[0,e^{-t})$ for $t \geq 0$. If $1 \leq p < \infty$ the operator $S(t)$ satisfies
    \[
        \norm{S(t)f}_p^p    = \int_0^{e^{-t}} \abs{f(e^{t}s)}^p\, ds 
                            = e^{-t} \int_0^{1} \abs{f(u)}^p \, du
                            = e^{-t} \norm{f}_p,
    \]
    and if $p = \infty$
    \[
        \norm{S(t)f}_\infty = \esssup_{s \in[0,e^{-t}]} \abs{f(e^t s)} = \norm{f}_\infty,
    \]
    so $\norm{S(t)} = e^{-t/p}$ with $t \geq 0$.
\end{proof}

\begin{theorem}
    \label{th:semigroups}
    Let $1 \leq p < \infty$. The one-parameter operator families $(T(t))_{t \geq 0}$ and $(S(t))_{t \geq 0}$ are $C_0$-semigroups on $L^p[0,1]$ with growth bounds $-1/p'$ and $-1/p$, respectively. Their infinitesimal generators, $A$ and $B$, are given by
    \[
        Af(s)=-sf'(s)-f(s), \quad \text{ and } \quad Bf(s) = sf'(s), \quad s \in [0,1],
    \]
    defined on the domains,
    \[
        D(A) := \{f \in L^p[0,1], f \in AC_{loc}(0,1) \text{ and } sf'(s) \in L^p[0,1]\},
    \]
    and
    \[
        D(B) := \{f \in L^p[0,1], f \in AC_{loc}(0,1)\text{, } \lim_{s \to 1^-} f(s) = 0 \text{ and } sf'(s) \in L^p[0,1]\},  
    \]
    where $AC_{loc}(0,1)$ is the space of locally absolutely continuous functions on $(0,1)$.
\end{theorem}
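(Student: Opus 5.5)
The proof splits into three parts: the semigroup and strong continuity properties, the growth bounds, and the identification of the generators.

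\emph{Semigroup law and strong continuity.} The semigroup law is a direct computation. For $T$,
\[
T(t)T(r)f(s)=e^{-t}e^{-r}f(e^{-r}e^{-t}s)=T(t+r)f(s).
\]
For $S$,
\[
S(t)S(r)f(s)=\chi_{[0,e^{-t})}(s)\chi_{[0,e^{-r})}(e^ts)f(e^{t+r}s)=\chi_{[0,e^{-t-r})}(s)f(e^{t+r}s).
\]
Both families are uniformly bounded on $[0,1]$ by \Cref{pr:boundedness_semigroup}. It therefore suffices to check $T(t)f\to f$ and $S(t)f\to f$ in $L^p$ on the dense subspace $C[0,1]$ (for $T$) or $C_c[0,1)$ (for $S$). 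For continuous $f$, uniform continuity gives $\sup_s\abs{e^{-t}f(e^{-t}s)-f(s)}\to0$. For $f\in C_c[0,1)$, the cutoff $\chi_{[0,e^{-t})}$ equals $1$ on the support of $f$ once $t$ is small, and again uniform continuity applies. Since $\norm{T(t)}=e^{-t/p'}$ and $\norm{S(t)}=e^{-t/p}$ exactly, the growth bounds are $\omega_0=\lim_{t\to\infty} t^{-1}\log\norm{T(t)}=-1/p'$ and $-1/p$ respectively.

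\emph{Generators.} Denote by $\tilde A$ and $\tilde B$ the operators on the right-hand side with the stated domains. Differentiating formally at $t=0$ gives $\frac{d}{dt}e^{-t}f(e^{-t}s)|_{t=0}=-f(s)-sf'(s)$ and $\frac{d}{dt}f(e^ts)=sf'(s)$. I would prove $A\subseteq\tilde A$ rather than the reverse, because it is the cleaner direction. Take $f\in D(A)$ and set $g=Af$. For every $\varphi\in C_c^\infty(0,1)$,
\[
\int g\varphi=\lim_{t\to 0}\frac1t\int (T(t)f-f)\varphi=\lim_{t\to 0}\int f\,\frac{T(t)'\varphi-\varphi}{t},
\]
where the pre-adjoint action is $T(t)'\varphi(u)=\varphi(e^tu)\chi_{[0,e^{-t})}(u)$, which is exactly the $S$ semigroup. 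Since $\varphi$ is smooth with compact support, this quotient converges uniformly to $u\varphi'(u)$. Hence $-(sf)'=g$ in the distributional sense on $(0,1)$, i.e. $-f-sf'=g$. This shows $sf$ is locally absolutely continuous, so $f\in AC_{loc}(0,1)$, and $sf'=-g-f\in L^p$. The same argument works for $B$, whose pre-adjoint is the $T$-type semigroup.

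The boundary condition $\lim_{s\to1^-}f(s)=0$ for $f\in D(B)$ needs a separate argument. The function $S(t)f$ vanishes on $[e^{-t},1]$, and the resolvent representation shows that every element of $D(B)$ has this limit. Concretely, solving $(\lambda-B)f=g$ gives
\[
f(s)=-\int_s^1 u^{\lambda-1}s^{-\lambda}\cdots\, du\, g(u),
\]
which visibly tends to $0$ as $s\to1$.

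\emph{Reverse inclusion and the main obstacle.} For the reverse inclusion I would use the standard fact that if $\tilde A\supseteq A$ and $\lambda-\tilde A$ is injective for some $\lambda$ in the resolvent set of $A$ (for instance $\lambda>\omega_0$), then $\tilde A=A$. Injectivity reduces to the ODE $-sf'-f=\lambda f$, whose solutions are $f=cs^{-1-\lambda}$. For real $\lambda$ large, this function is not in $L^p[0,1]$ unless $c=0$. For $B$, the equation $sf'=\lambda f$ gives $f=cs^{\lambda}$, and the boundary condition forces $c=0$. This is exactly why the condition $\lim_{s\to1^-}f=0$ is needed, and why $D(A)$ carries no boundary condition: $s^{-1-\lambda}$ is killed by integrability at $0$ instead.

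I expect the main obstacle to be the rigorous identification of $D(A)$ and $D(B)$. The delicate points are the distributional argument giving local absolute continuity, and showing that every $f\in D(B)$ has a limit at $1$, which follows from $sf'\in L^p$ near $1$ and hence $f'\in L^1$ near $1$, with the limit then forced to be $0$. For this last point I would fall back on the resolvent formula $R(\lambda,B)g=\int_0^\infty e^{-\lambda t}S(t)g\,dt$, which can be written explicitly as $\int_s^1(s/u)^{\lambda}g(u)\,du/u$. This form visibly tends to $0$ as $s\to1$ and lies in $AC_{loc}$.
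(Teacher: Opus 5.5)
Your argument is correct, and for the generators it takes a genuinely different route from the paper. The paper only says that the generator of $T$ can be ``computed directly''. For $S$ it imports the known generator $xf'(x)$ of the dilation semigroup $f\mapsto f(e^t\,\cdot)$ on $L^p(0,\infty)$ from Arendt--de Pagter. It then restricts this generator to the closed invariant subspace $Y$ of functions vanishing on $(1,\infty)$, using the subspace-semigroup result in Engel--Nagel, and transports it to $L^p[0,1]$ by the isomorphism $J$. In that route the condition $\lim_{s\to1^-}f(s)=0$ comes from requiring $f\in AC_{loc}(0,\infty)$ for a function that vanishes beyond $1$.

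You instead treat $A$ and $B$ uniformly and without external input. The duality of \Cref{remark:duality} turns the generator relation into a distributional identity tested on $C_c^\infty(0,1)$, which yields $A\subseteq\tilde A$ and $B\subseteq\tilde B$. The boundary condition for $B$ comes from $D(B)=R(\lambda,B)L^p[0,1]=\Lambda_1^\lambda L^p[0,1]$. The maximality argument, namely injectivity of $\lambda-\tilde A$ and $\lambda-\tilde B$ for some real $\lambda$ above the respective growth bound, then closes the inclusions.

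Your route supplies an actual proof for $A$, which the paper omits. It also makes transparent why $D(B)$ needs a boundary condition at $1$ while $D(A)$ needs none. The kernel of the maximal operator is $s^{\lambda}$ in one case and $s^{-1-\lambda}$ in the other: the first is removed by the boundary condition, the second by integrability at $0$. The paper's route is shorter on the page but rests entirely on the cited half-line result.

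Two small repairs are needed:
\begin{itemize}
\item Discard your first displayed formula for $f$, whose sign and exponents are wrong. Use the one you give at the end, $\int_s^1(s/u)^\lambda g(u)\,du/u=\Lambda_1^\lambda g(s)$; its pointwise identification with the Bochner integral is exactly the change of variables in \Cref{prop:integral_resolvent}.
\item In the distributional step, first deduce from $(sf)'=-g\in L^1_{loc}(0,1)$ that $sf$, and hence $f$, has an $AC_{loc}$ representative. Only then write $-f-sf'=g$.
\end{itemize}
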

\begin{proof}

    Firstly, note that both semigroups satisfy the semigroup property and are bounded operators for each $t \geq 0$ as shown in \Cref{pr:boundedness_semigroup}. For strong continuity, this condition is fulfilled due to the dominated convergence theorem. 
    
    Next, the properties stated for the generators are proven. Notice that for the semigroup $T(t)$ we can compute the generator directly obtaining $Af(s)=-sf'(s)-f(s)$ for a.e. $s \in [0,1]$ with domain
    \[
        D(A) := \{f \in L^p[0,1], f \in AC_{loc}(0,1) \text{ and } sf'(s) \in L^p[0,1]\}.
    \]
    For the semigroup $S(t)$ we will use the properties studied in \cite[Remark 2.3]{ArendtDePagter2002} for the semigroup $\SS(t)f(x) = f(e^t s)$ on $L^p(0,\infty)$ whose infinitesimal generator is the differential operator $\BB f(s) = sf'(s)$ with domain 
    \[
        D(\BB) = \{f \in L^p(0,\infty) \, :\, f \in AC_{loc}(0,\infty) \text{ and } xf'(x) \in L^p(0,\infty) \}.
    \] 
    Now we define the space $Y$ as 
    \[
        Y = \{f \in L^p(0,\infty): f(x) = 0 \text{ a.e. for } x > 1 \}.
    \]
    This space is closed given that convergence in $L^p$ implies convergence pointwise almost everywhere
    for a subsequence, and we have that $\SS(t) Y \subset Y$ for $t \geq 0$ so $Y$ is $(\SS(t))_{t \geq 0}$-invariant.
    Indeed, let $f \in Y$, we have that
    $$
        \SS(t)f(x) =  
            \begin{cases} 
                f(xe^t),     &   0 \leq xe^t \leq 1 \\
                0 ,          &   xe^t >1 
            \end{cases} 
            = \chi_{[0,e^-t]}(x) f(xe^t) 
    $$
    which clearly belongs to $Y$. Therefore, by \cite[Paragraph I.5.11 and Corollary I.2.3]{EngelNagel2000} we have that the family $ (\SS(t)_{|\,Y})_{t>0}$ on $Y$ with pointwise definition $\SS(t)_{|\,Y} f(x) = \chi_{[0,e^-t]}(x) f(xe^t)$ with $t\geq 0$, $x >0 $ and $f \in Y$, is a strongly continuous semigroup whose infinitesimal generator $\BB_Y$ is given by $(\BB_Y f)(x) := (\BB f)(x) = xf'(x)$ a.e. $x \in (0,\infty)$ with domain
    \[
        \begin{aligned}
            D(\BB_Y)& := \{f \in D(\BB) \cap Y : \BB f \in Y\} \\
                    & = \{f \in Y \, :\, f \in AC_{loc}(0,\infty) \text{ and } xf'(x) \in Y \} \\
                    & = \{f \in Y \, :\, f \in AC_{loc}(0,1)\text {, } \lim_{x \to 1} f(x) = 0 \text{ and } x f'(x) \in Y\},
        \end{aligned}
    \]
    where the last equality follows from the definition of local absolute continuity and the definition of $Y$.

    Finally, we recover the space $L^p[0,1]$ via an isomorphism. Consider the map $J: L^p[0,1] \rightarrow Y$ defined by $(Jf)(x) = \chi_{[0,1]}(x) f(x)$ for $x > 0$, and its inverse $J^{-1}: Y \rightarrow L^p[0,1]$ defined by $(J^{-1}f)(s) = f(s)$ for $s \in [0,1]$. Then, the family $(S(t))_{t > 0}$ on $L^p[0,1]$ is a similar semigroup to $(\SS(t)_{|\,Y})_{t > 0}$ via the $J$ isomorphism, i.e., $S(t) = J^{-1} \SS(t)_{|\,Y} J$ for $t \geq 0$. This is a strongly continuous semigroup whose infinitesimal generator $B$ is given by
    \[
        Bf(s) := (J^{-1} \BB_Y J)(s) = sf'(s) \quad \text{a.e. } s \in [0,1]
    \]
    with domain
    \[
        \begin{aligned}
            D(B) & := \{f \in L^p[0,1] : Jf \in D(\BB_Y)\} \\ & = \{f \in L^p[0,1] : f \in AC_{\text{loc}}(0,1), \lim_{s \to 1^{-}} f(s) = 0, \text{ and } sf'(s) \in L^p[0,1]\}. 
        \end{aligned}
    \]
    as we wanted to prove.
\end{proof}

\begin{remark}
    \label{remark:duality}
    Observe that $S(t)$ on $L^{p'}[0,1]$ is the dual operator of $T(t)$ on $L^p[0,1]$ for $1 \leq p <\infty$,
    \[
        \begin{aligned}
            \int_0^1 T(t) f(s) g(s) \, ds & = \int_0^1 e^{-t} f(e^{-t}s)g(s) \, ds = \int_0^1 f(u) \chi_{[0,e^{-t})}(u) g(e^{t}u) \, du \\ & = \int_0^1 f(u) (S(t)g)(u)\,du.
        \end{aligned}
    \]
\end{remark}    

\begin{remark}
    Notice that the semigroups $(T(t))_{t\geq 0}$ and $(S(t))_{t\geq 0}$ are not strongly continuous on $L^\infty[0,1]$ as they are not uniformly continuous, see \cite[Theorem 4]{Lotz1985}.
\end{remark}    

In the following, we introduce three integral operators which play a key role in the study of the spectral properties. Let $\lambda\in\C$ and $f$ a measurable function on $[0,1],$ we write 
\begin{equation}
    \label{eq:integral_Lambda_operators}
    (\Lambda_0^{\lambda} f)(s)=\frac{1}{s^{\lambda+1}}\int_0^s u^{\lambda} f(u)\,du,\quad (\Lambda_1^{\lambda} f)(s)=s^{\lambda}\int_s^1 \frac{f(u)}{u^{\lambda+1}}\,du,\quad s\in (0,1),
\end{equation}
and 
\begin{equation}
    \label{eq:integral_L_operator}
    L^{\lambda}f=\int_0^1 \frac{f(u)}{u^{\lambda+1}}\,du.
\end{equation}

\begin{proposition}
    \label{prop:integral_resolvent}
    Let $1\leq p<\infty$ and $f\in L^p[0,1],$ then
    \begin{enumerate}[(i)]
        \item $\displaystyle\Lambda_0^{\lambda}f=\int_{0}^{\infty}e^{-\lambda t}T(t)f\,dt$ in $L^p[0,1]$ for $\Real \lambda>-1/p'.$
        \item $\displaystyle\Lambda_1^{\lambda}f=\int_{0}^{\infty}e^{-\lambda t}S(t)f\,dt$ in $L^p[0,1]$ for $\Real \lambda>-1/p$.
    \end{enumerate}
\end{proposition}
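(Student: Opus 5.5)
We use \Cref{pr:boundedness_semigroup}, which gives $\norm{T(t)}=e^{-t/p'}$ and $\norm{S(t)}=e^{-t/p}$. In the stated half-planes the map $t\mapsto e^{-\lambda t}T(t)f$ (respectively $e^{-\lambda t}S(t)f$) is continuous into $L^p[0,1]$ by \Cref{th:semigroups}. Its norm is bounded by $e^{-(\Real\lambda+1/p')t}\norm{f}_p$ (respectively $e^{-(\Real\lambda+1/p)t}\norm{f}_p$), so the integrals converge absolutely as Bochner integrals in $L^p[0,1]$. It remains to identify each Bochner integral with the pointwise integral operator.

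The identification goes as follows. A Bochner integral in $L^p[0,1]$ of a jointly measurable integrand is represented a.e.\ by the pointwise integral $s\mapsto\int_0^\infty e^{-\lambda t}(T(t)f)(s)\,dt$. This holds as soon as this pointwise integral converges absolutely for a.e.\ $s$ and the resulting function lies in $L^p$. To check it, pair the Bochner integral with an arbitrary $g\in L^{p'}[0,1]$ and use Fubini; the justification is the bound $\int_0^\infty\int_0^1 e^{-\Real\lambda t}|T(t)f(s)||g(s)|\,ds\,dt\le \norm{f}_p\norm{g}_{p'}\int_0^\infty e^{-(\Real\lambda+1/p')t}dt<\infty$. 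This yields $\langle \int_0^\infty e^{-\lambda t}T(t)f\,dt, g\rangle=\int_0^1 g(s)\int_0^\infty e^{-\lambda t}e^{-t}f(e^{-t}s)\,dt\,ds$. The same Fubini bound shows the inner $t$-integral converges absolutely for a.e.\ $s$ (taking $g=\chi_E$, $E$ any finite-measure set, suffices).

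Next compute the inner integral pointwise with the change of variables already used for $\Ca$ in the introduction. For (i), put $u=e^{-t}s$, so $dt=-du/u$ and $e^{-(\lambda+1)t}=(u/s)^{\lambda+1}$. Then
\[
\int_0^\infty e^{-(\lambda+1)t}f(e^{-t}s)\,dt=\int_0^s \frac{u^{\lambda+1}}{s^{\lambda+1}}f(u)\frac{du}{u}=\frac{1}{s^{\lambda+1}}\int_0^s u^{\lambda}f(u)\,du=(\Lambda_0^\lambda f)(s).
\]
For (ii), put $u=e^{t}s$. The indicator $\chi_{[0,e^{-t})}(s)$ restricts to $t<\log(1/s)$, i.e.\ $u<1$, and $e^{-\lambda t}=(s/u)^\lambda$. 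Then
\[
\int_0^{\log(1/s)}e^{-\lambda t}f(e^ts)\,dt=\int_s^1 \frac{s^\lambda}{u^\lambda}f(u)\frac{du}{u}=(\Lambda_1^\lambda f)(s).
\]
Since both sides agree when paired with every $g\in L^{p'}$, they coincide as elements of $L^p[0,1]$. In particular, $\Lambda_0^\lambda f,\Lambda_1^\lambda f\in L^p$, with norms at most $\norm{f}_p/(\Real\lambda+1/p')$ and $\norm{f}_p/(\Real\lambda+1/p)$ respectively.

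The main obstacle is the Fubini/identification step: making rigorous that the vector-valued integral equals the pointwise one, and that the pointwise integrals converge a.e. The duality pairing handles this, since the growth bounds give the absolute integrability needed. The case $p=1$ ($p'=\infty$, condition $\Real\lambda>0$ for (i)) is covered by the same argument with $g\in L^\infty$.
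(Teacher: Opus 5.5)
Your proposal is correct and follows essentially the same route as the paper: Bochner convergence from the growth bounds $\norm{T(t)}=e^{-t/p'}$ and $\norm{S(t)}=e^{-t/p}$, then the substitutions $u=e^{-t}s$ and $u=e^{t}s$ to identify the integrands pointwise with $\Lambda_0^\lambda f$ and $\Lambda_1^\lambda f$. The one difference is that you justify, by pairing with $g\in L^{p'}[0,1]$ and using Fubini, that the Bochner integral is represented a.e.\ by the pointwise $t$-integral, which the paper leaves implicit; the paper instead uses H\"older's inequality to check that the defining integrals of $\Lambda_0^\lambda f$ and $\Lambda_1^\lambda f$ converge for every $s\in(0,1)$.
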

\begin{proof}
    Let $f\in L^p[0,1]$, by Hölder's inequality one gets 
    \[
        \int_0^s |u^{\lambda} f(u)|\,du\leq \|f\|_p\biggl(\int_0^s u^{p'\Real \lambda} \, du\biggr)^{1/p'},\quad s\in (0,1),
    \]
    which converges for $\Real\lambda>-1/p',$ and then $\Lambda_0^{\lambda}f$ is a measurable function. Also, 
    \[
        \int_s^1 \abs{\frac{f(u)}{u^{\lambda+1}}}\,du\leq  \|f\|_p\biggl(\int_s^1 \frac{1}{u^{p'(\Real \lambda+1)}}\,du \biggr)^{1/p'}, \quad s\in (0,1),
    \]
    which always converges, so $\Lambda_1^{\lambda}f$ is a measurable function. Notice that for the case $p=1$ the last integral on each inequality has to be understood as the $L^\infty$ norm.

    By the growth bounds of the semigroups $(T(t))_{t\geq 0}$ and $(S(t))_{t\geq 0}$ given in Theorem \ref{th:semigroups}, the right-side hand integrals in the statements of this proposition are Bochner convergent in $L^p[0,1].$ 

    Finally, for $s\in(0,1),$ the change of variable $u=e^{-t}s$ gives
    \[
        (\Lambda_0^{\lambda}f)(s)=\int_{0}^{\infty}e^{-\lambda t}(T(t)f)(s)\,dt,
    \]
    and the change of variable $u=e^{t}s$ implies
    \[
        (\Lambda_1^{\lambda}f)(s)=\int_{0}^{\infty}e^{-\lambda t}(S(t)f)(s)\,dt. \qedhere
    \]
\end{proof}

Now, we turn our attention to the functional $L^\lambda.$ Depending on the values of $p$ and $\lambda$ the domain will change and also the behavior of our functional. This is shown in the following results.

\begin{lemma}
    \label{lemma:L_lambda_bounded}
    Let $1 \leq p < \infty$ and $\lambda \in \C$ with $\Real \lambda < -1/p.$ 
    Then $L^{\lambda}: L^p[0,1] \to \C$ is a continuous linear functional on $L^p[0,1].$ 
    Additionally, for $p=1$ and $\Real \lambda = -1$, $L^{\lambda}$ is also continuous.
\end{lemma}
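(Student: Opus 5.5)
The plan is to apply Hölder's inequality to the pairing of $f$ with the kernel $u^{-\lambda-1}$. Linearity of $L^\lambda$ is clear once the integral is shown to converge absolutely, so everything reduces to checking that $g_\lambda(u):=u^{-(\lambda+1)}$ lies in the dual space $L^{p'}[0,1]$. Then Hölder gives
\[
    \abs{L^\lambda f}\leq \int_0^1 \abs{f(u)}\,u^{-(\Real\lambda+1)}\,du \leq \norm{f}_p\,\norm{g_\lambda}_{p'},
\]
which yields both absolute convergence and continuity, with $\norm{L^\lambda}\le\norm{g_\lambda}_{p'}$. Here we use $\abs{u^{-\lambda-1}}=u^{-(\Real\lambda+1)}$ for $u\in(0,1)$.

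For $1<p<\infty$ I compute
\[
    \norm{g_\lambda}_{p'}^{p'}=\int_0^1 u^{-p'(\Real\lambda+1)}\,du .
\]
This is finite precisely when $p'(\Real\lambda+1)<1$, that is, $\Real\lambda<1/p'-1=-1/p$, which is exactly the hypothesis. Its value is $\bigl(1-p'(\Real\lambda+1)\bigr)^{-1}$.

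For $p=1$, we have $p'=\infty$ and $1/p=1$, so the hypothesis reads $\Real\lambda<-1$. Then the exponent $-(\Real\lambda+1)$ is positive, so $u^{-(\Real\lambda+1)}\le 1$ on $[0,1]$ and $\norm{g_\lambda}_\infty\le1$. In the boundary case $\Real\lambda=-1$ the modulus is identically $1$, so again $\norm{g_\lambda}_\infty=1$ and $\abs{L^\lambda f}\le\norm{f}_1$. This proves the additional claim.

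There is no real obstacle. The only delicate point is treating the $p=1$ endpoint separately, since the integrability criterion for $p'<\infty$ does not apply there. That separate treatment is also what explains why $\Real\lambda=-1$ is admissible only when $p=1$.
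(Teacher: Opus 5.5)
Your proposal is correct and is essentially the paper's proof: Hölder's inequality against the kernel $u^{-(\lambda+1)}$, whose $L^{p'}$-norm is finite exactly when $\Real\lambda<-1/p$, with the case $p=1$ (including $\Real\lambda=-1$) handled through the $L^\infty$ bound on the kernel. One small wording slip in your closing remark: it is the endpoint $\Real\lambda=-1/p$, not $\Real\lambda=-1$, that is admissible only for $p=1$, since for $p>1$ the value $\Real\lambda=-1$ already satisfies $\Real\lambda<-1/p$.
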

\begin{proof}
    Let $f\in L^p[0,1]$ and $\lambda\in\C$ such that $\Real \lambda <-1/p.$ Then by Hölder's
    inequality one gets 
    \[
        \abs{\int_0^1\frac{f(u)}{u^{\lambda+1}}\,du} \leq \norm{f}_p \left(\int_0^1\frac{1}{u^{p'(\Real \lambda +1)}} \right)^{1/p'},
    \]
    which is convergent, therefore $L^{\lambda}$ is a well-defined continuous linear functional on $L^p[0,1].$ Notice that for the case $p=1$ the last integral has to be understood as the $L^\infty$ norm and therefore for $\Real \lambda = -1$ we also have
    that $L^{\lambda}$ is a well-defined continuous linear functional on $L^1[0,1].$ 
\end{proof}

The last result leaves unexplored the case when $\Real \lambda = -1/p$. In particular,
notice that $L^\lambda$ might not even be well-defined when $p \neq 1$. Nonetheless,
we can define the functional on a dense subspace of $L^p[0,1]$, $C_c^\infty(0,1)$ the space of infinite differentiable functions of compact support on $(0,1)$. In that case, see that the functional is unbounded.

\begin{lemma}
    \label{lemma:L_lambda_unbounded}
    Let $1 < p < \infty$ and $\lambda \in \C$ with $\Real \lambda = -1/p.$ Then $L^{\lambda}: C_c^\infty(0,1) \to \C$ is an unbounded linear functional with the norm $\|\cdot\|_p.$  \end{lemma}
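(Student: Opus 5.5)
Write $\lambda = -1/p + i\tau$ with $\tau\in\R$. For $f\in C_c^\infty(0,1)$ we then have
\[
L^\lambda f=\int_0^1 f(u)\,u^{-1/p'}u^{-i\tau}\,du .
\]
This integral converges because $f$ vanishes near $0$, so $L^\lambda$ is a well-defined linear functional on $C_c^\infty(0,1)$. To show it is unbounded for $\|\cdot\|_p$, I will produce a sequence $f_n\in C_c^\infty(0,1)$ with $\|f_n\|_p$ bounded and $|L^\lambda f_n|\to\infty$. Equivalently, I will show that the ratio $|L^\lambda f_n|/\|f_n\|_p$ is unbounded.

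The natural candidates are smooth approximations of the extremal functions for Hölder's inequality against the weight $g(u)=u^{-1/p'-i\tau}$, which lies in $L^{p'}$ of every interval $[\e,1]$. The key fact is that $g\notin L^{p'}[0,1]$, since
\[
|g|^{p'}=u^{-1}\quad\text{and}\quad \int_\e^1 u^{-1}\,du=\log(1/\e)\to\infty .
\]
Concretely, for $0<\e<1/2$ I take
\[
h_\e(u)=\chi_{[\e,1/2]}(u)\,u^{-1/p}\,u^{i\tau}.
\]
Then $h_\e(u)g(u)=u^{-1}$, because $u^{-1/p}u^{-1/p'}=u^{-1}$ and the oscillating factors $u^{i\tau}u^{-i\tau}$ cancel. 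Hence
\[
L^\lambda h_\e=\log\frac{1}{2\e}, \qquad \|h_\e\|_p^p=\int_\e^{1/2}u^{-1}\,du=\log\frac{1}{2\e}.
\]
The ratio is therefore $(\log\frac1{2\e})^{1/p'}$, which tends to $\infty$ as $\e\to0$ because $p>1$.

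The remaining step, which is the only technical point, is to replace $h_\e$ by genuine $C_c^\infty(0,1)$ functions. Since $h_\e$ is supported in $[\e,1/2]$, where $g$ is bounded, the map $\varphi\mapsto\int \varphi g$ is continuous with respect to the $L^p[\e/2,3/4]$ norm. I will therefore mollify $h_\e$ to get $\varphi\in C_c^\infty(\e/2,3/4)$ close to $h_\e$ in $L^p$. Both $\|\varphi\|_p$ and $L^\lambda\varphi$ are then within $1$ of the corresponding values for $h_\e$. Alternatively, one can take smooth cutoffs $0\le\psi\le1$ equal to $1$ on $[2\e,1/4]$ and supported in $[\e,1/2]$, and set $\varphi=\psi\, u^{-1/p+i\tau}$. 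The transition regions then contribute only $O(1)$ to both quantities, since $\int_\e^{2\e}u^{-1}\,du=\log 2$. In either version the quotient $|L^\lambda\varphi|/\|\varphi\|_p$ grows like $(\log(1/\e))^{1/p'}$, so $L^\lambda$ cannot be bounded.

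I expect no real obstacle beyond keeping track of the $O(1)$ errors in this approximation step. It is worth noting why the statement assumes $p>1$: when $p=1$ we have $1/p'=0$, the ratio stays bounded, and $L^\lambda$ is indeed continuous by Lemma \ref{lemma:L_lambda_bounded}.
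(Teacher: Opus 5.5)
Your proof is correct and follows essentially the same route as the paper: you test $L^\lambda$ against truncations of $u^\lambda$ to an interval $[\e,1/2]$. The paper uses $\chi_{[1/n,1/2]}(u)\,u^\lambda\log(1/u)^{-1/p}$, which gives a ratio of order $p'(\log n)^{1/p'}/(\log\log n)^{1/p}$; your version without the logarithmic factor is simpler and gives the cleaner ratio $(\log\frac{1}{2\e})^{1/p'}$. Your smoothing step is a real improvement, since the paper's test functions are not actually in $C_c^\infty(0,1)$ and the paper leaves that approximation implicit.
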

\begin{proof}
    First, notice that $L^\lambda$ is well-defined for all the functions on $C_c^\infty(0,1)$ as they have compact support. To check that it's unbounded we define the sequence
    \[
        f_n(u) = \chi_{[\frac{1}{n},\frac{1}{2}]}(u) \frac{u^\lambda}{\log(1/u)^\frac{1}{p}}, \quad u\in[0,1], \; n>2.
    \]
    On one hand, we have that,
    \[
        L^\lambda f_n = \int_\frac{1}{n}^\frac{1}{2} \frac{du}{u \log(1/u)^\frac{1}{p}}
                      = \int_{\log 2}^{\log n} s^{-\frac{1}{p}}\, ds 
                      = p' \left( (\log n)^\frac{1}{p'}- (\log 2)^\frac{1}{p'}\right).
    \]
    On the other hand,
    \[
        \norm{f_n}_p^p  = \int_\frac{1}{n}^\frac{1}{2} \abs{\frac{u^\lambda}{\log(1/u)^\frac{1}{p}}}^p \, du 
                        = \int_\frac{1}{n}^\frac{1}{2} \frac{du}{u \log(1/u)}
                        = \int_{\log 2}^{\log n} \frac{1}{s} \, ds
                        = \log (\log n )  -  \log (\log 2 ).
    \]
    Therefore,
    \[
        \lim_{n \to \infty} \frac{\abs{L^\lambda f_n}}{\norm{f_n}_p} 
            = \lim_{n \to \infty} \frac{p' (\log n)^\frac{1}{p'}}{\log (\log n )^\frac{1}{p}}
            = +\infty,
    \]
    So, the functional $L^\lambda$ is unbounded.
\end{proof}

\begin{remark}
    \label{remark:ker_functional}
    Note that for $1\leq p<\infty$ and $\Real \lambda \leq -1/p,$ the kernel of the functional $L^{\lambda}$ on $C_c^\infty(0,1)$ is not the whole domain as it's not the zero functional.
\end{remark}

\begin{corollary}
    \label{kerdensity}
    For $1 < p <\infty$ and $\Real \lambda = -1/p,$ the kernel of the functional $L^{\lambda}$ on $C_c^\infty(0,1)$ is dense on $L^p[0,1].$\end{corollary}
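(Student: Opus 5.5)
The argument is the standard fact that the kernel of an unbounded linear functional on a normed space is dense. We apply it on the normed space $(C_c^\infty(0,1),\norm{\cdot}_p)$ and then use the density of $C_c^\infty(0,1)$ in $L^p[0,1]$, valid for $1<p<\infty$.

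Write $N:=\ker L^\lambda\cap C_c^\infty(0,1)$. By \Cref{lemma:L_lambda_unbounded}, $L^\lambda$ is unbounded on $C_c^\infty(0,1)$ with respect to $\norm{\cdot}_p$. Since the sequence $f_n$ used there is not smooth, we first check that unboundedness really holds on $C_c^\infty(0,1)$. Each $f_n$ is bounded with support in $[1/n,1/2]$. Mollifying it gives $g\in C_c^\infty(0,1)$ with support in $[1/(2n),3/4]$ and $\norm{g-f_n}_p$ arbitrarily small. On this fixed compact set the weight $u^{-\lambda-1}$ is bounded, so $|L^\lambda g-L^\lambda f_n|$ is also arbitrarily small. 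Hence the ratio $\abs{L^\lambda g}/\norm{g}_p$ can be made arbitrarily large inside $C_c^\infty(0,1)$.

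Next we show $N$ is $\norm{\cdot}_p$-dense in $C_c^\infty(0,1)$. Fix $h\in C_c^\infty(0,1)$ and $\e>0$. By unboundedness, choose $g\in C_c^\infty(0,1)$ with $\norm{g}_p<\e$ and $\abs{L^\lambda g}>\abs{L^\lambda h}$; in particular $L^\lambda g\neq 0$. Set
\[
h_\e := h-\frac{L^\lambda h}{L^\lambda g}\,g .
\]
Then $h_\e\in C_c^\infty(0,1)$, $L^\lambda h_\e=0$, and $\norm{h-h_\e}_p\le \norm{g}_p<\e$. So $h$ lies in the closure of $N$.

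Finally, $C_c^\infty(0,1)$ is dense in $L^p[0,1]$ for $1\le p<\infty$. Combined with the previous step, this shows that $N$ is dense in $L^p[0,1]$.

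The only delicate point is the regularization step in the second paragraph, because \Cref{lemma:L_lambda_unbounded} exhibits unboundedness only through non-smooth test functions. It is handled by convolution, using that on compact subsets of $(0,1)$ the functional $L^\lambda$ is continuous with respect to $\norm{\cdot}_p$.
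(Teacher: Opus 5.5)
Your proof is correct and follows the same route as the paper, which cites the fact that a nonzero linear functional is continuous if and only if its kernel is not dense (Rudin, Theorem 1.18) and then uses the density of $C_c^\infty(0,1)$ in $L^p[0,1]$; you instead prove that fact directly via the correction $h-\frac{L^\lambda h}{L^\lambda g}\,g$. Your mollification step is a worthwhile addition, because the test functions $f_n$ in \Cref{lemma:L_lambda_unbounded} are not actually in $C_c^\infty(0,1)$, a point the paper passes over.
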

\begin{proof}
    Recall that a non-zero linear functional on a topological vector space $X$, which is the case for $L_\lambda$ on $C_c^\infty(0,1)$ with the norm $\|\cdot\|_p$, is continuous if and only if the kernel is not dense in $X$, see \cite[Theorem 1.18]{Rudin1991}. So, the result follows by \Cref{lemma:L_lambda_unbounded} and the density of $C_c^\infty(0,1)$ in $L^p[0,1]$ with the norm $\|\cdot\|_p.$
\end{proof}

\section{Spectra of differential operators} 
\label{sec:spectra_generators}

In this section we present the fine spectrum of the infinitesimal generators $A$ and $B$ on $L^p[0,1].$ 
\begin{proposition}
    \label{prop:point_spectrum}
    Let $1\leq p<\infty.$ The point spectrum of $B$ is empty, that is, $\sigma_{point}(B)=\emptyset.$ The point spectrum of $A$ is the set $\sigma_{point}(A)=\{\lambda\in\C\,:\, \Real\lambda<-1/p'\}.$ Furthermore, the eigenspace of each $\lambda\in\sigma_{point}(A) $ is one-dimensional and generated by $g_{\lambda}(s):=\frac{1}{s^{\lambda+1}}.$
\end{proposition}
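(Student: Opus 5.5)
The plan is to solve the eigenvalue equations explicitly as first-order ODEs on $(0,1)$, using the descriptions of $D(A)$ and $D(B)$ from \Cref{th:semigroups}. Elements of either domain lie in $AC_{loc}(0,1)$. So an eigenfunction $f$ for eigenvalue $\lambda$ satisfies the linear ODE pointwise a.e. on $(0,1)$, and by local absolute continuity it is determined, up to a multiplicative constant, by the classical solution on every compact subinterval $[a,b]\subset(0,1)$.

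For $B$, suppose $f\in D(B)$ and $Bf=\lambda f$, that is, $sf'(s)=\lambda f(s)$ a.e. Then $(s^{-\lambda}f(s))'=s^{-\lambda}(f'(s)-\lambda f(s)/s)=0$ a.e. Since $s^{-\lambda}f$ is locally absolutely continuous on $(0,1)$, it is constant there, so $f(s)=c\,s^{\lambda}$. The domain condition $\lim_{s\to1^-}f(s)=0$ then forces $c=0$, because $s^\lambda\to 1$. Hence $\ker(\lambda-B)=\{0\}$ for every $\lambda\in\C$, and $\sigma_{point}(B)=\emptyset$.

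For $A$, the equation $-sf'(s)-f(s)=\lambda f(s)$ reads $sf'(s)=-(\lambda+1)f(s)$. The same integrating-factor argument, now with $(s^{\lambda+1}f)'=0$, shows that every solution in $AC_{loc}(0,1)$ is $f=c\,g_\lambda$ with $g_\lambda(s)=s^{-(\lambda+1)}$. This already gives that each eigenspace has dimension at most one. It remains to check exactly when $g_\lambda\in D(A)$:
\begin{itemize}
\item $g_\lambda$ is smooth on $(0,1)$, so it lies in $AC_{loc}(0,1)$.
\item $s g_\lambda'(s)=-(\lambda+1)g_\lambda(s)$, so the condition $sg_\lambda'\in L^p$ reduces to $g_\lambda\in L^p$ when $\lambda\neq-1$, and holds trivially when $\lambda=-1$.
\item Membership $g_\lambda\in L^p[0,1]$ means $\int_0^1 s^{-p(\Real\lambda+1)}\,ds<\infty$. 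This holds if and only if $p(\Real\lambda+1)<1$, i.e. $\Real\lambda<1/p-1=-1/p'$.
\end{itemize}
The only point needing care is $p=1$, where $-1/p'=0$. Here the condition is $\Real\lambda+1<1$, i.e. $\Real\lambda<0$, which agrees with the stated set. For $\lambda=-1$ we have $g_{-1}\equiv1$, which is in $L^p$ and satisfies $Ag_{-1}=0=-g_{-1}+0$. This is consistent, since $-1<-1/p'$ for every $p$.

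Combining these, $\sigma_{point}(A)=\{\Real\lambda<-1/p'\}$, with each eigenspace spanned by $g_\lambda$. There is no real obstacle in this argument. The one step needing justification is that an a.e. solution of the ODE within $AC_{loc}$ really is the classical solution. This follows because $s^{\lambda+1}f$ (respectively $s^{-\lambda}f$) is locally absolutely continuous with zero a.e. derivative, and hence is constant on $(0,1)$.
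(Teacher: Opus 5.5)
Your proof is correct and follows the paper's argument: you solve the eigenvalue ODEs explicitly, use the boundary condition at $1$ to exclude $cs^{\lambda}$ for $B$, and note that $g_\lambda\in L^p$ if and only if $\Real\lambda<-1/p'$ for $A$, while making explicit the integrating-factor justification in $AC_{loc}(0,1)$ and the check that $sg_\lambda'\in L^p$, both of which the paper leaves implicit. One harmless slip: $Ag_{-1}=-g_{-1}$ rather than $0$ (it is $sg_{-1}'$ that vanishes).
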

\begin{proof}
    Let $\lambda \in \C$ and $f \in D(B)$ such that $Bf = \lambda f$. Then, $f$ is a solution of the differential equation
    \[
        sf'(s)-\lambda f(s) = 0.
    \]
    The nonzero solutions to this equation have the form $f(s) = c s^{\lambda}$ with $c \in\C$ and $c\neq 0,$ but these functions are not in $D(B)$ because $\lim_{s \to 1^-} f(s) \neq 0.$ Therefore, $\sigma_{point}(B)=\emptyset.$

    If $\lambda \in \C$ and $f \in D(A)$ such that $Af = \lambda f$, then $f$ satisfies
    \[
        (\lambda + 1)f(s) + sf'(s) = 0.
    \]
    The solutions are of the form $f(s) = c g_{\lambda}(s)$, where $c$ is a complex constant. These solutions belong to $L^p[0,1]$ if and only if $c = 0$ or $\Real \lambda < -1/p'$. Therefore, $\sigma_{point}(A) = \{\lambda \in \C : \Real \lambda < -1/p'\}$, and the associated eigenspace is the one-dimensional subspace generated by $g_{\lambda}$.
\end{proof}

\begin{proposition}
    \label{prop:spectrum_A} 
    Let $1\leq p<\infty.$ The  spectrum of $A$ is the set $\sigma(A)=\{\lambda\in\C\,:\, \Real\lambda\leq -1/p'\}.$ Moreover, for $\lambda\in\C$ with $\Real\lambda>-1/p',$ $$(\lambda-A)^{-1}f=\Lambda_0^{\lambda}f,\quad f\in L^p[0,1].$$
\end{proposition}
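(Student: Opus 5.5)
The proof splits into the two inclusions of $\sigma(A)$. For the resolvent set we use the growth bound $-1/p'$ of $(T(t))_{t\geq 0}$ from \Cref{th:semigroups}. By the standard Laplace transform representation of the resolvent of a $C_0$-semigroup generator (Engel--Nagel, Theorem II.1.10), every $\lambda$ with $\Real\lambda>-1/p'$ lies in $\rho(A)$, and
\[
    (\lambda-A)^{-1}f=\int_0^\infty e^{-\lambda t}T(t)f\,dt, \qquad f\in L^p[0,1].
\]
By \Cref{prop:integral_resolvent}(i) this integral equals $\Lambda_0^{\lambda}f$. This proves the resolvent formula and shows $\sigma(A)\subseteq\{\Real\lambda\leq -1/p'\}$.

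For the reverse inclusion, \Cref{prop:point_spectrum} already gives $\{\Real\lambda<-1/p'\}=\sigma_{point}(A)\subseteq\sigma(A)$. The spectrum is closed, so its closure $\{\Real\lambda\leq-1/p'\}$ is also contained in $\sigma(A)$, and equality follows.

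As a sanity check, one can verify directly that $\Lambda_0^\lambda$ inverts $\lambda-A$. For $f\in L^p[0,1]$, the function $g=\Lambda_0^\lambda f$ is locally absolutely continuous on $(0,1)$, with
\[
    sg'(s)=-(\lambda+1)g(s)+f(s),
\]
so $(\lambda-A)g=\lambda g+sg'+g=f$. Boundedness of $\Lambda_0^\lambda$ on $L^p$ follows from the Bochner bound
\[
    \int_0^\infty e^{-\Real\lambda\, t}\,e^{-t/p'}\,dt=\frac{1}{\Real\lambda+1/p'}<\infty,
\]
which is the Hardy-type inequality in disguise.

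The only point needing care is the boundary line $\Real\lambda=-1/p'$. It does not contain eigenvalues, but it is captured by closedness of the spectrum, so no separate approximate-eigenvector construction is needed here. Finer information on that line (approximate point versus residual spectrum) is presumably handled later, using the functional $L^\lambda$ and \Cref{kerdensity}.
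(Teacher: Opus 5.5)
Your proof is correct and follows the paper's argument. The Laplace-transform representation of the resolvent (Engel--Nagel, Th.~II.1.10), together with \Cref{prop:integral_resolvent}(i), gives both the formula and $\sigma(A)\subseteq\{\Real\lambda\leq -1/p'\}$, and \Cref{prop:point_spectrum} plus closedness of the spectrum gives the reverse inclusion. Your direct check that $\Lambda_0^\lambda$ is a right inverse is an extra the paper omits; on its own it would also need injectivity of $\lambda-A$, which \Cref{prop:point_spectrum} supplies. Your guess about later steps is slightly off: the boundary line for $A$ is handled by density of $C_c^\infty(0,1)$ in \Cref{lemma:rangeA_density}, while $L^\lambda$ and \Cref{kerdensity} are used only for $B$.
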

\begin{proof}
    Let $\Real\lambda >-1/p'$, by the integral representation of the resolvent operator of $A$ in terms of the semigroup $(T(t))_{t\geq 0}$ (see e.g. \cite[Th. II.1.10]{EngelNagel2000}) and Proposition \ref{prop:integral_resolvent}, one has
	\[
        (\lambda-A)^{-1}f = \Lambda_0^{\lambda}f, \quad f\in L^p[0,1].
    \]

    Therefore, $\sigma(A) \subseteq \{\lambda \in \C : \Real \lambda \leq -1/p'\}$. Since $\sigma_{point}(A) \subseteq \sigma(A)$ and the spectrum is a closed subset of $\C,$ the result follows by Proposition \ref{prop:point_spectrum}.
\end{proof}

\begin{proposition}
    \label{prop:spectrum_B} 
    Let $1\leq p<\infty.$ The  spectrum of $B$ satisfies $\sigma(B)\subseteq \{\lambda\in\C\,:\, \Real\lambda\leq -1/p\}.$ Moreover, for $\lambda\in\C$ with $\Real\lambda>-1/p,$ $$(\lambda-B)^{-1}f=\Lambda_1^{\lambda}f,\quad f\in L^p[0,1].$$
\end{proposition}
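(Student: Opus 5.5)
The plan mirrors the proof of \Cref{prop:spectrum_A}, with the semigroup $(S(t))_{t\geq 0}$ in place of $(T(t))_{t\geq 0}$. By \Cref{th:semigroups}, $(S(t))_{t\geq 0}$ is a $C_0$-semigroup on $L^p[0,1]$ with generator $B$ and growth bound $\omega_0=-1/p$. Indeed, \Cref{pr:boundedness_semigroup} gives $\norm{S(t)}=e^{-t/p}$ exactly, so $M=1$ and $\omega=-1/p$ is admissible.

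The standard integral representation of the resolvent (\cite[Th. II.1.10]{EngelNagel2000}) then shows that every $\lambda\in\C$ with $\Real\lambda>-1/p$ lies in the resolvent set $\rho(B)$. For such $\lambda$ it also gives
\[
    (\lambda-B)^{-1}f=\int_0^\infty e^{-\lambda t}S(t)f\,dt, \quad f\in L^p[0,1].
\]
The integral converges absolutely as a Bochner integral, because $\norm{e^{-\lambda t}S(t)}=e^{-(\Real\lambda+1/p)t}$ is integrable on $(0,\infty)$.

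Next I invoke \Cref{prop:integral_resolvent}(ii). It identifies this Bochner integral with $\Lambda_1^{\lambda}f$ on the same half-plane $\Real\lambda>-1/p$, which yields $(\lambda-B)^{-1}f=\Lambda_1^{\lambda}f$. The inclusion $\sigma(B)\subseteq\{\lambda:\Real\lambda\leq -1/p\}$ follows at once, since the complement of that set is contained in $\rho(B)$.

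The only delicate point is that the Bochner integral in $L^p$ and the pointwise integral defining $\Lambda_1^\lambda f$ must coincide almost everywhere. This was already settled in \Cref{prop:integral_resolvent}: a Bochner integral in $L^p$ agrees a.e. with the pointwise integral when the latter converges absolutely, which holds here by Fubini together with the Hölder estimate given there. So no real obstacle remains.

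Unlike for $A$, I do not try to prove equality of the spectrum here, because $\sigma_{point}(B)=\emptyset$ by \Cref{prop:point_spectrum}. Showing that the whole half-plane $\Real\lambda\le -1/p$ lies in $\sigma(B)$ would presumably require duality with $A$ via \Cref{remark:duality}, and only the inclusion is claimed.
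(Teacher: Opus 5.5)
Your proposal is correct and follows the paper's proof: you apply the resolvent integral representation \cite[Th. II.1.10]{EngelNagel2000} for the semigroup $(S(t))_{t\geq 0}$ with growth bound $-1/p$, and identify the resulting Bochner integral with $\Lambda_1^{\lambda}f$ via \Cref{prop:integral_resolvent}(ii). Your extra remarks on absolute convergence and on a.e.\ agreement with the pointwise integral only spell out details the paper leaves implicit.
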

\begin{proof}
    The result is a straightforward consequence of  \cite[Th. II.1.10]{EngelNagel2000} and Proposition \ref{prop:integral_resolvent}.
\end{proof}

\begin{remark} 
    As we have seen in \Cref{remark:duality} the semigroup $S(t)$ is the dual semigroup
    of $T(t)$ for each $t \geq 0$. Notice that as the space $L^p[0,1]$ is reflexive for 
    $1 < p < \infty$ using $C_0$-semigroup theory and duality we could compute the exact spectrum directly of the operator $B$, see \cite[Paragraph II.2.5]{EngelNagel2000}.
    But, in order to do a detailed study we leave this as a remark.
\end{remark}

We proceed to study the range space of the operator $\lambda - A: D(A) \to L^p[0,1]$ for a fixed $\lambda \in \C$.  Given $f\in L^p[0,1],$ it is an easy computation that the solutions of the differential equation  $(\lambda-A)g= f$ are given by
\begin{equation}
    \label{ODEsol}
		g(s)=g_{f,K}(s) = \frac{1}{s^{\lambda+1}}\biggl(K-\int_s^1 u^{\lambda} f(u)\,du\biggr), \quad  s \in (0,1),\,  K\in \C.
\end{equation}

\begin{lemma}
    \label{rangeA} 
    Let $1\leq p<\infty$ and $\lambda\in\C$ with $\Real \lambda <-1/p'.$ Then $\ran (\lambda-A)=L^p[0,1].$
\end{lemma}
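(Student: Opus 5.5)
The plan is to write down an explicit preimage using the solution formula \eqref{ODEsol} and to recognise it as one of the operators $\Lambda_1^{\mu}$ already studied, so that no new estimate is needed.

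Let $f\in L^p[0,1]$. The key choice is the constant $K$ in \eqref{ODEsol}. A natural first guess is $K=\int_0^1 u^\lambda f(u)\,du$. It fails, because the assumption $\Real\lambda<-1/p'$ means $u^{\Real\lambda}\notin L^{p'}$ near $0$, so this integral need not converge. I would instead take $K=0$ and set
\[
g(s) = -\frac{1}{s^{\lambda+1}}\int_s^1 u^{\lambda} f(u)\,du, \quad s\in(0,1).
\]
Put $\mu:=-\lambda-1$. Comparing with \eqref{eq:integral_Lambda_operators} gives $s^{\mu}\int_s^1 f(u)u^{-\mu-1}\,du = s^{-\lambda-1}\int_s^1 u^{\lambda}f(u)\,du$, so $g=-\Lambda_1^{\mu}f$.

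Next, $\Real\mu=-\Real\lambda-1>1/p'-1=-1/p$. Proposition \ref{prop:integral_resolvent}(ii) then represents $\Lambda_1^{\mu}f$ as the Bochner integral $\int_0^\infty e^{-\mu t}S(t)f\,dt$. This integral converges in $L^p[0,1]$ because $\norm{S(t)}=e^{-t/p}$ by Proposition \ref{pr:boundedness_semigroup}. Hence $g\in L^p[0,1]$; equivalently, $g=-(\mu-B)^{-1}f$ by Proposition \ref{prop:spectrum_B}.

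It remains to check that $g\in D(A)$ and that $(\lambda-A)g=f$. Since $u^\lambda f\in L^1_{loc}(0,1]$, the function $s\mapsto\int_s^1 u^\lambda f(u)\,du$ is locally absolutely continuous on $(0,1)$, and therefore so is $g$. Differentiating a.e. gives $sg'(s)=-(\lambda+1)g(s)+f(s)$. The right-hand side lies in $L^p[0,1]$, so $g\in D(A)$, and the same identity is exactly $\lambda g+sg'+g=f$, that is, $(\lambda-A)g=f$. Thus $f\in\ran(\lambda-A)$, and since $f$ was arbitrary, $\ran(\lambda-A)=L^p[0,1]$.

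The only real obstacle is the choice of $K$ together with the $L^p$-bound on $g$. The identification $g=-\Lambda_1^{-\lambda-1}f$ turns this into the earlier semigroup estimate, so no separate Hardy-type inequality is required.
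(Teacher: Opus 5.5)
Your proposal is correct and is essentially the paper's argument. The paper also rewrites the integral term of \eqref{ODEsol} via $u=e^ts$ as $\int_0^\infty e^{t(\lambda+1)}S(t)f\,dt$, and uses $\norm{S(t)f}_p=e^{-t/p}\norm{f}_p$ to get convergence exactly when $\Real\lambda<-1/p'$; this is your identification $g=-\Lambda_1^{-\lambda-1}f$ through Proposition \ref{prop:integral_resolvent}(ii), and your explicit check that $g\in D(A)$ with $(\lambda-A)g=f$ (as well as the remark $g=-(\mu-B)^{-1}f$) makes precise a step the paper leaves implicit.
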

\begin{proof}
    On one hand notice that $\frac{1}{s^{\lambda+1}} \in L^p[0,1]$ if and only if $\Real \lambda <-1/p'.$ Therefore, we have to check what happens with the integral part of \eqref{ODEsol}. The change of variable $u=e^t s$ gives
    \[
        \frac{1}{s^{\lambda+1}} \int_s^1 u^\lambda f(u) \, du = \int_0^\infty e^{t(\lambda+1)} (S(t)f)(s) \, dt,\quad s\in (0,1).
    \] 
    We have already seen in the proof of Theorem \ref{th:semigroups} that $\| S(t) f\|_{p} = e^{-\frac{t}{p}} \|f\|_p$. Therefore,
    \[
        \int_0^\infty |e^{t(\lambda+1)}|  \|S(t)f \|_p \, dt = \| f \|_p \int_0^\infty e^{t(\Real \lambda +1 -\frac{1}{p})} \, dt,
    \]
    where the integral converges for $\Real \lambda < -\frac{1}{p'}$. Thus, $\ran(\lambda - A) = L^p[0,1]$ for all $\lambda \in \C$ such that $\Real \lambda < - \frac{1}{p'}$ as claimed.
\end{proof}

\begin{remark} Note that by Proposition \ref{prop:spectrum_A} and Lemma \ref{rangeA} the range of $\lambda-A$ is the whole space $L^p[0,1]$ ($1\leq p<\infty$) for $\Real \lambda\neq -1/p'.$ Next result states what happens when  $\Real \lambda =-1/p'.$
\end{remark}

\begin{lemma}
    \label{lemma:rangeA_density}
        Let $1\leq p<\infty$ and $\lambda\in\C$ with $\Real \lambda =-1/p'.$ Then $\ran (\lambda-A)$ is dense on $L^p[0,1]$.
\end{lemma}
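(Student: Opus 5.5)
Let $\Real\lambda=-1/p'$. We show that $\ran(\lambda-A)$ contains a dense subspace, namely the kernel of a suitable functional on $C_c^\infty(0,1)$, using the explicit solutions \eqref{ODEsol}.

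First take $f\in C_c^\infty(0,1)$ with $\operatorname{supp} f\subseteq[a,b]$, $0<a<b<1$. Choose $K=\int_0^1 u^\lambda f(u)\,du$ in \eqref{ODEsol}. Then
\[
g(s)=\frac{1}{s^{\lambda+1}}\int_0^s u^{\lambda}f(u)\,du .
\]
This $g$ vanishes on $(0,a)$. It is smooth on $(0,1)$, and on $(b,1)$ it equals $K s^{-\lambda-1}$, which is bounded on $[b,1]$. Hence $g\in L^p[0,1]$ and $g\in AC_{loc}(0,1)$. Moreover
\[
sg'(s)=-(\lambda+1)g(s)+s^{-\lambda-1}s^\lambda f(s)\cdot s=-(\lambda+1)g(s)+f(s),
\]
which also lies in $L^p$. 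So $g\in D(A)$ and $(\lambda-A)g=f$. It follows that $C_c^\infty(0,1)\subseteq\ran(\lambda-A)$, and density follows at once.

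With this choice of $K$ there is no obstruction at $0$, because $f$ vanishes near $0$. For the same reason no kernel condition of the type in \Cref{kerdensity} is needed: the potential problem at $0$, where $s^{-\lambda-1}$ has modulus $s^{1/p'-1}=s^{-1/p}$, which is not in $L^p$, is avoided by choosing $K$ so that the integral from $0$ vanishes near $0$. At $1$ nothing goes wrong on $[0,1]$.

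The density of $C_c^\infty(0,1)$ in $L^p[0,1]$ for $1\le p<\infty$ then gives that $\ran(\lambda-A)$ is dense.

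The main thing to check is that $g\in D(A)$: local absolute continuity, and that $g$ and $sg'$ are in $L^p$ near both endpoints. Near $0$ this is trivial since $g\equiv0$ there. Near $1$, $g=Ks^{-\lambda-1}$ is bounded. The argument is also uniform in $p$, including $p=1$, where $\Real\lambda=0$.

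If one instead wanted the exact description of the range, one would use Corollary \ref{kerdensity}, and the condition $K=0$ would matter. But for density alone, the direct construction above suffices.
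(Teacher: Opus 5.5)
Your proof is correct and is essentially the paper's argument. For $f\in C_c^\infty(0,1)$ the paper takes the same constant $K=\int_0^1 u^{\lambda}f(u)\,du$ in \eqref{ODEsol}, observes that the resulting $g$ is bounded and lies in $D(A)$, and concludes $C_c^\infty(0,1)\subseteq\ran(\lambda-A)$; your check of $sg'=-(\lambda+1)g+f$ and of $g$ near both endpoints just fills in details the paper leaves implicit.
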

\begin{proof}
    We will use the fact that the functions with compact support, $C_c^\infty(0,1)$ are dense in $L^p[0,1]$. Let  $\lambda\in\C$ with $\Real \lambda =-1/p',$ $f \in C_c^\infty(0,1),$ and define
    \[
        g(s) = \frac{1}{s^{\lambda+1}}\biggl(\int_0^1 u^{\lambda} f(u)-\int_s^1 u^{\lambda} f(u)\,du\biggr),\quad s\in (0,1).
    \]
    Notice that $g$ is bounded, and therefore $g \in L^p[0,1].$ Moreover, by \eqref{ODEsol} $g$ is a solution of the differential equation $(\lambda-A)g= f$ with  $g \in D(A).$ This implies that $ C_c^\infty(0,1) \subseteq Ran(\lambda - A)$. As $C_c^\infty(0,1)$ is dense in $L^p[0,1]$ we have that $Ran(\lambda - A)$ is dense as well in $L^p[0,1]$.
\end{proof}

Now we focus on the study of the range space of the operator $\lambda - B: D(B) \to L^p[0,1]$ for a fixed $\lambda \in \C.$ Given $f \in L^p[0,1],$ the solutions of the differential equation $(\lambda-B)h= f$ are given by

\begin{equation}
    \label{eq:ODEsol2}
    h(s)=h_{f,K}(s) = s^{\lambda}\biggl (K+\int_s^1 \frac{f(u)}{u^{\lambda+1} }\,du\biggr), \quad  s \in (0,1),\,  K\in \C.\\
\end{equation}

\begin{lemma}
    \label{lemma:rangeB} 
   Let $1 \leq p < \infty$ and $\lambda \in \C$ with $\Real \lambda < -1/p$. Then, $\ran (\lambda - B) = \ker L^\lambda$. Additionally, for $p = 1$ and $\Real \lambda = -1$, we have $\ran (\lambda - B) \subseteq \ker L^\lambda$.

\end{lemma}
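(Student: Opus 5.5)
We prove the two inclusions separately, using the explicit solution formula \eqref{eq:ODEsol2} for $(\lambda-B)h=f$ and the continuity of $L^\lambda$ from \Cref{lemma:L_lambda_bounded}. Throughout, $\Real\lambda<-1/p$, so $\Real\lambda<0$.

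\emph{Inclusion $\ran(\lambda-B)\subseteq\ker L^\lambda$.} Take $h\in D(B)$ and set $f=(\lambda-B)h=\lambda h-sh'$. Then $f$ is locally integrable on $(0,1)$ and, for $s\in(0,1)$,
\[
\frac{f(u)}{u^{\lambda+1}}=-\frac{d}{du}\bigl(u^{-\lambda}h(u)\bigr).
\]
Integrating over $[s,r]$ and letting $r\to1^-$ gives, since $\lim_{r\to1^-}h(r)=0$,
\[
\int_s^1\frac{f(u)}{u^{\lambda+1}}\,du=s^{-\lambda}h(s).
\]
By \Cref{lemma:L_lambda_bounded} the left side converges to $L^\lambda f$ as $s\to0^+$, because $u^{-\lambda-1}f(u)$ is integrable on $(0,1)$. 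Hence $\ell:=\lim_{s\to0^+}s^{-\lambda}h(s)$ exists and equals $L^\lambda f$.

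It remains to show $\ell=0$. If $\ell\neq0$, then $\abs{h(s)}\sim\abs{\ell}\,s^{\Real\lambda}$ near $0$. Since $p\Real\lambda<-1$, this forces $h\notin L^p[0,1]$, a contradiction. So $L^\lambda f=0$.

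The same argument covers $p=1$ with $\Real\lambda=-1$: continuity of $L^\lambda$ is again given by \Cref{lemma:L_lambda_bounded}, and $\abs{\ell}\,s^{-1}$ is not integrable near $0$. This yields the additional inclusion claimed in the statement.

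\emph{Inclusion $\ker L^\lambda\subseteq\ran(\lambda-B)$.} Let $f\in L^p[0,1]$ with $L^\lambda f=0$. In \eqref{eq:ODEsol2} choose $K=0$ and define
\[
h(s)=s^\lambda\int_s^1\frac{f(u)}{u^{\lambda+1}}\,du=(\Lambda_1^\lambda f)(s).
\]
Then $h$ is locally absolutely continuous, satisfies $\lambda h-sh'=f$, and tends to $0$ as $s\to1^-$. Once we know $h\in L^p$, it follows that $sh'=\lambda h-f\in L^p$, so $h\in D(B)$ and $f=(\lambda-B)h$.

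\emph{Main obstacle: showing $h\in L^p[0,1]$.} The formula $\Lambda_1^\lambda=\int_0^\infty e^{-\lambda t}S(t)\,dt$ of \Cref{prop:integral_resolvent} is unavailable here, because it requires $\Real\lambda>-1/p$. Instead we use $L^\lambda f=0$ to rewrite
\[
h(s)=-s^\lambda\int_0^s\frac{f(u)}{u^{\lambda+1}}\,du.
\]
Substituting $u=e^{-t}s$ gives
\[
h(s)=-\int_0^\infty e^{\lambda t}f(e^{-t}s)\,dt=-\int_0^\infty e^{(\lambda+1)t}\,(T(t)f)(s)\,dt.
\]
By \Cref{pr:boundedness_semigroup}, $\norm{T(t)}=e^{-t/p'}$, so the integrand has norm at most $e^{t(\Real\lambda+1-1/p')}\norm{f}_p=e^{t(\Real\lambda+1/p)}\norm{f}_p$. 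This decays exponentially since $\Real\lambda<-1/p$. Hence the integral converges in the Bochner sense in $L^p$, and $\norm{h}_p\le\norm{f}_p/(-\Real\lambda-1/p)$.

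Finally, the Bochner integral must agree pointwise a.e.\ with the formula for $h$. This follows as in \Cref{prop:integral_resolvent}, using Fubini and the Hölder bound from \Cref{lemma:L_lambda_bounded}, which guarantees absolute convergence.
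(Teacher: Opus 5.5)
Your proof is correct and follows essentially the same route as the paper. For $\ran(\lambda-B)\subseteq\ker L^\lambda$, you use the boundary condition at $1$ to force the constant in \eqref{eq:ODEsol2} to be $K=0$ (a slight streamlining of the paper's two-case analysis on $K$) and then the non-integrability of $s^{p\Real\lambda}$ near $0$; for $\ker L^\lambda\subseteq\ran(\lambda-B)$, you rewrite $h$ using $L^\lambda f=0$, substitute $u=e^{-t}s$, and bound with $\norm{T(t)}=e^{-t/p'}$ exactly as the paper does.
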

\begin{proof}
   
We first start proving that $\ran(\lambda - B) \subseteq \ker L^\lambda$ for $\Real \lambda<-1/p$ with $1<p<\infty,$ and also for $\Real \lambda\leq-1$ if $p=1.$ The idea will be to take a function in $\ran(\lambda - B)$ that is not in $\ker L^\lambda$, and we will reach a contradiction.

First note that the mappings,
    \[
        \begin{aligned}
           I_1: [0,1] & \to \C\\
            s &\mapsto \int_s^1 \frac{f(u)}{u^{\lambda+1} }\,du, \quad f \in L^p[0,1],
        \end{aligned}
    \]
   and  \[\begin{aligned}
           I_2: [0,1] & \to \C\\
            s &\mapsto \int_0^s \frac{f(u)}{u^{\lambda+1} }\,du, \quad f \in L^p[0,1],
        \end{aligned} \]
        are continuous  by under some assumptions on the value $\lambda,$ because \Cref{lemma:L_lambda_bounded} allows to apply  the Dominated Convergence Theorem.

  Let now $f \in \ran(\lambda - B)$ with $f \notin \ker L^\lambda$ and let $h$ given by \eqref{eq:ODEsol2}. We have two different scenarios, $K = - L^\lambda f$ or $K \neq - L^\lambda f$. In the first case, the function $h$ simplifies to,
    \[
        h(s) = -s^\lambda \int_0^s \frac{f(u)}{u^{\lambda+1}}\,du,
    \]
    and by the continuity of $I_2$ one gets $\lim_{s \to 1^{-}} h(s) = - L^\lambda f \neq 0$ since $f \notin \ker L^\lambda,$ hence $h \notin D(B)$, which is a contradiction. On the second scenario, by the continuity of $I_1$ we have $\lim_{s \to 0^+} h(s) = \lim_{s\to 0^+} c s^\lambda$ for some complex constant $c \neq 0$. Since $s^\lambda\notin L^p[0,1]$ for $\Real \lambda<-1/p$ with $1<p<\infty,$ and also for $\Real \lambda\leq-1$ if $p=1,$ then $h\notin L^p[0,1].$ So, we have reached a contradiction in either case, and we have proved that $\ran(\lambda - B) \subseteq \ker L^\lambda.$

    In the following lines we prove that $\ker L^\lambda\subseteq \ran(\lambda - B)$ for $\Real \lambda <-1/p$ with $1\leq p<\infty$. Let $f\in\ker L^\lambda.$ We have that $f$ lies in $\ran (\lambda - B)$ if 
    and only if there exists $K\in\C$ such that $h$ given by \eqref{eq:ODEsol2} is on $L^p[0,1]$. Taking $K=L^\lambda f=0$ as before, we have
    \[
        h(s) = -s^\lambda \int_0^s \frac{f(u)}{u^{\lambda +1}} \, du.
    \]
    By a change of variable,
    \[
        h(s)   =- s^\lambda \int_0^s \frac{f(u)}{u^{\lambda +1}} \, du
                    = -\int_0^\infty e^{\lambda t} f(e^{-t}s) \, dt 
                    = -\int_0^\infty  e^{(\lambda + 1) t} (T(t)f)(s) \, dt.
    \]
    Thus, applying Minkowski's inequality and the fact that $\norm{T(t)} = e^{-t/p'}$ we get
    \[
        \|h\|_p    \leq \int_0^\infty e^{(\Real \lambda +\frac{1}{p}) t} \, dt,
    \]
    which is convergent for $\Real \lambda < -\frac{1}{p}$, so $h \in L^p[0,1]$. Then $f \in \ran(\lambda - B),$ and therefore $\ran(\lambda - B) = \ker L^\lambda$. 
\end{proof}

\begin{lemma}
    \label{lemma:rangeB_density} 
    Let $1\leq p<\infty$ and $\lambda\in\C$ with $\Real \lambda \leq -1/p,$ then the $\ran (\lambda-B)$ has the following density properties
    \begin{enumerate}[(i)]
        \item if $p=1$, $\ran (\lambda-B)$ is not dense on $L^1[0,1]$,
        \item and if $p>1$, $\ran (\lambda-B)$ is dense on $L^p[0,1]$ when $\Real \lambda = -1/p$ and is not dense when $\Real \lambda < -1/p$. 
    \end{enumerate}
\end{lemma}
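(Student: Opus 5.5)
The plan is to split into the non-dense cases, which follow from the range being inside the kernel of a nonzero bounded functional, and the dense case $p>1$, $\Real\lambda=-1/p$, which rests on \Cref{kerdensity}.

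\emph{Non-density for $\Real\lambda<-1/p$ (any $1\le p<\infty$) and for $p=1$, $\Real\lambda=-1$.} By \Cref{lemma:rangeB}, $\ran(\lambda-B)\subseteq\ker L^\lambda$. By \Cref{lemma:L_lambda_bounded}, $L^\lambda$ is a continuous linear functional on $L^p[0,1]$ in these cases. By \Cref{remark:ker_functional}, it is not the zero functional. Hence $\ker L^\lambda$ is a closed proper hyperplane, so its subset $\ran(\lambda-B)$ is not dense. For $p=1$ the condition $\Real\lambda\le -1/p$ means exactly $\Real\lambda\le-1$, so this settles (i) and the non-dense half of (ii).

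\emph{Density for $p>1$, $\Real\lambda=-1/p$.} I would show $\ker L^\lambda\cap C_c^\infty(0,1)\subseteq\ran(\lambda-B)$ and then conclude with \Cref{kerdensity}. Take $f\in C_c^\infty(0,1)$ with $\int_0^1 f(u)u^{-\lambda-1}\,du=0$, and choose $K=0$ in \eqref{eq:ODEsol2}:
\[
h(s)=s^\lambda\int_s^1\frac{f(u)}{u^{\lambda+1}}\,du=-s^\lambda\int_0^s\frac{f(u)}{u^{\lambda+1}}\,du .
\]
Suppose $\operatorname{supp} f\subseteq[a,b]\subset(0,1)$.
\begin{itemize}
\item For $s<a$ the second expression vanishes, so $h=0$ near $0$.
\item For $s>b$ the first expression vanishes, so $h=0$ near $1$ and $\lim_{s\to1^-}h(s)=0$.
\item On $[a,b]$, $h$ is smooth and bounded.
\end{itemize}
Hence $h\in L^p[0,1]$, $h\in AC_{loc}(0,1)$, and $sh'(s)=\lambda h(s)-f(s)\in L^p[0,1]$. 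Therefore $h\in D(B)$ and $(\lambda-B)h=f$. Since $\ker L^\lambda\cap C_c^\infty(0,1)$ is dense in $L^p[0,1]$ by \Cref{kerdensity}, $\ran(\lambda-B)$ is dense.

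The main subtlety is this last case. Here $L^\lambda$ is unbounded (\Cref{lemma:L_lambda_unbounded}), so the argument must stay on the dense subspace $C_c^\infty(0,1)$ where the kernel makes sense. The vanishing of $L^\lambda f$ is exactly what makes the two expressions for $h$ agree, so that $h$ is compactly supported and meets the boundary condition at $1$.
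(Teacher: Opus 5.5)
Your proposal is correct and follows essentially the same route as the paper. Non-density comes from $\ran(\lambda-B)\subseteq\ker L^\lambda$ with $L^\lambda$ bounded and nonzero (\Cref{lemma:rangeB}, \Cref{lemma:L_lambda_bounded}, \Cref{remark:ker_functional}), and density for $p>1$, $\Real\lambda=-1/p$ comes from solving $(\lambda-B)h=f$ with $K=0$ for $f\in\ker L^\lambda\cap C_c^\infty(0,1)$ and invoking \Cref{kerdensity}; your check that $h$ vanishes near $0$ (where $L^\lambda f=0$ is used, since the condition at $1$ is automatic from $K=0$) and lies in $D(B)$ just spells out what the paper states in one line.
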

\begin{proof}
    For the case $p=1,$ notice that by \Cref{lemma:rangeB} $\ran (\lambda-B) \subseteq \ker L^\lambda$, where $\ker L^\lambda$ is closed since $L^\lambda$ is a bounded operator on $L^p[0,1]$ (see \Cref*{lemma:L_lambda_bounded}). Moreover, as $\ker L^\lambda \neq L^1[0,1]$, see \Cref{remark:ker_functional}, we have that
    \[
        \overline{\ran (\lambda-B)} \subseteq \ker L^\lambda \neq L^1[0,1],
    \]
    so $\ran (\lambda-B)$ is not dense. 

    On the other hand, when $p>1$ the same idea works when $\Real \lambda < -1/p$ to prove $\ran (\lambda-B)$ is not dense on $L^p[0,1]$. When $\Real \lambda = -1/p,$ the proof of \Cref{lemma:L_lambda_unbounded} shows that the linear functional $L^\lambda$ is defined on $C_c^\infty(0,1).$ Take  $f \in C_c^\infty(0,1)$  such that $L^{\lambda} f=0.$  We construct the function $h(s) = h_{f,0}(s)$ as in \eqref{eq:ODEsol2}, that is,
    \[
        h(s) = s^\lambda \int_s^1 \frac{f(u)}{u^{\lambda+1}} \, du,\quad s\in (0,1).
    \]
    Notice that $h \in L^p[0,1]$ as it is a continuous function with compact support on $(0,1).$ Then the kernel of the functional $L^{\lambda}$ on $C_c^\infty(0,1)$ is contained on $\ran(\lambda-B),$ and also is dense in $L^p[0,1]$ (see \Cref{kerdensity}). Therefore, the result follows.
\end{proof}

\begin{theorem}
    \label{th:fine_spectrum_A}
    Let $1\leq p<\infty.$ The differential operator $A$ satisfies that 
    \begin{enumerate}[(i)]
        \setlength\itemsep{0.5em}

        \item $\sigma(A)=\{\lambda\in\C\,:\, \Real\lambda\leq -1/p'\}$.

        \item $\sigma_{point}(A)=\{\lambda\in\C\,:\, \Real\lambda< -1/p'\}$. 

        \item $\sigma_{ap}(A)=\{\lambda\in\C\,:\, \Real\lambda\leq  -1/p'\}$.

        \item $\sigma_r(A)=\emptyset$.
        
        \item $\sigma_{ess}(A)=\{\lambda\in\C\,:\, \Real\lambda= -1/p'\}$.
    \end{enumerate}
\end{theorem}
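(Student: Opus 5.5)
Items (i) and (ii) are exactly \Cref{prop:spectrum_A} and \Cref{prop:point_spectrum}. For the remaining items I split $\sigma(A)$ into the open half-plane $\Real\lambda<-1/p'$ and the line $\Real\lambda=-1/p'$, and combine the point-spectrum information with the range lemmas \Cref{rangeA} and \Cref{lemma:rangeA_density}.

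For (iii), $\sigma_{ap}(A)\subseteq\sigma(A)$ always holds. Conversely, every $\lambda$ with $\Real\lambda<-1/p'$ is an eigenvalue, so $\lambda-A$ is not injective and $\lambda\in\sigma_{ap}(A)$. For the line $\Real\lambda=-1/p'$, note that the topological boundary of the spectrum of a closed operator is contained in the approximate point spectrum. Indeed, take $\lambda_n\to\lambda$ with $\lambda_n\in\rho(A)$; then $\|(\lambda_n-A)^{-1}\|\ge 1/\dist(\lambda_n,\sigma(A))\to\infty$, and normalizing gives approximate eigenvectors. Since the line is the boundary of $\sigma(A)$, it lies in $\sigma_{ap}(A)$. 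Alternatively, one could construct approximate eigenvectors explicitly, truncating $s^{-\lambda-1}$ to $[1/n,1]$ and normalizing in $L^p$, in the spirit of \Cref{lemma:L_lambda_unbounded}. Either way, $\sigma_{ap}(A)=\sigma(A)$.

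For (iv), the range of $\lambda-A$ is all of $L^p[0,1]$ when $\Real\lambda<-1/p'$ by \Cref{rangeA}, and when $\Real\lambda>-1/p'$ since $\lambda\in\rho(A)$. When $\Real\lambda=-1/p'$ the range is dense by \Cref{lemma:rangeA_density}. Hence no $\lambda$ has non-dense range, and $\sigma_r(A)=\emptyset$.

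For (v), points with $\Real\lambda>-1/p'$ are resolvent points and so are not in $\sigma_{ess}(A)$. For $\Real\lambda<-1/p'$, the kernel is one-dimensional by \Cref{prop:point_spectrum} and the codimension of the range is $0$ by \Cref{rangeA}, so $\lambda\notin\sigma_{ess}(A)$. The main obstacle is the line $\Real\lambda=-1/p'$, where the kernel is trivial and I must show $\codim\ran(\lambda-A)=\infty$.

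My plan for this is to show that the range is dense but not closed, and then to use the fact that an operator with closed graph whose range has finite codimension has closed range. The standard argument applies $\lambda-A$ on $D(A)$ equipped with the graph norm, extended by a finite-dimensional complement, together with the open mapping theorem. Then finite codimension plus density would force surjectivity, so $\lambda-A$ would be bijective and $\lambda\in\rho(A)$, contradicting (i). Hence the range is a dense proper subspace, which is not closed. Given density, this is equivalent to $\codim\ran(\lambda-A)=\infty$, so $\lambda\in\sigma_{ess}(A)$. This gives $\sigma_{ess}(A)=\{\Real\lambda=-1/p'\}$.
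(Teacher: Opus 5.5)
Your proposal is correct. For items (i)--(iv) it is essentially the paper's proof: the paper also uses $\sigma_{point}(A)\subseteq\sigma_{ap}(A)$ and $\partial\sigma(A)\subseteq\sigma_{ap}(A)$ for (iii), and \Cref{rangeA} with \Cref{lemma:rangeA_density} for (iv).

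The genuine difference is in (v), on the line $\Real\lambda=-1/p'$. The paper observes that such $\lambda$ are accumulation points of both $\rho(A)$ and $\sigma(A)$. It then invokes the general result from Edmunds--Evans that such points lie in the essential spectrum. That result ultimately rests on the stability of the Fredholm property and index, i.e.\ the punctured-neighbourhood argument.

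You instead use information already available on the line: injectivity of $\lambda-A$ (from \Cref{prop:point_spectrum}) and density of its range (from \Cref{lemma:rangeA_density}). You combine these with the elementary fact that a closed operator whose range has finite algebraic codimension has closed range. If the codimension were finite, $\lambda-A$ would be bijective and hence $\lambda\in\rho(A)$, contradicting (i).

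\begin{itemize}
\item Your route is more self-contained: it needs only the open mapping and closed graph theorems.
\item It also gives slightly more: on the line, $\ran(\lambda-A)$ is dense but not closed. With the paper's definition of $\sigma_{ap}$, this independently re-proves the line part of (iii).
\item The paper's route is shorter and more general. It applies to any boundary point of the spectrum that is a limit point of the spectrum, without knowing the kernel or the density of the range there.
\end{itemize}
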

\begin{proof} 
    Items (i) and (ii) are Propositions \ref{prop:spectrum_A} and \ref{prop:point_spectrum}, respectively.

    Item (iii) follows from the fact that $\sigma_{point}(A) \subseteq \sigma_{ap}(A)$ and
    $\partial \sigma(A) \subseteq \sigma_{ap}(A),$ and item (iv) follows from \Cref{rangeA} and \Cref{lemma:rangeA_density}.

    Lastly, by \Cref{prop:point_spectrum} and \Cref{rangeA} we have $\sigma_{ess}(A)\subseteq\{\lambda\in\C\,:\, \Real\lambda=-1/p'\}.$ Moreover, by Proposition \ref{prop:spectrum_A}, if $\lambda\in\C$ with $ \Real\lambda=-1/p',$ then it is an accumulation point of both the resolvent set $\rho(A)$ and the spectrum $\sigma(A),$ therefore $\lambda\in\sigma_{ess}(A)$ (see \cite[Th. I.3.25]{EdmundsEvans1987}), and the result for the essential spectrum follows.
\end{proof}

\begin{theorem}
    \label{th:fine_spectrum_B}
    Let $1\leq p<\infty.$ The differential operator $B$ satisfies that 
    \begin{enumerate}[(i)]
        \setlength\itemsep{0.5em}

        \item $\sigma(B)=\{\lambda\in\C\,:\, \Real\lambda\leq -1/p\}.$

        \item $\sigma_{point}(B)=\emptyset.$

        \item $\sigma_{ap}(B)=\{\lambda\in\C\,:\, \Real\lambda =  -1/p\}.$

        \item For $p=1$, $\sigma_r(B)=\{\lambda\in\C\,:\, \Real\lambda \leq -1\},$ \\ for $p>1$, $\sigma_r(B)=\{\lambda\in\C\,:\, \Real\lambda < -1/p\}.$
        
        \item $\sigma_{ess}(B)=\{\lambda\in\C\,:\, \Real\lambda= -1/p\}$.
    \end{enumerate}
\end{theorem}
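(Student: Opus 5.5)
The argument parallels the proof of \Cref{th:fine_spectrum_A}: everything except the reverse inclusion in (i) comes from the range lemmas already proved. Item (ii) is \Cref{prop:point_spectrum}. For (i), \Cref{prop:spectrum_B} gives $\sigma(B)\subseteq\{\Real\lambda\le -1/p\}$. Conversely, if $\Real\lambda<-1/p$, then $\ran(\lambda-B)$ is not dense by \Cref{lemma:rangeB_density}, so $\lambda\in\sigma(B)$; this also covers $\Real\lambda=-1$ when $p=1$. Since the spectrum is closed, the line $\Real\lambda=-1/p$ lies in $\sigma(B)$ as well, which gives equality in (i).

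For (iv), \Cref{lemma:rangeB_density} shows the range fails to be dense exactly on $\{\Real\lambda\le -1\}$ when $p=1$, and exactly on $\{\Real\lambda<-1/p\}$ when $p>1$. Outside $\sigma(B)$ the range is the whole space, so the residual spectrum is precisely these sets.

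For (iii), boundary points of the spectrum are approximate eigenvalues, so $\{\Real\lambda=-1/p\}=\partial\sigma(B)\subseteq\sigma_{ap}(B)$. For the reverse inclusion, take $\Real\lambda<-1/p$; we must show $\lambda-B$ is injective with closed range. Injectivity holds because $\sigma_{point}(B)=\emptyset$. For closedness, \Cref{lemma:rangeB} gives $\ran(\lambda-B)=\ker L^\lambda$. By \Cref{lemma:L_lambda_bounded}, $L^\lambda$ is bounded for $\Real\lambda<-1/p$, and also for $p=1$, $\Real\lambda=-1$, so $\ker L^\lambda$ is closed. The delicate case is $p=1$, $\Real\lambda=-1$: \Cref{lemma:rangeB} only gives $\ran(\lambda-B)\subseteq\ker L^\lambda$ there. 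However, this line is the boundary of $\sigma(B)$ when $p=1$, so it already lies in $\sigma_{ap}(B)$ and needs no further argument. Hence $\sigma_{ap}(B)=\{\Real\lambda=-1/p\}$.

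For (v), take $\Real\lambda<-1/p$. Then $\ker(\lambda-B)=0$ and $\ran(\lambda-B)=\ker L^\lambda$ has codimension one, since $L^\lambda\neq0$ by \Cref{remark:ker_functional}. So $\lambda-B$ is Fredholm and $\lambda\notin\sigma_{ess}(B)$. Points with $\Real\lambda>-1/p$ are in the resolvent set. Points with $\Real\lambda=-1/p$ are accumulation points of both $\rho(B)$ and $\sigma(B)$, so \cite[Th. I.3.25]{EdmundsEvans1987} puts them in $\sigma_{ess}(B)$, exactly as in \Cref{th:fine_spectrum_A}.

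The main obstacle is the case $p=1$, $\Real\lambda=-1$. The known range inclusion is one-sided there, so for (v) we need another argument that these points lie in $\sigma_{ess}(B)$. The plan is again to use that they lie on $\partial\sigma(B)$ and invoke the accumulation-point criterion; if that criterion is not available as stated, we would instead build a Weyl singular sequence from truncations of $s^{\lambda}$ near $0$.
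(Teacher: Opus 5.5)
Your proposal is correct and follows the paper's proof essentially step for step. Item (ii) comes from \Cref{prop:point_spectrum}; (iv), and with it (i), comes from \Cref{lemma:rangeB_density} together with \Cref{prop:spectrum_B} and closedness of the spectrum; (iii) comes from the boundary inclusion plus closedness of $\ran(\lambda-B)=\ker L^\lambda$; and (v) comes from the codimension-one range combined with the Edmunds--Evans accumulation-point criterion. The ``obstacle'' you flag for $p=1$, $\Real\lambda=-1$ is not a separate case: for $p=1$ that line is exactly $\Real\lambda=-1/p$, so your boundary and accumulation-point arguments already cover it.
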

\begin{proof}
    Item (ii) and (iv) are direct consequences of \Cref{prop:point_spectrum} and \Cref{lemma:rangeB_density}, and therefore item (i) follows by \Cref{prop:spectrum_B} and the fact that the spectrum is closed.

    For item (iii) we have that $\{\lambda\in\C\,:\, \Real\lambda =  -1/p\}\subseteq \sigma_{ap}(B)$ as the boundary of the spectrum is contained in the approximate point spectrum. Now, if $\lambda \in \C$ with $\Real\lambda < -1/p$ we have that $(\lambda - B)$ is injective (see \Cref{prop:point_spectrum}) and $\ran(\lambda-B) = \ker L^\lambda$ (see \Cref{lemma:rangeB}) which is closed, therefore $\lambda \notin \sigma_{ap}(B)$ and the result follows.

    Finally, for item (v), let  $\lambda\in\C$ with $\Real\lambda<-1/p$. By the Banach isomorphism theorem \cite[Theorem 1.7.14]{Megginson1998}  and \Cref{lemma:rangeB} we have that
    \[
        \quotient{L^p[0,1]}{\ran(\lambda - B)} = \quotient{L^p[0,1]}{\ker L^\lambda} \cong \ran L^\lambda.
    \]
    Therefore, $\codim(\lambda - B) = \dim \ran L^\lambda = 1$, because $L^\lambda$ is the non-zero linear functional. Moreover, by \Cref{prop:point_spectrum} we have that $\dim \ker (\lambda - B) = 0,$ therefore $\lambda  \notin \sigma_{ess}(B)$. Finally, by \cite[Th. I.3.25]{EdmundsEvans1987} the elements which are accumulation points of the spectrum and the resolvent set at the same time belong to the essential spectrum, so item (v) follows.
\end{proof}

\section{Spectra of Koopman semigroups}
\label{sec:spectra_semigroups}

At \Cref*{sec:semigroups} we introduced the operator families $(T(t))_{t \geq 0}$ and $(S(t))_{t \geq 0}$ which are $C_0$-semigroups for $1 \leq p < \infty,$ and which have the operators $A$ and $B$ as infinitesimal generators respectively. In this section, we use the spectral study of the generators in \Cref*{sec:spectra_generators} in order to study the spectral properties of the $C_0$-semigroups $(T(t))_{t \geq 0}$ and $(S(t))_{t \geq 0}$ defined on $L^p[0,1]$ for $1 \leq p < \infty$. From now on, we denote by $B(c,r)$ the open ball with center $c$ and radius $r$. Also, from now on, the following subspaces \[
    M_{p,r} := \{ f \in L^p[0,1] \, :\, f(s) = 0 \text{ a.e. for } s \in (0,r) \}, \quad r \in (0,1),\, 1\leq p\leq \infty,
\] will be play a key role in some main results in this paper.

\begin{theorem}
    \label{th:fine_spectrum_T}
    Let $1\leq p \leq \infty$ and $t>0$. The bounded operator $T(t)$ satisfies that 
    \begin{enumerate}[(i)]
        \setlength\itemsep{0.5em}

        \item $\sigma(T(t)) = \overline{B(0,e^{-t/p'})}$.

        \item For $ 1 \leq p < \infty$, $\sigma_{point}(T(t))=B(0,e^{-t/p'})$, \\ for $p=\infty$, $\sigma_{point}(T(t))=\overline{B(0,e^{-t})}.$

        \item $\sigma_{ap}(T(t))=\overline{B(0,e^{-t/p'})}$.

        \item $\sigma_r(T(t))=\emptyset$ for $1\leq p<\infty.$
        
        \item $\sigma_{ess}(T(t))=\overline{B(0,e^{-t/p'})}$.
    \end{enumerate}
\end{theorem}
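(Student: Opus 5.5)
The plan is to transfer the fine spectrum of the generator $A$ (\Cref{th:fine_spectrum_A}) to $T(t)$ through the spectral mapping theorems for $C_0$-semigroups. These cover $1\le p<\infty$. The case $p=\infty$ needs separate, direct arguments, since the semigroup is no longer strongly continuous there.

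\textbf{Point spectrum, $1\le p<\infty$.} Use the point spectral mapping theorem \cite[Th. IV.3.7]{EngelNagel2000}:
\[
\sigma_{point}(T(t))\setminus\{0\}=e^{t\sigma_{point}(A)}.
\]
By \Cref{prop:point_spectrum}, $e^{t\sigma_{point}(A)}=\{\mu: 0<|\mu|<e^{-t/p'}\}$, since imaginary parts give all arguments. Separately we must check $0\in\sigma_{point}(T(t))$. Any nonzero $f$ supported in $(e^{-t},1)$ satisfies $T(t)f=0$, because $T(t)f(s)=e^{-t}f(e^{-t}s)$ only sees $f$ on $(0,e^{-t})$. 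Hence $\ker T(t)\supseteq M_{p,e^{-t}}$, which is infinite dimensional, and $\sigma_{point}(T(t))=B(0,e^{-t/p'})$.

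\textbf{Spectrum and the remaining parts, $1\le p<\infty$.} By \Cref{pr:boundedness_semigroup}, $r(T(t))\le \|T(t)\|=e^{-t/p'}$. Since the spectrum is closed and contains the point spectrum, $\sigma(T(t))=\overline{B(0,e^{-t/p'})}$, giving (i).
\begin{itemize}
\item For (iii): interior points are eigenvalues, and boundary points of the spectrum lie in $\sigma_{ap}$.
\item For (iv): for $|\mu|<e^{-t/p'}$, $\mu\neq0$, we want $\ran(\mu-T(t))$ dense. Use the residual spectral inclusion $\sigma_r(T(t))\setminus\{0\}\subseteq e^{t\sigma_r(A)}$ \cite[Th. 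IV.3.7]{EngelNagel2000}, which is empty by \Cref{th:fine_spectrum_A}(iv). On the circle $|\mu|=e^{-t/p'}$ use the same inclusion. For $\mu=0$ we need $\ran T(t)$ dense. Every $g$ in $L^p$ is $T(t)f$ with $f(u)=e^{t}g(e^{t}u)$ on $(0,e^{-t})$ and $f=0$ elsewhere, so $T(t)$ is in fact surjective.
\item For (v): $0$ has infinite-dimensional kernel. Each $\mu\in\sigma$ lies in the closure of the boundary or the interior, and the essential spectrum is closed. It therefore suffices to show each $0<|\mu|<e^{-t/p'}$ has infinite-dimensional eigenspace. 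Write $\mu=e^{t\lambda_k}$ with $\lambda_k=\lambda_0+2\pi ik/t$. All $\lambda_k$ satisfy $\Real\lambda_k<-1/p'$, so the eigenfunctions $g_{\lambda_k}=s^{-\lambda_k-1}$ are eigenvectors of $T(t)$ for $\mu$. They are linearly independent because their exponents differ. Hence $\sigma_{ess}=\overline{B(0,e^{-t/p'})}$.
\end{itemize}

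\textbf{The case $p=\infty$.} Here $\|T(t)\|=e^{-t}$, so $\sigma(T(t))\subseteq\overline{B(0,e^{-t})}$. For $|\mu|\le e^{-t}$ we take the eigenfunction $g_\lambda(s)=s^{-\lambda-1}$ with $\Real\lambda\ge-1$ and $e^{-t\lambda}$-type matching. We need $T(t)g_\lambda=e^{-t}e^{t(\lambda+1)}g_\lambda=e^{t\lambda}g_\lambda$, so choose $e^{t\lambda}=\mu$, giving $\Real\lambda=\log|\mu|/t\le -1$. Since $g_\lambda$ is bounded iff $\Real\lambda\le-1$, every $0<|\mu|\le e^{-t}$ is an eigenvalue, with infinitely many independent eigenvectors $g_{\lambda_k}$. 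The kernel again contains $M_{\infty,e^{-t}}$, so $0$ is an eigenvalue too. This yields (i), (ii), (iii) and (v) for $p=\infty$ at once.

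\textbf{Expected main obstacle.} The step I expect to be most delicate is (iv) on the boundary circle $|\mu|=e^{-t/p'}$. There the density of $\ran(\mu-T(t))$ must come from the residual spectral inclusion together with \Cref{lemma:rangeA_density} applied to every $\lambda_k$ with $e^{t\lambda_k}=\mu$. If that inclusion is unavailable in the needed form, I would argue by duality instead. I would show $\mu-S(t)$ is injective on $L^{p'}$ for $|\mu|=e^{-t/p'}$. Suppose $S(t)h=\mu h$. Then $h(e^t s)=\mu h(s)$ on $(0,e^{-t})$ and $h=0$ on $(e^{-t},1)$ for $\mu\ne0$. This forces $h=0$ on every interval $(e^{-(n+1)t},e^{-nt})$ by iteration, since $h(e^{-nt}x)=\mu^{-n}h(x)$.
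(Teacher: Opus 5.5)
Your proof is correct. For $1\le p<\infty$ it is essentially the paper's argument: the point/residual spectral mapping theorem \cite[Theorem IV.3.7]{EngelNagel2000} with $\sigma_{point}(A)$ and $\sigma_r(A)=\emptyset$, the kernel $M_{p,e^{-t}}$ and surjectivity of $T(t)$ at $\mu=0$, and the shifted eigenfunctions $g_{\lambda+2\pi ik/t}$. There are two harmless simplifications. You get (i) from the point spectrum and $\|T(t)\|=e^{-t/p'}$ rather than from the spectral inclusion theorem. You also place the boundary circle in $\sigma_{ess}$ by closedness of the (Fredholm) essential spectrum rather than via \cite[Th. I.3.25]{EdmundsEvans1987}.

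Your fallback for (iv) is a genuine alternative. Since $T(t)'=S(t)$, and your iteration in fact shows $\ker(\mu-S(t))=\{0\}$ for every $\mu\neq0$, density of $\ran(\mu-T(t))$ follows for all $\mu\neq0$. This uses neither the generator nor the range lemmas of \Cref{sec:spectra_generators}.

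The real divergence is at $p=\infty$. The paper obtains (i) and (ii) by duality from the fine spectrum of $S(t)$ on $L^1[0,1]$ (\Cref{th:fine_spectrum_S} plus \cite[Corollary B.12, Proposition IV.1.12]{EngelNagel2000}), and uses eigenfunctions only for (v). You instead get (i), (ii), (iii) and (v) at once from the bounded eigenfunctions $g_\lambda$, $\Real\lambda\le-1$, together with the norm bound. Your route is shorter and self-contained, since it does not need the fine spectrum of $S(t)$ on $L^1[0,1]$. The paper's route exhibits the $T$--$S$ duality that it reuses elsewhere.

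One small slip: in your $p=\infty$ paragraph the first condition should read $\Real\lambda\le-1$, not $\Real\lambda\ge-1$.
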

\begin{proof}
    First, we prove the results for the case when $1 \leq p < \infty$, that is, when $(T(t))_{t \geq 0}$ is a strongly continuous semigroup.

    For item (i), we have by \cite[Theorem IV.3.6]{EngelNagel2000} that
    \[
        e^{t \sigma(A)} = \overline{B(0,e^{-t/p'})} \setminus \{0\} \subseteq \sigma(T(t))
    \]
    and as the growth bound of $T(t)$ is $w_0 = -1/p'$, we have that $r(T(t)) = e^{-t/p'}.$ Then $\sigma(T(t)) \subseteq \overline{B(0,e^{-t/p'})}$ and taking closures we get the desired result.

    From \cite[Theorem IV.3.7]{EngelNagel2000}, we have that
    \[
        \sigma_{point}(T(t)) \setminus \{0\} = B(0,e^{-t/p'}) \setminus \{0\}.
    \]
    Moreover, $0 \in \sigma_{point}(T(t))$ and item (ii) follows. Indeed, take $f = \chi_{(e^{-t},1]}$, then $T(t)f(s) = 0$ with $s\in [0,1].$ 

    Analogously, we have by \cite[Theorem IV.3.7]{EngelNagel2000} that,
    \[
        \sigma_r(T(t)) \setminus \{0\} = \emptyset.
    \]
    Even more, let $f \in L^p[0,1]$ and define $g(s) = e^t \chi_{[0,e^{-t})}(s) f(e^t s)$. It's easy to check that under a change of variables $\norm{g}_p = \norm{f}_p <\infty$, and $T(t)g(s) = f(s)$, so $\ran(T(t)) = L^p[0,1]$.
    Therefore, the range is dense and $0 \notin \sigma_r(T(t))$ and item (iv) follows.

    For the approximate point spectrum we will use the fact that $\sigma(T(t)) = \sigma_{ap}(T(t)) \cup \sigma_{r}(T(t))$, then the result follows as $\sigma_{r}(T(t)) = \emptyset$. 

    Finally, for item (v), if $\mu \in \sigma(T(t)) \setminus \{ 0\}$ with
    $\mu = e ^{\lambda t}$ with $\lambda \in \sigma_{point}(A)$ we have that the function $\frac{1}{s^{\lambda+1}}$ is an eigenfunction for $\mu$. Thus, every eigenvalue $\mu$ has infinite eigenfunctions associated with it (with periodicity $2\pi$) and $\ker(\mu - T(t))$ is infinite dimensional so $\mu \in \sigma_{ess}(T(t))$. On the other hand, notice that $\ker(T(t)) =M_{p,{e^{-t}}}.$ Then $\dim(\ker(T(t))) = \infty $ and $0 \in \sigma_{ess}(T(t))$. Lastly, by \cite[Th. I.3.25]{EdmundsEvans1987} the elements which are accumulation points of $\sigma(T(t))$ and $\rho(T(t))$ at the same time belong to $\sigma_{ess}(T(t))$ and result follows.

    For $p=\infty$ we use the fact that the operator $T(t)$ defined on $L^\infty[0,1]$ is the dual operator of $S(t)$ defined on $L^1[0,1]$, therefore the results for items (i), (ii) follow from \Cref{th:fine_spectrum_S} below and \cite[Corollary B.12, Proposition IV.1.12]{EngelNagel2000}. Item (iii) follows since $\sigma_{point}(T(t))\subseteq \sigma_{ap}(T(t))\subseteq \sigma(T(t)).$  For item (v) we divide our study into 3 different cases. First the values $\lambda\in\C$ which are in the boundary of the spectrum, that is, $|\lambda|=e^{-t},$ because they are accumulation points of both the spectrum and resolvent set,  and therefore they belong to $\sigma_{ess}(T(t))$ (see \cite[Th. I.3.25]{EdmundsEvans1987}). Additionally, $0 \in \sigma_{ess}(T(t))$ because $\ker(T(t)) = M_{\infty,e^{-t}}$ which is infinite dimensional. Lastly, take $\lambda \in B(0,e^{-t}) \setminus \{ 0 \},$ then $\lambda = e^{\mu t}$ with $\mu \in \C$ with $\Real \mu< -1$. Although in this case $T(t)$ doesn't define a strongly continuous semigroup, we still have that for $n \in \Z$ the functions $g_{\lambda,n}(s) := \frac{1}{s^{\mu + \frac{2\pi i n}{t} + 1}}\in L^\infty[0,1]$ are eigenfunctions for the eigenvalue $\lambda$. Therefore, $\lambda \in \sigma_{ess}(T(t))$ since $\dim \ker (\lambda - T(t)) = \infty$. 
\end{proof}

\begin{theorem}
    \label{th:fine_spectrum_S}
    Let $1\leq p \leq \infty$ and $t>0$. The bounded operator $S(t)$ satisfies that 
    \begin{enumerate}[(i)]
        \setlength\itemsep{0.5em}

        \item $\sigma(S(t)) = \overline{B(0,e^{-t/p})}$.

        \item $\sigma_{point}(S(t))=\emptyset$. 

        \item $\sigma_{ap}(S(t))= \delta B(0,e^{-t/p})$.

        \item For $p=1$, $ \sigma_r(S(t))=\overline{B(0,e^{-t})},$ \\ for $1 < p < \infty$, $\sigma_r(S(t))=B(0,e^{-t/p}),$ \\ and for $ p = \infty$, $B(0,e^{-t/p})\subseteq \sigma_r(S(t)).$
        
        \item $\sigma_{ess}(S(t))= \overline{B(0,e^{-t/p})}$.
    \end{enumerate}
\end{theorem}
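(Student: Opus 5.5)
We split into $1\le p<\infty$, where $(S(t))_{t\ge0}$ is a $C_0$-semigroup with generator $B$, and $p=\infty$, where we use duality with $T(t)$ on $L^1[0,1]$.

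\textbf{The case $1\le p<\infty$.}

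For (i), the growth bound $-1/p$ from \Cref{th:semigroups} gives $r(S(t))=e^{-t/p}$. The spectral inclusion $e^{t\sigma(B)}\subseteq\sigma(S(t))$ \cite[Theorem IV.3.6]{EngelNagel2000}, together with \Cref{th:fine_spectrum_B}(i), gives $\overline{B(0,e^{-t/p})}\setminus\{0\}\subseteq\sigma(S(t))$. Since the spectrum is closed, (i) follows.

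For (ii), the point spectral mapping $\sigma_{point}(S(t))\setminus\{0\}=e^{t\sigma_{point}(B)}$ \cite[Theorem IV.3.7]{EngelNagel2000} and $\sigma_{point}(B)=\emptyset$ reduce the question to injectivity at $0$. This is direct: $\norm{S(t)f}_p=e^{-t/p}\norm{f}_p$, so $S(t)$ is injective.

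For (iv), the same theorem gives $\sigma_r(S(t))\setminus\{0\}=e^{t\sigma_r(B)}$, which \Cref{th:fine_spectrum_B}(iv) makes explicit.
\begin{itemize}
\item For $p>1$ this is $B(0,e^{-t/p})\setminus\{0\}$.
\item For $p=1$ it is $\overline{B(0,e^{-t})}\setminus\{0\}$, since $\Real\lambda\le-1$ maps onto the closed punctured disc.
\item At $0$, every $S(t)f$ vanishes on $(e^{-t},1)$. Hence $\ran S(t)\subseteq\{g: g=0 \text{ a.e. on }(e^{-t},1)\}$, a proper closed subspace, so $0\in\sigma_r(S(t))$.
\end{itemize}

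For (iii), $S(t)$ is bounded below because $\norm{S(t)f}_p=e^{-t/p}\norm{f}_p$. The same holds for $\mu-S(t)$ when $\abs{\mu}<e^{-t/p}$: by the triangle inequality,
\[
\norm{(\mu-S(t))f}_p\ge (e^{-t/p}-\abs{\mu})\norm{f}_p .
\]
So $\sigma_{ap}(S(t))$ avoids the open disc. The boundary of the spectrum always lies in the approximate point spectrum, so $\sigma_{ap}(S(t))=\partial B(0,e^{-t/p})$.

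For (v), boundary points are accumulation points of both $\sigma$ and $\rho$, hence essential by \cite[Th. I.3.25]{EdmundsEvans1987}. Interior points need the codimension of the range.
\begin{itemize}
\item For $\mu=0$, the range is $J^{-1}$-like supported on $[0,e^{-t})$. The quotient $L^p[0,1]/\ran S(t)$ is isomorphic to $L^p(e^{-t},1)$, which is infinite dimensional.
\item For $0<\abs{\mu}<e^{-t/p}$, write $\mu=e^{\lambda t}$ with $\Real\lambda<-1/p$. Each of the countably many $\lambda_n=\lambda+2\pi i n/t$ gives a distinct functional $L^{\lambda_n}$ vanishing on $\ran(\lambda_n-B)$. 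Via $\mu-S(t)=(\lambda_n-B)\int_0^t e^{\lambda_n(t-s)}S(s)\,ds$, the range of $\mu-S(t)$ is contained in $\bigcap_n\ker L^{\lambda_n}$.
\end{itemize}
The functionals $L^{\lambda_n}$ are linearly independent, since their kernels $u^{-\lambda_n-1}$ are distinct powers. Hence the codimension is infinite. I expect this last step, showing infinite codimension rather than codimension one, to be the main obstacle; the factorization identity above is what I would try first. Alternatively, one can argue by duality: $\ker(\bar\mu-T(t))$ on $L^{p'}$ contains all $s^{-\lambda_n-1}$, which lie in $L^{p'}$ precisely when $\Real\lambda<-1/p$.

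\textbf{The case $p=\infty$.}

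Here $S(t)=T(t)'$ with $T(t)$ acting on $L^1[0,1]$. Item (i) follows since a bounded operator and its adjoint have the same spectrum, and \Cref{th:fine_spectrum_T} gives $\sigma(T(t))=\overline{B(0,1)}$ when $p'=\infty$.

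Items (ii) and (iii) follow from the norm identity $\norm{S(t)f}_\infty=\norm{f}_\infty$, using the same bounded-below argument as before.

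For (iv), $\ran(\mu-S(t))$ is not dense whenever $\bar\mu$, or $\mu$ in the bilinear pairing, is an eigenvalue of the predual operator $T(t)$ on $L^1$. This holds for $\abs{\mu}<1$. Finally, (v) follows as in the previous case: boundary points are accumulation points, and interior points have infinite codimension of range by the same eigenfunction-counting argument on $L^1$.
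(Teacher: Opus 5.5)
\paragraph{Gap: item (ii) for $p=\infty$ on the unit circle.} For $1\le p<\infty$ your argument is sound. For $p=\infty$, however, item (ii) is not established. You say it follows from $\norm{S(t)f}_\infty=\norm{f}_\infty$ by the bounded-below estimate. That estimate is $\norm{(\mu-S(t))f}_\infty\ge(1-\abs{\mu})\norm{f}_\infty$, and it only excludes eigenvalues with $\abs{\mu}<1$; it says nothing when $\abs{\mu}=1$. On $L^\infty[0,1]$ you also cannot fall back on the point spectral mapping theorem \cite[Theorem IV.3.7]{EngelNagel2000}, because $(S(t))_{t\ge 0}$ is not strongly continuous there. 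So eigenvalues on the unit circle are not ruled out by what you wrote.

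There are two easy repairs:
\begin{enumerate}[(a)]
\item Use duality, as the paper does. Since $S(t)=T(t)'$ with $T(t)$ acting on $L^1[0,1]$, you get $\sigma_{point}(S(t))=\sigma_r(T(t))$ by \cite[Proposition IV.1.12]{EngelNagel2000}. This set is empty by \Cref{th:fine_spectrum_T}(iv) with $p=1$.
\item Argue directly. If $S(t)f=\mu f$ with $\mu\neq 0$, then $f=0$ a.e.\ on $(e^{-t},1)$. The relation $\mu f(s)=f(e^{t}s)$ for $s\in(0,e^{-t})$ then propagates this to each interval $(e^{-(k+1)t},e^{-kt})$, so $f=0$ a.e. This works for every $p$.
\end{enumerate}

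\paragraph{Where your route differs from the paper's.} Your arguments for (iii) and (v) are correct and differ from the paper's.

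For (iii), the paper argues by contradiction. It turns an approximate eigenvector sequence of $S(t)$ at $\lambda$ into one for $T(t)$ at $e^{-t}/\lambda$, a point outside $\sigma_{ap}(T(t))$. You instead use that $e^{t/p}S(t)$ is an isometry, so $\mu-S(t)$ is bounded below for $\abs{\mu}<e^{-t/p}$. This is shorter and treats $\mu=0$ and $p=\infty$ in the same stroke.

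For (v), the paper simply cites duality of the essential spectrum \cite[Th. IX.1.1]{EdmundsEvans1987} together with \Cref{th:fine_spectrum_T}(v). You verify the definition ($\dim\ker=\infty$ or $\codim\ran=\infty$) directly. The factorization $\mu-S(t)=(\lambda_n-B)\int_0^t e^{\lambda_n(t-s)}S(s)\,ds$ and \Cref{lemma:rangeB} put $\ran(\mu-S(t))$ inside $\bigcap_n\ker L^{\lambda_n}$. Linear independence of the functionals $L^{\lambda_n}$ then forces infinite codimension. Your route is more self-contained and shows explicitly where the infinite codimension comes from. The paper's route is a one-line consequence of the results already proved for $T(t)$.
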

\begin{proof}
    As in the last theorem, we prove the results for the case when $1 \leq p < \infty$, that is, when $(S(t))_{t \geq 0}$ is a strongly continuous semigroup.

    Item (i) follows the same idea as in \Cref{th:fine_spectrum_T}, and for item (ii) we use \cite[Theorem IV.3.7]{EngelNagel2000} for the point spectrum and a direct computation shows that $S(t)$ is injective.

    For the residual spectrum, we have that, if $p=1$
    \[
        \sigma_r(S(t)) \setminus \{0\} = \overline{B(0,e^{-t/p})} \setminus \{0\},
    \]
    and for $p>1$,
    \[
        \sigma_r(S(t)) \setminus \{0\} = B(0,e^{-t/p}) \setminus \{0\}.
    \]
    Moreover, notice that $0 \in \sigma_r(S(t))$ in both cases ($p=1$ and $p>1$) since
    $\ran(S(t))= M_{p,e^{-t}}^{\perp}=\{f \in L^p(0,1): f(x) = 0 \text{ a.e. for } x \in (e^{-t},1) \},$ which is a closed subspace of $L^p[0,1]$ and therefore not dense.

    Next, we prove item (iii) using the characterization of the approximate point spectrum via sequences (see \cite[Lemma IV.1.9]{EngelNagel2000}). Take $\lambda \in B(0,e^{-t/p})\setminus \{0\}$ and suppose $\lambda \in \sigma_{ap}(S(t))$, then there exists a sequence $(f_n)_{n \in \N} \subset L^p[0,1]$, such that $\norm{f_n}_p=1$ and $\lim_{n \to \infty} \norm{S(t)f_n-\lambda f_n}_p = 0$. Therefore,
    \[
        \norm{S(t)f_n-\lambda f_n}_p^p = \int_0^{e^{-t}} \abs{f_n(e^t u) - \lambda f_n(u)}^p \, du + \int_{e^{-t}}^1 \abs{\lambda f_n(u)}^p \, du \to 0
    \]
    as $n \to \infty$. As both integrals are positive this means that each of them go to zero. However, doing a change of variables the first integral becomes
    \[
        \begin{aligned}
            \int_0^{e^{-t}} \abs{f_n(e^t u) - \lambda f_n(u)}^p \, du & = \int_0^1 \abs{f_n(x) - \lambda f_n(e^{-t}x)}^p e^{-t} \, dx \\ & = \abs{\lambda}^p e^{(p-1)t} \int_0^1 \abs{T(t)f_n(x) - \frac{e^{-t}}{\lambda} f_n(x)}^p  \, dx \to 0.
        \end{aligned}
    \]
    This implies that the value $\frac{e^{-t}}{\lambda} \in \sigma_{ap}(T(t))$ on the space $L^p[0,1]$. But, notice that $\abs{\frac{e^{-t}}{\lambda}} > e^{-t} e^{t/p} = e^{-t/p'}$ as $\lambda \in B(0,e^{-t/p})\setminus \{0\}$, which is a contradiction as $\sigma_{ap}(T(t)) = \overline{B(0,e^{-t/p'})}$. The fact that $0 \notin \sigma_{ap}(S(t))$ follows from the fact that $\ran(S(t)) = \{f \in L^p[0,1] \, : \, f(s)=0 \text{ a.e. for } s \in (e^{-t},1)  \}$ is closed. This case finishes with the fact that the boundary of the spectrum is always included in the approximate point spectrum.

    For item (v) we use the duality of the operators $T(t)$ and $S(t)$ and the properties of the essential spectrum and duality, see \cite[Th. IX.1.1]{EdmundsEvans1987}.
      
    The case $p=\infty$. The operator $S(t)$ defined on $L^\infty[0,1]$ is the dual of $T(t)$ defined on $L^1[0,1]$, therefore the results for items (i) and (ii) follow from \Cref{th:fine_spectrum_T} and \cite[Corollary B.12, Proposition IV.1.12]{EngelNagel2000}. Also, by the last paragraph we have also the result for item (v). The proof for the approximate spectrum is similar that for the case $1 \leq p < \infty$ using the $\|\cdot\|_{\infty}$ norm. Finally, item (iv) follows by  $\sigma(S(t)) = \sigma_{ap}(S(t)) \cup \sigma_{r}(S(t)).$
\end{proof}

\section{The fine spectra of the generalized Cesàro-Hardy operators}
\label{sec:spectra_cesaro}

In this section, we apply the results obtained in the preceding sections to study the boundedness and the spectrum on $L^p[0,1]$ of the Cesàro-Hardy operator $\Ca$ and its dual $\Ca^*$. In order to do so, we will use the theory of the natural functional calculus for sectorial operators defined by Haase in \cite{Haase2006}, but first we must fix some notation.

The open sector of angle $\omega$ is defined as $S_\omega:= \{ z \in \C \, : \, z \neq 0 \text{ and } \abs{\arg z} < \omega\}$, for $0< \omega \leq \pi$ and $S_0:=(0,\infty)$. Let $0 \leq \omega < \pi$, an operator $\mathcal{A}$ on a Banach space $X$ is called sectorial of angle $\omega$ if $\sigma(\mathcal{A})\subset \overline{S_\omega}$ and $\sup \{ \norm{\lambda (\lambda - \mathcal{A})^{-1}}  \, : \, \lambda \notin \overline{S_{\omega'}}\} < \infty$ for all $\omega < \omega' < \pi$. In particular if $-\mathcal{A}$ generates a uniformly bounded semigroup $(\mathcal{T}(t))_{t \geq 0}$, i.e. $\norm{\mathcal{T}(t)} \leq C$ for some positive constant $C$, $\mathcal{A}$ is sectorial of angle $\pi /2$ by the Hille-Yosida Theorem (\cite[Corollary II.3.6]{EngelNagel2000}). Now we are prepared to prove the results for Cesàro-Hardy operators.

Let $\Real \alpha > 0$, we consider the Cesàro-Hardy operator of order $\alpha$ given by,
\[
    \Ca f (s) = \frac{\alpha}{s^\alpha} \int_0^s (s-u)^{\alpha-1} f(u) \, du, \quad s \in (0,1),
\]
and the dual Cesàro-Hardy operator of order $\alpha$ given by
\[
    \Ca^* f (s) = \alpha \int_s^1 \frac{(u-s)^{\alpha-1}}{u^\alpha} f(u) \, du, \quad s \in (0,1).
\]

\begin{theorem}
    \label{th:cesaro_boundedness}
    Let $\Real \alpha > 0$. Then:
    \begin{enumerate}[(i)]
        \item The operator $\Ca$ is a bounded operator on $L^p[0,1]$, for $1 < p \leq \infty$.
        \item The operator $\Ca^*$ is a bounded operator on $L^p[0,1]$, for $1 \leq p < \infty$. 
    \end{enumerate}
    In addition, for $f \in L^p[0,1]$ the following subordination identities hold,
    \[
        \Ca f (s) = \alpha \int_0^\infty (1-e^{-t})^{\alpha-1}T(t)f(s) \, dt, \quad s \in [0,1],\, 1 < p \leq \infty,
    \]
    and
    \[
        \Ca^* f (s) = \alpha \int_0^\infty (1-e^{-t})^{\alpha-1}S(t)f(s) \, dt, \quad s \in [0,1],\, 1 \leq p < \infty.
    \]
\end{theorem}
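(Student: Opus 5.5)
The plan is to obtain both boundedness statements directly from the subordination formulas, treating them as Bochner integrals and using the norm bounds of \Cref{pr:boundedness_semigroup}. The pointwise identities themselves are already available: the introduction derived them by the changes of variable $u=e^{-t}s$ for $\Ca$ and $u=e^{t}s$ for $\Ca^*$. Rewriting the integrands via \eqref{eq:semigroups} gives
\[
    \Ca f(s)=\alpha\int_0^\infty (1-e^{-t})^{\alpha-1}(T(t)f)(s)\,dt, \qquad \Ca^* f(s)=\alpha\int_0^\infty (1-e^{-t})^{\alpha-1}(S(t)f)(s)\,dt .
\]
These hold for a.e.\ $s$ whenever the integrals converge absolutely, which Tonelli/Fubini will confirm below.

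For the norm estimate I use the fact that $|(1-e^{-t})^{\alpha-1}| = (1-e^{-t})^{\Real\alpha-1}$. Combining this with $\norm{T(t)}=e^{-t/p'}$ and $\norm{S(t)}=e^{-t/p}$ gives
\[
    \norm{\Ca f}_p\le |\alpha|\,\norm{f}_p\int_0^\infty (1-e^{-t})^{\Real\alpha-1}e^{-t/p'}\,dt ,
\]
and the analogous bound for $\Ca^*$ with $e^{-t/p}$ in place of $e^{-t/p'}$. Near $t=0$ the factor $(1-e^{-t})^{\Real\alpha-1}\sim t^{\Real\alpha-1}$ is integrable because $\Real\alpha>0$. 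At infinity we need $1/p'>0$, i.e.\ $p>1$, for $\Ca$, and $1/p>0$, i.e.\ $p<\infty$, for $\Ca^*$. These are exactly the ranges in the statement. The substitution $x=e^{-t}$ turns the integral into a Beta function, $B(\Real\alpha,1/p')$ or $B(\Real\alpha,1/p)$, so the constant is explicit.

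The main obstacle is justifying the Minkowski integral inequality, and hence the interchange of the $t$-integral with the $L^p$ norm, in the cases where $t\mapsto T(t)f$ is not strongly continuous, namely $p=\infty$ for $\Ca$. For $1<p<\infty$ and $1\le p<\infty$ respectively, \Cref{th:semigroups} gives strong continuity. The integrals are then Bochner integrals in $L^p$, and evaluating them pointwise agrees a.e.\ with the scalar integrals, as in the proof of \Cref{prop:integral_resolvent}. For $p=\infty$ I would avoid Bochner integration entirely and estimate pointwise: $|(T(t)f)(s)|\le e^{-t}\norm{f}_\infty$ for a.e.\ $s$ and every $t$, and the measurability of $(t,s)\mapsto f(e^{-t}s)$ lets me apply Tonelli. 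This yields $\norm{\Ca f}_\infty\le |\alpha|B(\Real\alpha,1)\norm{f}_\infty$, with the identity holding pointwise a.e. Alternatively, the $p=\infty$ case of $\Ca$ follows by duality from the $p=1$ case of $\Ca^*$, using \Cref{remark:duality} and Fubini to check that $\Ca$ on $L^\infty$ is the adjoint of $\Ca^*$ on $L^1$.
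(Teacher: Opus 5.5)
Your proposal is correct and follows essentially the same route as the paper: change variables ($u=e^{-t}s$, $u=e^{t}s$) to get the subordination formulas, then bound the norm by $\abs{\alpha}\norm{f}_p\int_0^\infty(1-e^{-t})^{\Real\alpha-1}e^{-t/p'}\,dt$ (resp.\ with $e^{-t/p}$) via \Cref{pr:boundedness_semigroup}, and evaluate the Beta integral. Your extra justification for $p=\infty$ (pointwise Tonelli estimate, or duality with $\Ca^*$ on $L^1$) usefully covers a case the paper passes over silently, since $(T(t))_{t\ge0}$ is not strongly continuous there.
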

\begin{proof}
    Let $\Real \alpha > 0$ and $f \in L^p[0,1]$. Then, the change of variable $u = e^{-t}s$ yields,
    \[
        \Ca f (s) = \frac{\alpha}{s^\alpha} \int_0^s (s-u)^{\alpha-1} f(u) \, du = \alpha \int_0^\infty (1-e^{-t})^{\alpha-1}T(t)f(s) \, dt,
    \]
    and, the change of variable $u = e^{t}s$ gives,
    \[
        \Ca^* f (s) = \alpha \int_s^1 \frac{(u-s)^{\alpha-1}}{u^\alpha} f(u) \, du = \alpha \int_0^\infty (1-e^{-t})^{\alpha-1}S(t)f(s) \, dt.
    \]
    Using the boundedness of the operators $T(t)$ and $S(t)$ we get
    \[
        \norm{\Ca f}_{p} \leq \abs{\alpha} \norm{f}_p \int_0^\infty (1-e^{-t})^{\Real \alpha-1} e^{-t / p'} \, dt = \abs{\alpha} \norm{f}_p \frac{\Gamma(\Real \alpha) \Gamma(1 / p')}{\Gamma(\Real \alpha + 1/p')}, \quad 1 < p \leq \infty,
    \]
    and
    \[
        \norm{\Ca^* f}_{p} \leq \abs{\alpha} \norm{f}_p \int_0^\infty (1-e^{-t})^{\Real \alpha-1} e^{-t / p} \, dt = \abs{\alpha} \norm{f}_p \frac{\Gamma(\Real \alpha) \Gamma(1 / p)}{\Gamma(\Real \alpha + 1/p)}, \quad 1 \leq p < \infty. \qedhere
    \]
\end{proof}

The following remark outlines the duality between the generalized Cesàro-Hardy operators. The proof is straightforward and is therefore omitted.

\begin{remark}
    \label{remark:duality_cesaro}
    Let $\Real \alpha > 0$. The Cesàro-Hardy operators $\Ca$ and $\Ca^*$ show a dual relationship as follows:
    \begin{itemize}
        \item For $1 \leq p < \infty$, $\Ca^*$ on $L^{p'}[0,1]$ is the dual operator of $\Ca$ on $L^p[0,1]$.
        \item For $1 < p \leq \infty$, $\Ca$ on $L^p[0,1]$ is the dual operator of $\Ca^*$ on $L^{p'}[0,1]$.
    \end{itemize}
\end{remark}

\begin{theorem}
    \label{th:spectrum_C*}
    Let $\Real \alpha > 0$ and $1 \leq p < \infty$. Then the operator $\Ca^* : L^p[0,1] \rightarrow L^p[0,1]$ satisfies
    \begin{enumerate}[(i)]
        \setlength\itemsep{0.5em}
        \item $ \displaystyle \sigma(\Ca^*) =  \left\{ \frac{\Gamma(\alpha+1)\Gamma(z+\frac{1}{p})}{\Gamma(\alpha+ z + \frac{1}{p})} \, : \, z \in \overline{\C_+}\right\} \cup \{0\}.$
        \item $\displaystyle \sigma_{point}(\Ca^*) = \emptyset.$
        \item $\displaystyle \left\{ \frac{\Gamma(\alpha+1)\Gamma(z+\frac{1}{p})}{\Gamma(\alpha+ z + \frac{1}{p})} \, : \, z \in i\R \right\} \cup \{0\} \subset \sigma_{ap}(\Ca^*)$.
        \item For $p=1$, $\displaystyle  \sigma_{r}(\Ca^*) = \left\{ \frac{\Gamma(\alpha+1)\Gamma(z+1)}{\Gamma(\alpha+ z + 1)} \, : \, z \in \overline{\C_+}\right\}$, \\ \phantom{a} \\ and for $1 < p < \infty$, $ \displaystyle \sigma_{r}(\Ca^*) =  \left\{ \frac{\Gamma(\alpha+1)\Gamma(z+\frac{1}{p})}{\Gamma(\alpha+ z + \frac{1}{p})} \, : \, z \in \C_+\right\}$.
        \item $\displaystyle \sigma_{ess}(\Ca^*) = \left\{ \frac{\Gamma(\alpha+1)\Gamma(z+\frac{1}{p})}{\Gamma(\alpha+ z + \frac{1}{p})} \, : \, z \in i\R \right\} \cup \{0\}.$
    \end{enumerate}
\end{theorem}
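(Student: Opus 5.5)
We will write $\Ca^*$ as a function of the generator $B$ of $(S(t))_{t\ge0}$ and then transfer the fine spectrum of $B$ (\Cref{th:fine_spectrum_B}) through a spectral mapping argument.

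\textbf{Step 1: functional calculus representation.} Put $\mathcal{A}:=-B-\frac1p$. Then $-\mathcal{A}$ generates the contraction semigroup $e^{t/p}S(t)$, which is bounded by \Cref{pr:boundedness_semigroup}. Hence $\mathcal{A}$ is sectorial of angle $\pi/2$, and $\sigma(\mathcal{A})=\overline{\C_+}$ by \Cref{th:fine_spectrum_B}(i). By \Cref{th:cesaro_boundedness},
\[
\Ca^*=\alpha\int_0^\infty(1-e^{-t})^{\alpha-1}e^{-t/p}\,e^{-t\mathcal{A}}\,dt .
\]
This is the Hille--Phillips functional calculus applied to the Laplace transform of the bounded measure $\alpha(1-e^{-t})^{\alpha-1}e^{-t/p}\,dt$. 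That transform is the Beta integral
\[
\varphi(z)=\alpha\int_0^\infty(1-e^{-t})^{\alpha-1}e^{-t(z+1/p)}\,dt=\frac{\Gamma(\alpha+1)\Gamma(z+\frac1p)}{\Gamma(\alpha+z+\frac1p)},
\]
so $\Ca^*=\varphi(\mathcal{A})$, in agreement with Haase's natural calculus.

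\textbf{Step 2: spectrum.} Apply the spectral mapping theorem for the sectorial (extended Hille--Phillips) calculus, \cite{Haase2006}:
\[
\sigma(\varphi(\mathcal{A}))=\varphi(\sigma(\mathcal{A}))\cup\{\lim_{z\to\infty}\varphi(z)\}.
\]
Using Stirling's formula, $\varphi(z)\sim\Gamma(\alpha+1)z^{-\alpha}\to0$ as $z\to\infty$ in $\overline{\C_+}$, because $\Real\alpha>0$. This produces the extra point $0$ and gives (i). Alternatively, one can use the inclusion $\varphi(\sigma)\subseteq\sigma$ together with the explicit resolvent formula for $\mu\notin\varphi(\overline{\C_+})\cup\{0\}$.

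\textbf{Step 3: point, approximate and residual spectra.} For each $z$ with $\varphi(z)=\mu$, define $\psi(w)=(\varphi(w)-\mu)/(w-z)$, which is holomorphic and bounded on a sector. Then
\[
\varphi(\mathcal{A})-\mu=\psi(\mathcal{A})(\mathcal{A}-z)=(\mathcal{A}-z)\psi(\mathcal{A}).
\]
The equation $\varphi(w)=\mu$ has only finitely many solutions in $\overline{\C_+}$, since $\varphi\to0$ and $\mu\neq0$. Factoring all of them out, we obtain $\varphi(\mathcal{A})-\mu=\prod_j(\mathcal{A}-z_j)\,\tilde\psi(\mathcal{A})$ with $\tilde\psi(\mathcal{A})$ invertible. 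This identity transfers properties of $\mathcal{A}$ to $\varphi(\mathcal{A})$:
\begin{itemize}
\item Injectivity. Since $\sigma_{point}(B)=\emptyset$, the operator $\varphi(\mathcal{A})-\mu$ is injective for $\mu\ne0$. For $\mu=0$, injectivity follows from the injectivity of $\Ca^*$, which holds because $(\Ca^*)'=\Ca$ has dense range; alternatively, differentiate the Volterra-type identity. This gives (ii).
\item Residual spectrum. The range of the product is dense exactly when each factor $\mathcal{A}-z_j$ has dense range. By \Cref{th:fine_spectrum_B}(iv), this gives (iv), with the $p=1$ versus $p>1$ dichotomy inherited from $B$. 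We also check that $0\notin\sigma_r$: the range of $\Ca^*$ is dense by duality, since $\Ca$ is injective.
\item Approximate spectrum. This follows from the boundary inclusion $\partial\sigma\subseteq\sigma_{ap}$ and the fact that $\varphi(i\R)\cup\{0\}$ contains the topological boundary, combined with the factorization and \Cref{th:fine_spectrum_B}(iii). This gives (iii).
\end{itemize}

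\textbf{Step 4: essential spectrum.} Fredholmness is preserved under composition with invertible operators, and a finite product of Fredholm operators is Fredholm. For $z_j\in\C_+$, the operator $\mathcal{A}-z_j$ is Fredholm of index $-1$ by \Cref{th:fine_spectrum_B}(v). So $\mu\notin\sigma_{ess}$ whenever all preimages $z_j$ lie in the open half-plane. If some $z_j\in i\R$, then that factor is not Fredholm; a product in which the other factors are Fredholm is then not Fredholm either. For $0$ we argue by accumulation: it is a limit of spectral points and of non-spectral points, and \cite[Th. I.3.25]{EdmundsEvans1987} applies. This gives (v).

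\textbf{Main obstacle.} The hard part is Step 2: justifying the spectral mapping theorem, including the point at infinity, for Haase's calculus, and checking that $\varphi$ lies in the appropriate algebra, namely bounded and holomorphic on a sector of angle larger than $\pi/2$. Stirling's bound for complex $\alpha$ is needed there. A secondary difficulty is the careful handling of $\mu$ whose preimage set mixes the boundary line $i\R$ with interior points.
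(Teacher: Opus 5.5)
Your route after (i) differs from the paper's. The paper gets (ii) and (v) from Oliva-Maza's spectral mapping theorems for the point and essential spectrum (which include the value $\varphi(\infty)=0$), and gets (iv) by duality from $\sigma_{point}(\Ca)$ on $L^{p'}$. You instead transfer the fine spectrum of $B$ through a factorization in the functional calculus. That idea is sound, and for (iii) it even reaches points of $\varphi(i\R)$ lying in the interior of $\sigma(\Ca^*)$, which a boundary argument cannot. There are, however, two concrete gaps.

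\textbf{The factorization as stated is false.} Since $\varphi(w)\to 0$, you have $\tilde\psi(w)=(\varphi(w)-\mu)/\prod_j(w-z_j)\sim-\mu w^{-k}$ at infinity, where $k$ is the number of preimages counted with multiplicity. So $\tilde\psi(\mathcal{A})$ is injective with range $D(\mathcal{A}^k)\neq L^p[0,1]$. Indeed, if it were invertible, $\prod_j(\mathcal{A}-z_j)$ would be bounded.

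Repair it with bounded factors: $\varphi(\mathcal{A})-\mu=\prod_j\bigl[(\mathcal{A}-z_j)(1+\mathcal{A})^{-1}\bigr]g(\mathcal{A})$, where $g(w)=(\varphi(w)-\mu)\prod_j\frac{1+w}{w-z_j}$.
\begin{itemize}
\item $g(\infty)=-\mu\neq0$.
\item Choose $\omega'\in(\pi/2,\pi)$ so that $S_{\omega'}$ contains no zero of $\varphi-\mu$ outside $\overline{\C_+}$. This is possible because there are finitely many such zeros in bounded sets and none near $\infty$.
\item Then $g$ is bounded and bounded away from $0$ on $S_{\omega'}$, so $g(\mathcal{A})$ is invertible.
\end{itemize}
Since $(1+\mathcal{A})^{-1}$ is an isomorphism onto $D(\mathcal{A})$ with the graph norm, each bounded factor has the same kernel, range, closedness of range and Fredholm property as $\mathcal{A}-z_j$. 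Your transfer arguments then work for commuting bounded operators: a product has dense range iff each factor does; the product fails to be bounded below if one factor does; and if $ST=TS$ is Fredholm, then $S$ and $T$ are Fredholm.

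\textbf{The argument at $\mu=0$ in Step 4 fails in general.} The point $0$ need not be a limit of resolvent points, so the Edmunds--Evans accumulation criterion can fail there. The same goes for using $\partial\sigma\subseteq\sigma_{ap}$ to place $0$ in $\sigma_{ap}$ in (iii).

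Take $\alpha=3$, so $\varphi(z)=6/\bigl((z+\frac1p)(z+1+\frac1p)(z+2+\frac1p)\bigr)$. With $u=z+1+\frac1p$, the equation $\varphi(z)=w$ becomes $u^3-u=6/w$. For small $w\neq0$, one root lies within $O(\abs{w}^{1/3})$ of a cube root of $6/w$ whose argument is in $[-\pi/3,\pi/3]$. Hence $\Real u\to+\infty$ and $\Real z>0$. So a punctured disc around $0$ lies in $\varphi(\C_+)$, and $0$ is an interior point of $\sigma(\Ca^*)$.

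The fix uses what you already have. $\Ca^*$ is injective with dense range and $0\in\sigma(\Ca^*)$, so $\ran\Ca^*$ is not closed. A bounded operator with finite-codimensional range has closed range, so $\ran\Ca^*$ has infinite codimension. This gives both $0\in\sigma_{ess}(\Ca^*)$ and $0\in\sigma_{ap}(\Ca^*)$.

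You should also justify the density of $\ran\Ca$ that you invoke for injectivity at $0$. One way: $\Ca s^n=\frac{\Gamma(\alpha+1)\Gamma(n+1)}{\Gamma(\alpha+n+1)}s^n$, so every polynomial lies in $\ran\Ca$. Since $\ker\Ca^*$ is the pre-annihilator of $\ran\Ca$, this gives $\ker\Ca^*=\{0\}$, including the case $p=1$.
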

\begin{proof}

    For $1 \leq p < \infty$, the family $(e^{t/p} S(t))_{t \geq 0}$ forms a uniformly bounded $C_0$-semigroup on $L^p[0,1]$ with generator $(B + 1/p, D(B))$ (see \Cref{th:semigroups}). On the other hand, let $\tilde{\mu}$ be the finite Borel measure on $[0,\infty)$ and absolutely continuous with respect to the Lebesgue measure, defined by $d\tilde{\mu}(t) = \alpha e^{-t/p} (1-e^{-t})^{\alpha-1} dt$. Notice that the Laplace transform of the measure $\tilde{\mu}$ is,
    \[
        \LL(\tilde{\mu})(z):= \int_{[0,\infty)} e^{-zt} \mu(dt) = \int_0^\infty \alpha e^{-t(z+1/p)} (1-e^{-t})^{\alpha-1} dt = \frac{\Gamma(\alpha + 1)\Gamma(z+ 1/p)}{\Gamma(\alpha + z + 1/p)},
    \]
    for $\Real z \geq 0$ and $\LL(\tilde{\mu})$ is a bounded holomorphic function on $S_{\frac{\pi}{2}}$ and continuous on $\overline{S_{\frac{\pi}{2}}}$. Moreover, the function $\LL(\tilde{\mu})$ can be extended in a holomorphic way to $S_{\varphi}$, with $\varphi \in \left(\frac{\pi}{2}, \pi \right)$. This is because $\Gamma$ is a meromorphic function with poles in $\{0, -1,-2, \dots\}$ and $1/\Gamma$ is entire, so the poles of $\LL(\tilde{\mu})$ are at $z \in \{ -1/p, -1 - 1/p, -2-1/p, \dots\}$. Even more, we can also extend it in a holomorphic way to the shifted sector $S_\varphi - 1/p = \{\lambda - 1/ p \, : \, \lambda \in S_\varphi \}$.

    In addition, as $\LL(\tilde{\mu})(0) = \frac{\Gamma(\alpha + 1)\Gamma(1/p)}{\Gamma(\alpha + 1/p)}$ and is holomorphic in a neighborhood of $0,$ we have that $\LL(\tilde{\mu})$ has a finite polynomial limit at $0$, for a definition of polynomial limit see \cite[Page 27]{Haase2006}. Also, using the fact that 
    \begin{equation}
        \label{eq:quotient_gammas}
        \frac{\Gamma(z+\alpha)}{\Gamma(z)} = z^\alpha(1+ \frac{\alpha(\alpha+1)}{2z} + O(\abs{z}^{-2})), \quad \abs{z} \to \infty,
    \end{equation}
    whenever $z \neq 0, -1, -2, \dots$ and $z \neq -\alpha, -\alpha-1, \dots$, see \cite{ErdelyiTricomi1951}. We arrive to, $\LL(\tilde{\mu})(z) = O(\abs{z}^{-\alpha})$ as $|z|\to\infty,$ so $\lim_{\abs{z} \to \infty}\LL(\tilde{\mu})(z) = 0$ with $z \in S_\varphi$ and $\LL(\tilde{\mu})$ has finite polynomial limit at $\infty$. Using \cite[Lemma 2.2.3]{Haase2006} we have that $\LL(\tilde{\mu}) \in \mathcal{E}(S_\varphi)$, where $\mathcal{E}(S_\varphi)$ is the extended Dunford-Riesz class. Since $-B - 1/p$ is a sectorial operator of angle $\frac{\pi}{2}$, by the spectral mapping theorem \cite[Theorem 2.7.8]{Haase2006} and using $\Ca^* =\LL(\tilde{\mu})(-B-1/p)$, we have
    \[
        \sigma(\Ca^*) = \LL(\tilde{\mu})(\sigma(-B-1/p)) \cup \{0\}= \left\{ \frac{\Gamma(\alpha+1)\Gamma(z+\frac{1}{p})}{\Gamma(\alpha+ z + \frac{1}{p})} \, : \, z \in \overline{\C_+}\right\} \cup \{0\}.
    \]

Item (iv) follows from the fact that the dual operator of $\Ca^*$ defined on $L^p[0,1]$ with $1 \leq p <\infty$ is $\Ca$ defined on $L^{p'}[0,1]$. For a closed densely defined operator, its residual operator coincides with the point spectrum of its dual, see \cite[Proposition IV.1.12]{EngelNagel2000} and thus item (ii) of \Cref{th:spectrum_C} below finishes the proof of this item.

    Item (v) follows from the spectral mapping theorem for the essential spectrum that appears in \cite[Theorem 5.4]{Oliva-Maza2023},
    \[
        \sigma_{ess}(\Ca^*) = \left\{ \frac{\Gamma(\alpha+1)\Gamma(z+\frac{1}{p})}{\Gamma(\alpha+ z + \frac{1}{p})} \, : \, z \in i\R \right\} \cup \{0\},
    \]
    and by \eqref{eq:quotient_gammas} and \cite[Theorem 5.6]{Oliva-Maza2023} we have that,
    \[
        \sigma_{point}(\Ca^*) =\LL(\tilde{\mu})(\sigma_{point}(-B-1/p)) = \emptyset.
    \]
    In \cite[Theorem 5.4 and Theorem 5.6]{Oliva-Maza2023} the author uses the notion of quasi-regularity; notice that the property of having polynomial limit is stronger, so we are under the assumptions of those results.

    Lastly, item (iii) follows from the fact that the boundary of the spectrum is always contained in the approximate spectrum and the fact that $\sigma_{ap}(\Ca^*) \cup \sigma_r (\Ca^*)= \sigma(\Ca^*)$.
\end{proof}

\begin{theorem}
    \label{th:spectrum_C}
    Let $\Real \alpha > 0$ and $1 < p \leq \infty$. Then the operator $\Ca: L^p[0,1] \rightarrow L^p[0,1]$ satisfies
    \begin{enumerate}[(i)]
        \setlength\itemsep{0.5em}
        \item $ \displaystyle \sigma(\Ca) =  \left\{ \frac{\Gamma(\alpha+1)\Gamma(z+\frac{1}{p'})}{\Gamma(\alpha+ z + \frac{1}{p'})} \, : \, z \in \overline{\C_+}\right\} \cup \{0\}.$
        \item  For $1 < p < \infty$, $ \displaystyle \sigma_{point}(\Ca) =  \left\{ \frac{\Gamma(\alpha+1)\Gamma(z+\frac{1}{p'})}{\Gamma(\alpha+ z + \frac{1}{p'})} \, : \, z \in \C_+\right\},$ \\ \phantom{a} \\ and for $p = \infty$, $\displaystyle  \sigma_{point}(\Ca) = \left\{ \frac{\Gamma(\alpha+1)\Gamma(z+1)}{\Gamma(\alpha+ z + 1)} \, : \, z \in \overline{\C_+}\right\}$.
        \item $ \displaystyle \sigma_{ap}(\Ca) =  \left\{ \frac{\Gamma(\alpha+1)\Gamma(z+\frac{1}{p'})}{\Gamma(\alpha+ z + \frac{1}{p'})} \, : \, z \in \overline{\C_+}\right\} \cup \{0\}.$
        \item For $1 < p < \infty$, $ \displaystyle \sigma_{r}(\Ca) = \emptyset$.
        \item $\displaystyle \sigma_{ess}(\Ca) = \left\{ \frac{\Gamma(\alpha+1)\Gamma(z+\frac{1}{p'})}{\Gamma(\alpha+ z + \frac{1}{p'})} \, : \, z \in i\R \right\} \cup \{0\}$.
    \end{enumerate}
\end{theorem}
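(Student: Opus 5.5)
We mirror the proof of \Cref{th:spectrum_C*}, with $T(t)$ and $A$ in place of $S(t)$ and $B$, and treat $p=\infty$ by duality. Take $1<p<\infty$ first. By \Cref{th:semigroups} and \Cref{pr:boundedness_semigroup}, $(e^{t/p'}T(t))_{t\geq 0}$ is a uniformly bounded $C_0$-semigroup on $L^p[0,1]$ with generator $A+1/p'$. Hence $-A-1/p'$ is sectorial of angle $\pi/2$. Set $d\mu(t)=\alpha e^{-t/p'}(1-e^{-t})^{\alpha-1}dt$. By the subordination identity in \Cref{th:cesaro_boundedness}, $\Ca=\LL(\mu)(-A-1/p')$, where
\[
    \LL(\mu)(z)=\frac{\Gamma(\alpha+1)\Gamma(z+\frac{1}{p'})}{\Gamma(\alpha+z+\frac{1}{p'})}.
\]
As before, $\LL(\mu)$ extends holomorphically to a sector $S_\varphi$ with $\varphi>\pi/2$, since its poles lie at $-1/p'-n$. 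It has a finite polynomial limit at $0$, and by \eqref{eq:quotient_gammas} it is $O(|z|^{-\alpha})$ at $\infty$. So $\LL(\mu)\in\mathcal{E}(S_\varphi)$, and we can apply the following spectral mapping theorems to $\sigma(-A-1/p')=\overline{\C_+}$ (\Cref{th:fine_spectrum_A}):
\begin{itemize}
    \item Haase's theorem gives (i).
    \item \cite[Theorem 5.4]{Oliva-Maza2023} gives (v), since $\sigma_{ess}(-A-1/p')=i\R$.
    \item \cite[Theorem 5.6]{Oliva-Maza2023} gives (ii), since $\sigma_{point}(-A-1/p')=\C_+$.
\end{itemize}

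Item (ii) needs care. Only the inclusion $\LL(\mu)(\C_+)\subseteq\sigma_{point}(\Ca)$ is immediate. If $g_\lambda(s)=s^{-\lambda-1}$ with $\Real\lambda<-1/p'$, then $T(t)g_\lambda=e^{\lambda t}g_\lambda$. Integrating gives $\Ca g_\lambda=\LL(\mu)(-\lambda-1/p')\,g_\lambda$, which a direct Beta-integral computation also confirms. For the reverse inclusion, that no other point is an eigenvalue, we rely on the Oliva-Maza result. This uses that $\LL(\mu)$ is nonconstant and quasi-regular, which follows from the polynomial limits. We also check that $0$ is not an eigenvalue. Suppose $\Ca f=0$. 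Then $\int_0^s (s-u)^{\alpha-1}f(u)\,du=0$ for all $s$, so the Riemann--Liouville integral of $f$ vanishes, and Titchmarsh's convolution theorem gives $f=0$. We also check that $0$ is not in $\LL(\mu)(\C_+)$, which holds since $1/\Gamma(\alpha+z+1/p')$ is finite there.

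For (iv), with $1<p<\infty$ the space is reflexive. By \Cref{remark:duality_cesaro}, the dual of $\Ca$ on $L^p$ is $\Ca^*$ on $L^{p'}$, so $\sigma_r(\Ca)=\sigma_{point}(\Ca^*)=\emptyset$ by \Cref{th:spectrum_C*}(ii). Then (iii) follows from $\sigma=\sigma_{ap}\cup\sigma_r$.

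For $p=\infty$, $\Ca$ on $L^\infty$ is the dual of $\Ca^*$ on $L^1$. Items (i) and (v) then follow from \Cref{th:spectrum_C*} with $p=1$, using $1/p'=1$, because the spectrum and essential spectrum are invariant under duality \cite[Th. IX.1.1]{EdmundsEvans1987}. For the point spectrum, \cite[Proposition IV.1.12]{EngelNagel2000} gives $\sigma_{point}(\Ca)\supseteq\sigma_r(\Ca^*)=\LL(\mu)(\overline{\C_+})$. The eigenfunctions $g_\lambda$ with $\Real\lambda\le -1$ are bounded, so they exhibit this directly. Equality holds because $\sigma_{point}(\Ca)\subseteq\sigma(\Ca)$, and $0$ is again excluded by injectivity. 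Item (iii) then holds since $\sigma_{point}\subseteq\sigma_{ap}$ and $0\in\sigma_{ess}\subseteq\sigma_{ap}$; the point $0$ lies in $\sigma_{ap}$ because $\Ca$ is not bounded below.

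The main obstacle is the exact point spectrum for $1<p<\infty$. We have to make sure \cite[Theorem 5.6]{Oliva-Maza2023} gives equality rather than only an inclusion, including in the boundary region $\Real z=0$. If it does not, we would fall back on the ODE approach. For $\Ca=\mathcal{C}_1$ this is explicit, but for general $\alpha$ an eigenfunction equation is a fractional equation, so the functional-calculus route is preferable.
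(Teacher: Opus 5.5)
Your argument for $1<p<\infty$ is essentially the paper's. You use \cite[Theorem 5.6]{Oliva-Maza2023} for $\sigma_{point}$ with Titchmarsh excluding $0$, and $\sigma_r(\Ca)=\sigma_{point}(\Ca^*)=\emptyset$ for (iv). The paper applies that theorem exactly as you do, so no fallback to a fractional eigenvalue equation is needed. For $p=\infty$ you also follow the paper: the bounded eigenfunctions $g_\lambda$, $\Real\lambda\le -1$, give the inclusion, and $\sigma_{point}\subseteq\sigma$ plus injectivity give equality. The one variation is that you get (i) and (v) for $1<p<\infty$ directly from Haase's and Oliva-Maza's spectral mapping theorems applied to $-A-1/p'$. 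The paper instead obtains (i) and (v) for all $1<p\le\infty$ by duality from \Cref{th:spectrum_C*} and \cite[Th. IX.1.1]{EdmundsEvans1987}. Both routes work: yours is self-contained and parallel to \Cref{th:spectrum_C*}, the paper's is shorter.

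Two steps in the $p=\infty$ part need repair.

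\textbf{The claim $0\in\sigma_{ap}(\Ca)$ in (iii).} Your reason, $0\in\sigma_{ess}\subseteq\sigma_{ap}$, is false as a general inclusion with the paper's definitions. An injective operator with closed range of infinite codimension has $0\in\sigma_{ess}\setminus\sigma_{ap}$. The paper's own $S(t)$ in \Cref{th:fine_spectrum_S} is an example: $\sigma_{ess}(S(t))$ is the closed disc, while $\sigma_{ap}(S(t))$ is only its boundary circle. Saying ``because $\Ca$ is not bounded below'' just restates what has to be proved. You also cannot fall back on $\partial\sigma\subseteq\sigma_{ap}$. For $\alpha=3$, $p=\infty$ one has $\LL(\mu)(z)=6/((z+1)(z+2)(z+3))$, and every sufficiently small $w\neq 0$ is attained at some $z\in\C_+$. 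So $0$ is an interior point of $\sigma(\Ca)$.

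The fix is easy. $\sigma_{ap}$ is closed and contains the eigenvalues $\LL(\mu)(z)$, $z\in\overline{\C_+}$, which tend to $0$ as $\abs{z}\to\infty$ by \eqref{eq:quotient_gammas}. Alternatively, use the paper's test functions $f_n(s)=s^n$, for which $\norm{f_n}_\infty=1$ and $\norm{\Ca f_n}_\infty\to 0$.

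\textbf{Circular citation in (ii).} You cite $\sigma_r(\Ca^*)=\LL(\mu)(\overline{\C_+})$ on $L^1[0,1]$ from \Cref{th:spectrum_C*}(iv). In the paper that item is proved from the present item (ii), so it cannot be used here. Drop it and let your direct eigenfunction computation carry the inclusion, as the paper does.

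A minor slip: $0\notin\LL(\mu)(\C_+)$ because $1/\Gamma(\alpha+z+1/p')\neq 0$ there (as $\Real(\alpha+z+1/p')>0$ and $\Gamma$ has no zeros), not because that quantity is finite.
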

\begin{proof}
    Item (i) and (v) follow from the duality property stated in \Cref{remark:duality_cesaro} and the duality of the spectrum and essential spectrum, see \cite[Th. IX.1.1]{EdmundsEvans1987}. 

    When $1< p < \infty$ the result for the point spectrum follows the same ideas as the proof of the point spectrum for the dual Cesàro-Hardy operator. For the $p=\infty$ case notice that by the result for the point spectrum for the family $(T(t))_{t \geq 0},$ we have that $g_\lambda (s) = \frac{1}{s^{\lambda + 1}}$ is an eigenfunction with eigenvalue $e^{t \lambda}$ with $\Real \lambda \leq -1$, therefore,
    \[
        \begin{aligned}
            \Ca g_\lambda  = \alpha \int_0^\infty (1-e^{-t})^{\alpha-1} T(t) g_\lambda \, dt = g_\lambda\alpha \int_0^\infty (1-e^{-t})^{\alpha-1} e^{t \lambda} \, dt  = \frac{\Gamma(\alpha+1)\Gamma(-\lambda)}{\Gamma(\alpha - \lambda)}  g_\lambda,
        \end{aligned}
    \]
    with $\Real \lambda \leq -1$, therefore 
    \[
        \left\{ \frac{\Gamma(\alpha+1)\Gamma(z+1)}{\Gamma(\alpha+ z + 1)} \, : \, z \in \overline{\C_+}\right\} \subseteq \sigma_{point}(\Ca).
    \]

    Moreover, $0 \notin \sigma_{\text{point}}(\Ca)$ by a theorem of E.C. Titchmarsh \cite[Theorem VII]{Titchmarsh1926}, which states that if $f, g \in L^1[0,1]$ satisfy  
    \[
        \int_0^s f(u-s) g(u) \, du = 0 \quad \text{for almost every } s \in (0,1),
    \]
    and $f \neq 0$ almost everywhere on $[0,1]$, then $g$ must be zero almost everywhere on $[0,1]$. As $\Ca (s) = \frac{\alpha}{s^\alpha} \int_0^{s} (s-u)^{\alpha-1}f(u) \, du$, the last theorem implies that if we have $\Ca f = 0$ then $f=0$ so $\Ca$ is injective. As $\sigma_{point}(\Ca) \subseteq \sigma(\Ca)$ the result follows.
    
    Item (iv) follows the same argument as in the proof of \Cref{th:spectrum_C*}, although in this case the proof doesn't apply for the case  $p=\infty$.
    
    Lastly, for item (iii), notice that when $1<p<\infty$ by $\sigma_{ap}(\Ca) \cup \sigma_{r} (\Ca)= \sigma(\Ca),$ item (i) and (iv), the result follows. For $p = \infty$, notice that $0 \in \sigma_{ap}(\Ca)$. Indeed, consider the sequence $f_n(s) = s^n$, $s \in (0,1)$ and $n \in \N$. Thus,
    \[
        \abs{\Ca f_n(s)} \leq \abs{\alpha} s^n \int_0^\infty (1-e^{-t})^{\Real \alpha-1} e^{-(n+1) t} \, dt \to 0, \quad \text{as } n \to \infty
    \]
    by the dominated convergence theorem. Using the characterization of the approximate point spectrum via sequences (see \cite[Lemma IV.1.9]{EngelNagel2000}) we have that $0 \in \sigma_{ap}(\Ca)$. Also, by definition, $\sigma_{point}(\Ca) \subseteq \sigma_{ap}(\Ca),$ so the result follows.
\end{proof}

\begin{remark}
    \label{rem5.5}
    Note that in particular the classical Cesàro-Hardy operator $\CC:=\CC_1$ is the inverse of the differential operator $-A$ (see \Cref{eq:integral_Lambda_operators} and  \Cref{prop:integral_resolvent}), that is, $$\CC f(s)=\frac{1}{s}\int_0^s f(u)\,du=\int_0^{\infty} (T(t)f)(s)\,dt=(-A)^{-1}f(s),\quad s\in (0,1),\, f\in L^p[0,1].$$
\end{remark}

\section{Connection with discrete generalized Cesàro operators on \texorpdfstring{$\ell^p$}{ℓᵖ}}
\label{sec:discrete}

In this section we consider the usual sequence Lebesgue space  $\ell^p$ with the norm
$$
\left(\sum_{n=0}^\infty \vert a(n)\vert^p\right)^{\frac{1}{p}},
$$
for $1\le p<\infty,$ and $\sup_n\vert a(n)\vert<\infty$ for $p=\infty.$ Let us denote by $\mathscr{C}_{\alpha}$ and $\mathscr{C}_{\alpha}^*$ the generalized discrete Ces\`{a}ro operator for $\Real\alpha>0$ and its dual operator, given by
\begin{eqnarray*}
\mathscr{C}_{\alpha}a(n)&:=&\frac{1}{k^{\alpha+1}(n)}\sum_{j=0}^nk^{\alpha}(n-j)a(j), \cr \mathscr{C}^*_{\alpha}a(n)&:=&\sum_{j=n}^{\infty}\frac{1}{k^{\alpha+1}(j)}k^{\alpha}(j-n)a(j),
\end{eqnarray*}
for $a\in \ell^p,$ and $k^\alpha(n):=\displaystyle{\frac{\Gamma(n+\alpha)}{\Gamma(\alpha)\Gamma(n+1)}}$ which are the classical Cesàro numbers for $n \in \N_0$. Note that $\mathscr{C}_1=C_0.$

 The aim of this section is to present a nice connection between $\mathcal{C_{\alpha}}$ and $\mathscr{C}_{\alpha}$ and then $\mathcal{C}_{\alpha}^*$ and $\mathscr{C}_{\alpha}^*$. In the next theorem, we present a way to do this. The proof of the following theorem straightforward and is left to the reader.
\begin{theorem} Take $1\le p \le \infty$.
\begin{itemize}
\item[(i)] The operator $V: L^p[0,1]\to \ell^p$ given by
$$
V(f)(n):= \int_0^1{\frac{t^n}{n!}}e^{-t}f(t)dt, \qquad n\in \N, \quad f\in L^p[0,1],
$$
is well-defined, linear and $\Vert V\Vert \le 1$.
\item[(ii)] The operator $V^*: \ell^p\to L^p[0,1]$ given by
$$
V^*(a)(t):=\sum_{n=0}^\infty {\frac{t^n}{n!}}e^{-t}a(n), \qquad t\in [0,1], \quad a\in \ell^p,
$$
is also well-defined, linear and $\Vert V\Vert \le 1$. In particular, $V^*$ on $\ell^p$ is the dual operator of $V$ on $L^{p'}[0,1]$ for $1<p\leq \infty.$ 
\end{itemize}
\end{theorem}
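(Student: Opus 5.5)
Both operators are integral operators against the kernel $K(n,t):=\frac{t^n}{n!}e^{-t}$ on $\N_0\times[0,1]$. Everything should follow from the two sums
\[
\sum_{n=0}^\infty K(n,t)=e^{t}e^{-t}=1, \qquad \int_0^1 K(n,t)\,dt\le \int_0^\infty \frac{t^n}{n!}e^{-t}\,dt=1,
\]
which say that the kernel has row and column sums at most $1$. The plan is the usual Schur test, done separately for $p=1$, $p=\infty$ and $1<p<\infty$, or equivalently Riesz--Thorin interpolation between the two endpoints.

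For (i), linearity is clear, and well-definedness holds because $K(n,\cdot)$ is bounded by $1$ on $[0,1]$, so $\abs{Vf(n)}\le\norm{f}_1\le\norm{f}_p$.
\begin{itemize}
\item For $p=\infty$: $\abs{Vf(n)}\le \norm{f}_\infty\int_0^1K(n,t)\,dt\le\norm{f}_\infty$.
\item For $p=1$: Tonelli gives $\sum_n\abs{Vf(n)}\le\int_0^1\abs{f(t)}\sum_nK(n,t)\,dt=\norm{f}_1$.
\item For $1<p<\infty$: apply Jensen (or Hölder) to the sub-probability measure $K(n,t)\,dt$, giving $\abs{Vf(n)}^p\le\int_0^1K(n,t)\abs{f(t)}^p\,dt$. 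Summing over $n$ and using Tonelli together with $\sum_nK(n,t)=1$ yields $\norm{Vf}_p^p\le\norm{f}_p^p$.
\end{itemize}
(The statement writes $n\in\N$; including $n=0$ changes nothing.)

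For (ii), the series defining $V^*a(t)$ converges absolutely for each $t$, because $\abs{a(n)}\le\norm{a}_\infty\le\norm{a}_p$ and $\sum_nK(n,t)=1$.
\begin{itemize}
\item For $p=\infty$: $\abs{V^*a(t)}\le\norm{a}_\infty$.
\item For $p=1$: $\int_0^1\abs{V^*a}\,dt\le\sum_n\abs{a(n)}\int_0^1K(n,t)\,dt\le\norm{a}_1$.
\item For $1<p<\infty$: since $K(\cdot,t)$ is a probability distribution on $\N_0$, Jensen gives $\abs{V^*a(t)}^p\le\sum_nK(n,t)\abs{a(n)}^p$. Integrating over $[0,1]$ and using the column bound gives $\norm{V^*a}_p^p\le\norm{a}_p^p$.
\end{itemize}
Measurability of $V^*a$ is immediate, since it is a pointwise limit of continuous partial sums.

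For the duality claim, take $1<p\le\infty$, $f\in L^{p'}[0,1]$ and $a\in\ell^p$. The pairing of $Vf$ with $a$ is $\sum_n a(n)\int_0^1K(n,t)f(t)\,dt$, and we want to interchange sum and integral to obtain $\int_0^1 f(t)\,V^*a(t)\,dt$. Fubini applies once
\[
\sum_n\int_0^1\abs{a(n)}K(n,t)\abs{f(t)}\,dt<\infty,
\]
and this follows from $\abs{a(n)}\le\norm{a}_\infty$ and $\sum_nK(n,t)=1$, which bound the left side by $\norm{a}_\infty\norm{f}_1$. Here $\norm{f}_1<\infty$ because $f\in L^{p'}[0,1]\subset L^1[0,1]$.

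The case $p=\infty$ needs a comment on what "dual operator" means: the dual of $L^1$ is $L^\infty$, but $(\ell^\infty)'$ is larger than $\ell^1$. I would read the statement as saying that $V^*$ on $\ell^\infty$ is the Banach-space adjoint of $V:L^1[0,1]\to\ell^1$, with $\ell^\infty=(\ell^1)'$, and this is exactly the identity just verified. For $1<p<\infty$ the same identity gives the adjoint of $V:L^{p'}\to\ell^{p'}$ in the ordinary sense. The only genuinely delicate point is this endpoint convention; every other step is a routine consequence of the two kernel sums.
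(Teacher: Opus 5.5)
Your argument is correct. It is exactly the routine verification the paper omits: the paper calls the theorem straightforward and leaves the proof to the reader. The identities $\sum_{n\ge 0}\frac{t^n}{n!}e^{-t}=1$ and $\int_0^1\frac{t^n}{n!}e^{-t}\,dt\le 1$ give the Schur/Jensen bounds $\|V\|\le 1$ and $\|V^*\|\le 1$ for all $1\le p\le\infty$. Your Tonelli bound by $\|a\|_\infty\|f\|_1$ justifies the interchange that yields $\langle Vf,a\rangle=\langle f,V^*a\rangle$.

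The endpoint $p=\infty$ is not actually delicate. The claim only concerns the adjoint of $V:L^{1}[0,1]\to\ell^{1}$, which acts on $(\ell^1)'=\ell^\infty$ with values in $(L^1)'=L^\infty$, and that is exactly what you verified.
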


\begin{remark}Note that both operators are not invertible for $1\le p\le \infty$. In the case that $V$ was invertible, there will exist $f\in L^p[0,1]$ such that $V(f)=\delta_0$, where $\delta_0(n):=1$ if $n=0$ and  $\delta_0(n):=0$ for $n>0$. Then 
$$
\sum_{n=0}^{\infty} Vf(n)z^n=1= \int_0^1 f(t)e^{-t(1-z)}dt, \qquad z\in\C,\, |z|<1.
$$
This implies that the function $f$ should be the delta Dirac distribution, which is a contradiction. 

Now we suppose that $V^*$ is invertible for $1\le p\le \infty$. Then there exists $a\in \ell^p$ such that $V^*(a)=\chi_{[0,1]}$, i.e.,
$$e^{t}=\sum_{n=0}^\infty a(n){\frac{t^n}{n!}}, \qquad t\in [0,1].
$$
Then $a(n)=1 $ for $n\ge 0$ and we obtain a contradiction $(a\in \ell^p)$ for $1\le p<\infty$. For $p=\infty$, and  $V^*$ is invertible, there exists $b\in \ell^\infty$ such that $V^*(b)(t)=t$, i.e.,
$$te^{t}=\sum_{n=0}^\infty b(n){\frac{t^n}{n!}}, \qquad t\in [0,1],
$$
and we obtain a contradiction with $b\in\ell^\infty$.

\end{remark}

Now we consider the one-parameter families of operators $(\mathscr{T}(t))_{t\geq 0}$ and $(\mathscr{S}(t))_{t\geq 0}$ acting on  $\ell^p$ with $1\le p\le \infty$, where
\begin{eqnarray}\label{semis}
\mathscr{T}(t)a(n)&:=&\sum_{j=0}^n \binom{n}{j} e^{-tj}(1-e^{-t})^{n-j}a(j),\cr \mathscr{S}(t)a(n)&:=&e^{-tn}\sum_{j=n}^{\infty} \binom{j}{n} (1-e^{-t})^{j-n}a(j),
\end{eqnarray}
for $n\in \N_0$ and $a\in \ell^p$. In fact, both families  $(\mathscr{T}(t))_{t\geq 0}$ and $(\mathscr{S}(t))_{t\geq 0}$ are dual $C_0$-semigroups in $\ell^p$, $\Vert \mathscr{T}(t)\Vert\le e^{\frac{t}{p}} $ and  $\Vert \mathscr{S}(t)\Vert\le e^{t(1-{\frac{1}{p}})} $ for $1\le p\le \infty$. As we have explained in the introduction, these semigroups were introduced in \cite{AbadiasMiana2018} to study spectral properties of $C_0$ and its generalizations $\mathscr{C}_{\alpha}$ and $\mathscr{C}_{\alpha}^*$ on $\ell^p$ (and also on other sequence spaces).

In the following theorem, we present that the one-parameter families $(T(t))_{t\geq0}$, $(S(t))_{t\geq 0}$,  $(\mathscr{T}(t))_{t\geq 0}$ and $(\mathscr{S}(t))_{t\geq 0}$ intertwines with $V$ and $V^*$.
\begin{theorem}
    \label{intert}
    Let $1\le p\le \infty,$ then
    \begin{itemize}
        \item[(i)] $\mathscr{S}(t)\circ V= e^t(V\circ S(t))$ for $t>0$.
        \item[(ii)] $ V^*\circ\mathscr{T}(t)= e^t( T(t)\circ V^*)$ for $t>0$.
    \end{itemize}
\end{theorem}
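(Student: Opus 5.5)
Both identities can be checked by a direct computation on the explicit kernels. In each case one expands a binomial-type sum, interchanges a sum with an integral or with another sum, and recognises an exponential series. I would not go through duality: the statement covers all $1\le p\le\infty$, including the endpoint cases, so a pointwise computation is cleaner. Fix $t>0$ and write $q:=1-e^{-t}\in(0,1)$.

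\emph{Item (i).} Take $f\in L^p[0,1]\subseteq L^1[0,1]$ and $n\in\N_0$.

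First I compute the right-hand side. By definition,
\[
V(S(t)f)(n)=\int_0^{e^{-t}}\frac{x^n}{n!}e^{-x}f(e^tx)\,dx .
\]
The substitution $u=e^tx$ turns this into $e^{-t}e^{-tn}\int_0^1\frac{u^n}{n!}e^{-e^{-t}u}f(u)\,du$. Hence
\[
e^tV(S(t)f)(n)=e^{-tn}\int_0^1\frac{u^n}{n!}e^{-e^{-t}u}f(u)\,du .
\]

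Next the left-hand side. Inserting the definition of $V$ into $\mathscr{S}(t)$ gives
\[
\mathscr{S}(t)Vf(n)=e^{-tn}\sum_{j\ge n}\binom{j}{n}q^{j-n}\int_0^1\frac{u^j}{j!}e^{-u}f(u)\,du .
\]
I use $\binom{j}{n}\frac{1}{j!}=\frac{1}{n!(j-n)!}$ and set $k=j-n$. Moving the sum inside the integral, the series $\sum_k (qu)^k/k!$ equals $e^{qu}$, and $e^{-u}e^{qu}=e^{-e^{-t}u}$. This gives exactly the expression obtained above.

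\emph{Item (ii).} Take $a\in\ell^p\subseteq\ell^\infty$ and $x\in[0,1]$.

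By definition,
\[
V^*(\mathscr{T}(t)a)(x)=e^{-x}\sum_{n\ge0}\frac{x^n}{n!}\sum_{j=0}^n\binom{n}{j}e^{-tj}q^{n-j}a(j).
\]
I reorder the double sum, summing over $j$ first with $k=n-j$, and use $\frac{1}{n!}\binom{n}{j}=\frac{1}{j!\,k!}$. The inner series is $\sum_k (qx)^k/k!=e^{qx}$, so the whole expression equals
\[
\sum_{j\ge0}a(j)\frac{(e^{-t}x)^j}{j!}e^{-e^{-t}x}.
\]
This is $V^*a(e^{-t}x)=e^{t}\,\bigl(e^{-t}V^*a(e^{-t}x)\bigr)=e^t\,T(t)V^*a(x)$, which is the claim.

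\emph{Justifying the interchanges.} This is the only point needing care. Replace $f$ and $a$ by $|f|$ and $|a|$, and note that all other terms are nonnegative.

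In (i), the same computation bounds the double integral-sum by $e^{-tn}\int_0^1\frac{u^n}{n!}|f(u)|\,du\le\|f\|_1<\infty$. So Tonelli and then Fubini apply.

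In (ii), the double series is bounded by $\|a\|_\infty\, e^{-x}\sum_n\frac{x^n}{n!}(e^{-t}+q)^n=\|a\|_\infty$. So the rearrangement is legitimate for every $x$.

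Both identities are equalities of well-defined elements of $\ell^p$ and $L^p[0,1]$ respectively, since $V$, $V^*$, $S(t)$, $T(t)$, $\mathscr{S}(t)$ and $\mathscr{T}(t)$ are bounded. The pointwise and coordinatewise equalities above therefore suffice.
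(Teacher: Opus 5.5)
Your proof is correct and is the same direct computation the paper gives: reindex the binomial sums, interchange sum and integral, sum the exponential series $\sum_k (qu)^k/k! = e^{qu}$, and in (i) use the substitution $u=e^t x$ to match the two sides. Your Tonelli/Fubini bounds (by $\|f\|_1$ in (i), and by $\|a\|_\infty$ via $e^{-t}+q=1$ in (ii)) justify the interchanges, a step the paper leaves implicit.
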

\begin{proof} (i) Take $t>0$ and $f\in L^p[0,1]$ with $1\le p\le\infty$. For $n\in \N_0$, one obtains that 
\begin{eqnarray*}
(\mathscr{S}(t)\circ V)(f)(n)&=&e^{-tn}\displaystyle\sum_{j=n}^{\infty}\binom{j}{n}(1-e^{-t})^{j-n} \int_0^1\frac{s^j}{j!}e^{-s}f(s)\,ds\cr
&=&e^{-tn}\int_0^1 e^{-s}f(s)\displaystyle\sum_{j=n}^{\infty}\frac{1}{n!(j-n)!}(1-e^{-t})^{j-n}s^j\,ds\cr
&=&e^{-tn}\int_0^1 e^{-s}f(s)\frac{s^n}{n!}\displaystyle\sum_{m=0}^{\infty}\frac{(s(1-e^{-t}))^m}{m!}\,ds\cr
&=&e^{-tn}\int_0^1 e^{-s}f(s)\frac{s^n}{n!}e^{s(1-e^{-t})}\,ds \cr
&=&e^{t}\int_0^{e^{-t}}f(e^tu)\frac{u^n}{n!}e^{-u}\,du=  e^t(V\circ S(t))(f)(n).
\end{eqnarray*}
    (ii) Now take  $t>0$ and $a\in \ell^p$ with $1\le p\le\infty$.  For $0<s<1$, we get

\begin{eqnarray*}
(V^*\circ\mathscr{T}(t))(a)(s)&=&\sum_{n=0}^\infty e^{-s}\frac{s^n}{n!}\displaystyle\sum_{j=0}^{n}\binom{n}{j}e^{-tj}(1-e^{-t})^{n-j}a(j) \cr
&=&e^{-s}\sum_{j=0}^\infty \frac{e^{-tj}}{j!}a(j)\displaystyle\sum_{n=j}^{\infty}\frac{ s^n(1-e^{-t})^{n-j} }{(n-j)!}\cr
&=&e^{-s}\sum_{j=0}^\infty \frac{(se^{-t})^j}{j!}a(j)\displaystyle\sum_{m=0}^{\infty}\frac{(s(1-e^{-t}))^m}{m!}\,ds\cr
&=&\sum_{j=0}^\infty \frac{(se^{-t})^j}{j!}e^{-se^{-t}}a(j)= e^t( T(t)\circ V^*)(a)(s),
\end{eqnarray*}
and we conclude the proof.
\end{proof}

\begin{remark} In the case that $1\le p <\infty$, we may prove Theorem \ref{intert} (ii) by duality. For $t>0$ we get 
$$
 V^*\circ\mathscr{T}(t)= (\mathscr{S}(t)\circ V)^*= e^t(V\circ S(t))^*=  e^t (T(t)\circ V^*).
$$
\end{remark}
The following theorem presents the connection between  ${\mathcal{C}}_{\alpha}$, ${\mathcal{C}}_{\alpha}^*$,  $\mathscr{C}_{\alpha}$  and $\mathscr{C}_{\alpha}^*$.
Remind that for $a\in \ell^p$ and $n\in\N_0$, we have that
\begin{eqnarray*}
{\mathscr{C}}_{\alpha}a(n)&=&\displaystyle\alpha\int_0^{\infty}(1-e^{-t})^{\alpha-1}e^{-t}\mathscr{T}(t)a(n)\,dt,\quad \, 1<p\leq\infty,\cr
{\mathscr{C}}^*_{\alpha}a(n)&=&\alpha\int_0^{\infty}(1-e^{-t})^{\alpha-1}e^{-t}\mathscr{S}(t)a(n)\,dt,\quad \, 1\leq p <\infty,
\end{eqnarray*}
see \cite[Theorem 7.1] {AbadiasMiana2018}.

\begin{theorem} Take  $\Real\alpha >0.$ Then
\begin{itemize} 
\item[(i)] $V\circ  {\mathcal{C}}_{\alpha}^*=\mathscr{C}^*_{\alpha}\circ V$ for  $1\leq p <\infty$.
\item[(ii)] $V^*\circ\mathscr{C}_{\alpha}=  {\mathcal{C}}_{\alpha}\circ V$ for  $1<p\leq\infty$.
\end{itemize}
    

\end{theorem}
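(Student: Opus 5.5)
The plan is to combine the subordination formulas with the intertwining relations of \Cref{intert}. By \Cref{th:cesaro_boundedness}, for $f\in L^p[0,1]$ with $1\le p<\infty$,
\[
    \Ca^* f = \alpha\int_0^\infty (1-e^{-t})^{\alpha-1} S(t)f\,dt,
\]
as a Bochner integral in $L^p[0,1]$. This integral converges absolutely because $\norm{S(t)}=e^{-t/p}$. From \cite[Theorem 7.1]{AbadiasMiana2018} we also have
\[
    \mathscr{C}^*_\alpha a=\alpha\int_0^\infty(1-e^{-t})^{\alpha-1}e^{-t}\mathscr{S}(t)a\,dt .
\]
Since $V$ is bounded ($\norm{V}\le 1$), it commutes with Bochner integrals. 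This gives
\[
    V\Ca^* f=\alpha\int_0^\infty(1-e^{-t})^{\alpha-1}\,V S(t)f\,dt .
\]
By \Cref{intert}(i) we have $V S(t) f= e^{-t}\mathscr{S}(t)Vf$. Substituting this into the integral yields exactly $\mathscr{C}^*_\alpha (Vf)$, which proves (i).

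For (ii) with $1<p\le\infty$, I start from $\Ca g=\alpha\int_0^\infty(1-e^{-t})^{\alpha-1}T(t)g\,dt$ and from the analogous formula for $\mathscr{C}_\alpha$ with the weight $e^{-t}\mathscr{T}(t)$. Applying the bounded operator $V^*$ under the integral defining $\mathscr{C}_\alpha a$ and using \Cref{intert}(ii), in the form $V^*\mathscr{T}(t)=e^tT(t)V^*$, gives
\[
    V^*\mathscr{C}_\alpha a=\alpha\int_0^\infty(1-e^{-t})^{\alpha-1}T(t)V^*a\,dt=\Ca V^* a .
\]
One point needs checking: the statement writes ${\mathcal C}_\alpha\circ V$, but the identity that is dimensionally consistent is with $V^*$, and I would prove that version. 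For $1<p<\infty$, (ii) alternatively follows from (i) by duality. Indeed, by \Cref{remark:duality_cesaro} $\Ca$ is the dual of $\Ca^*$, and $\mathscr{C}_\alpha$ is the dual of $\mathscr{C}^*_\alpha$. Taking adjoints in (i) then gives $\Ca V^*=V^*\mathscr{C}_\alpha$.

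The main obstacle is the case $p=\infty$, where the semigroups are not strongly continuous. There the integrals must be read pointwise (or weak$^*$) rather than as Bochner integrals, so moving $V^*$ inside needs justification. I would handle this pointwise: for fixed $s\in(0,1)$, write $V^*$ as a series in $n$ and use Fubini/Tonelli. The bound is $\sum_n \frac{s^n}{n!}e^{-s}|a(n)|\le\norm{a}_\infty$, together with $\int_0^\infty |(1-e^{-t})^{\alpha-1}|e^{-t}\,\norm{\mathscr{T}(t)a}_\infty\,dt<\infty$. The latter holds because $\norm{\mathscr{T}(t)}\le 1$ on $\ell^\infty$ and $\Real\alpha>0$. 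This allows the sum and integral to be interchanged, after which the pointwise computation in the proof of \Cref{intert}(ii) applies under the integral sign. The case $p=1$ in (i) is handled the same way, using Tonelli with the kernel $\frac{s^n}{n!}e^{-s}\le 1$.
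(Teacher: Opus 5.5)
Your proposal is correct and follows the paper's proof: you combine the subordination formulas for $\Ca^*$, $\mathscr{C}^*_\alpha$ (resp.\ $\Ca$, $\mathscr{C}_\alpha$) with the intertwining relations of \Cref{intert}, passing the bounded map $V$ (resp.\ $V^*$) under the integral. You are also right that (ii) must read $V^*\circ\mathscr{C}_\alpha=\Ca\circ V^*$, which is what the paper's ``similarly'' actually yields, and your Fubini argument at $p=\infty$ supplies a justification the paper leaves implicit.
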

\begin{proof} (i) Take $ f\in L^p[0,1] $ for $1\leq p <\infty$.  By Theorem  \ref{th:cesaro_boundedness} and Theorem \ref{intert}(i), we obtain that
\begin{eqnarray*}
(V \circ  {\mathcal{C}}_{\alpha}^*f)(n) &=&\alpha \int_0^\infty (1-e^{-t})^{\alpha-1}(V\circ S(t)f)(n)ds \cr
&=&\alpha \int_0^\infty (1-e^{-t})^{\alpha-1}e^{-t}(\mathscr{S}(t)\circ Vf)(n)ds=(\mathscr{C}^*_{\alpha}\circ Vf)(n), \cr
\end{eqnarray*}
for $n\in \N_0$. Similarly, it is proven the item (ii). 
\end{proof}

\begin{remark} Take $\Real\alpha>0$, then 
$$\sigma(\mathcal{C_{\alpha}})=\sigma(\mathscr{C}_{\alpha})= \left\{ \frac{\Gamma(\alpha+1)\Gamma(z+\frac{1}{p'})}{\Gamma(\alpha+ z + \frac{1}{p'})} \, : \, z \in \overline{\C_+}\right\} \cup \{0\},$$ for $1<p\leq\infty, $
 see Theorem \ref{th:spectrum_C} and \cite[Theorem 7.6]{AbadiasMiana2018}, and 
 $$\sigma(\mathcal{C^*_{\alpha}})=\sigma(\mathscr{C}^*_{\alpha})= \left\{ \frac{\Gamma(\alpha+1)\Gamma(z+\frac{1}{p})}{\Gamma(\alpha+ z + \frac{1}{p})} \, : \, z \in \overline{\C_+}\right\} \cup \{0\},$$ for $1\leq p <\infty,$   see Theorem \ref{th:spectrum_C*}  and \cite[Theorem 7.6]{AbadiasMiana2018}.

\end{remark}

\section{Universality and closed invariant subspaces for the semigroup \texorpdfstring{$T(t)$}{T(t)}}
\label{sec:universality_lattice}

The Invariant Subspace Problem (ISP) is one of the most important open problems in Operator Theory on Hilbert Spaces. This problem is about finding if every linear bounded operator on an infinite-dimensional separable complex Hilbert space has a non-trivial closed invariant subspace. 

In 1960, G. C. Rota \cite{Rota1959,Rota1960} presented the concept of an operator whose lattice of invariant subspaces possesses a sufficiently rich structure to model any Hilbert space operator, and demonstrated the existence of such operators. These are the universal operators. 

\begin{definition}
    Let $H$ be an infinite-dimensional separable complex Hilbert space and let $\BB(H)$ denote the algebra of bounded linear operators on $H$. An operator $U \in \BB(H)$ is called universal if for every $T \in \BB(H)$, there exist $\lambda \in \C$, $M \in \lat U$ (the space of closed invariant subspaces for $U$), and an isomorphism $\Theta : H \to M$ such that $\Theta T = \lambda U \mid_M \Theta$. In other words, $T$ is similar to $\lambda U \mid_M$.
\end{definition}

Now, the similarity $\Theta$ maps invariant subspaces of $T$ to those of $U \mid_M$. Suppose $U$ is a universal operator for a separable, infinite-dimensional complex Hilbert space $H$. Then every bounded operator on $H$ has an invariant subspace if and only if every infinite-dimensional subspace $M$ of $H$ that is invariant under $U$ has a non-zero proper subspace $M_0$ that is also invariant under $U$. Thus, we have the following known characterization.

\begin{theorem}
    \label{th:equiv_isp}
    Let $H$ be a separable Hilbert space and $U \in \BB(H)$ be a universal operator. Then the following conditions are equivalent: 
    \begin{enumerate}[(i)]
        \item Every operator $T \in \BB(H)$ has a non-trivial closed invariant subspace.
        \item Every minimal non-trivial closed invariant subspace of $U$ is one-dimensional.
    \end{enumerate}
\end{theorem}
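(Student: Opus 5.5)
The plan is to prove both directions by passing through the similarity supplied by universality, using that similarities preserve lattices of closed invariant subspaces. Two facts will be used repeatedly. First, if $\Theta : H \to M$ is an isomorphism onto a closed subspace with $\Theta T = \lambda U|_M \Theta$, then $N \mapsto \Theta N$ is a bijection from $\lat T$ onto the closed subspaces of $M$ invariant under $\lambda U|_M$. Second, when $\lambda \neq 0$ these are exactly the closed subspaces of $M$ invariant under $U$; when $\lambda=0$ every closed subspace of $M$ is $\lambda U|_M$-invariant, so $T=0$. Throughout, "non-trivial" for a subspace of $H$ means different from $\{0\}$ and $H$, and a minimal non-trivial invariant subspace of $U$ means a nonzero $M\in\lat U$ containing no nonzero proper closed $U$-invariant subspace.

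\emph{(i)$\Rightarrow$(ii).} Let $M$ be a minimal non-zero closed invariant subspace of $U$, and suppose $\dim M\ge 2$. If $M$ is finite-dimensional, $U|_M$ has an eigenvector, which spans a one-dimensional invariant subspace; this contradicts minimality. If $M$ is infinite-dimensional, it is separable, so there is a unitary $W:H\to M$. Apply (i) to $W^{-1}U|_M W\in\BB(H)$ to obtain a non-trivial closed invariant subspace $N$. Then $WN$ is a nonzero proper closed $U$-invariant subspace of $M$, again a contradiction. Hence $\dim M=1$.

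\emph{(ii)$\Rightarrow$(i).} Let $T\in\BB(H)$. If $T$ has an eigenvalue, the span of an eigenvector is a non-trivial invariant subspace, since $\dim H=\infty$. This already covers $T=0$, so we may assume $T$ has no eigenvector. By universality, $\Theta T=\lambda U|_M\Theta$ with $\lambda\ne0$, since $\lambda=0$ would force $T=0$. Consequently $M$ is infinite-dimensional and $U|_M$ has no eigenvector; otherwise $\Theta^{-1}$ of that eigenvector would be an eigenvector of $T$. So $M$ contains no one-dimensional $U$-invariant subspace, and by (ii) $M$ is not minimal. It therefore contains a closed $U$-invariant $M_0$ with $\{0\}\ne M_0\subsetneq M$. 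Setting $N=\Theta^{-1}M_0$, we get a closed $T$-invariant subspace with $\{0\}\ne N\ne H$.

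The main obstacle is interpretive rather than technical. One must read (ii) correctly: it must prevent an infinite-dimensional invariant $M$ from being minimal, and this needs care when minimal subspaces fail to exist, since (ii) only constrains minimal ones. The argument above avoids this issue because non-minimality of $M$ directly yields the proper subspace $M_0$. The degenerate cases $\lambda=0$ and finite-dimensional $M$ are handled by the eigenvector reductions.
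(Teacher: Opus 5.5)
Your proof follows essentially the same route as the paper's, which is only sketched before the statement: closed invariant subspaces are transported through the similarity $\Theta$, so that (i) amounts to every infinite-dimensional $U$-invariant subspace containing a nonzero proper closed $U$-invariant subspace. The direction (i)$\Rightarrow$(ii) is correct as written, and it does not even use universality.

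There is one small unaddressed case in (ii)$\Rightarrow$(i).

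\textbf{The issue.} You state that non-trivial means different from $\{0\}$ and $H$, which is the paper's convention. However, your working definition of a minimal subspace does not exclude $M=H$, and in (ii)$\Rightarrow$(i) you tacitly use it in that broader sense. Under the paper's reading, (ii) says nothing about $H$ itself. Yet the subspace $M$ supplied by universality could a priori be $H$, and in that case the step ``by (ii) $M$ is not minimal'' is not justified.

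\textbf{The fix.} It takes one line, and it is exactly what your eigenvector reduction is good for. Apply universality to $T=I$. This gives $\nu\in\C$, $M_1\in\lat U$ and an isomorphism $\Theta_1:H\to M_1$ with $\Theta_1=\nu\, U|_{M_1}\Theta_1$. Then $\nu\neq 0$ and $U|_{M_1}=\nu^{-1}I$, so $U$ has eigenvectors. In your reduced case $U|_M$ has no eigenvector, so $M\neq H$. Hence $M$ is a non-trivial closed invariant subspace, and (ii) applies to it.

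Equivalently, a universal operator always has a non-trivial closed invariant subspace, so the two readings of (ii) coincide. The paper's sketch does not address this point either, since its intermediate criterion also ranges over $M=H$.
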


Following the previous theorem, we say that a non-trivial closed invariant subspace for $U$ is minimal if it contains no proper non-trivial closed invariant subspaces for $U$. Consequently, understanding the Invariant Subspace Problem for Hilbert spaces reduces to understanding the invariant subspaces of the single operator $U,$ because all infinite-dimensional separable Hilbert spaces are isomorphic.

In 1969, S. R. Caradus \cite{Caradus1969} established a systematic tool to produce  universal operators on Hilbert spaces.

\begin{theorem}
    Let $H$ be a separable Hilbert space and $T \in \BB(H)$ such that:
    \begin{enumerate}[(i)]
        \item $\dim \ker T = \infty$,
        \item and $T$ is surjective,
    \end{enumerate}
    then $T$ is universal for $H$.
\end{theorem}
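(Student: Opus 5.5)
The plan is to build, for an arbitrary operator $A_0 \in \BB(H)$, an explicit intertwining map $\Theta$ as a power series that uses a right inverse of $T$ together with an isometric copy of $H$ inside $\ker T$.

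\emph{Step 1 (right inverse).} Put $K := \ker T$, a closed subspace. The restriction $T|_{K^\perp}: K^\perp \to H$ is injective, and it is surjective because $T$ is. By the open mapping theorem it is therefore an isomorphism. Let $R: H \to K^\perp$ be its bounded inverse. Then
\[
TR = I, \qquad PR = 0,
\]
where $P$ denotes the orthogonal projection onto $K$.

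\emph{Step 2 (embedding into the kernel).} Since $K$ is an infinite-dimensional closed subspace of the separable Hilbert space $H$, it is itself separable and infinite-dimensional. Hence there is a unitary operator $J: H \to K$, so that $TJ = 0$.

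\emph{Step 3 (the intertwiner).} Given $A_0 \in \BB(H)$, choose $\mu \neq 0$ small enough that $A := \mu A_0$ satisfies $q := \|R\|\,\|A\| < 1$. Define
\[
\Theta x := \sum_{n=0}^\infty R^n J A^n x, \qquad x \in H.
\]
The series converges absolutely in norm because $\|R^n J A^n\| \le q^n$, so $\Theta$ is bounded.

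\emph{Step 4 (intertwining relation).} Apply $T$ termwise. The $n=0$ term vanishes since $TJ = 0$, and $TR^n = R^{n-1}$ for $n \ge 1$. Therefore
\[
T\Theta x = \sum_{n\ge 1} R^{n-1} J A^n x = \Theta A x .
\]
This gives $\Theta A_0 = \mu^{-1} T\Theta$, which is the required relation with $\lambda = \mu^{-1}$.

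\emph{Step 5 (bounded below).} Every term with $n \ge 1$ lies in $\ran R \subseteq K^\perp$, so $P$ kills it, while $PJx = Jx$. Hence $P\Theta x = Jx$, and
\[
\|\Theta x\| \ge \|Jx\| = \|x\| \quad \text{for all } x \in H.
\]
Thus $\Theta$ is injective with closed range. Set $M := \ran \Theta$. Then $\Theta: H \to M$ is an isomorphism, and $TM = \Theta A H \subseteq M$, so $M \in \lat T$. Moreover $T|_M = \mu\, \Theta A_0 \Theta^{-1}$, so $A_0$ is similar to $\lambda T|_M$.

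The main obstacle I expect is Step 5, namely showing that $\Theta$ is an isomorphism onto its range rather than merely bounded. The device of taking $R$ with range in $K^\perp$, so that $PR = 0$, is exactly what makes the lower bound immediate. Had a general right inverse been used instead, one would need the weaker estimate $\|\Theta x\| \ge (1 - q/(1-q))\|x\|$, which requires shrinking $\mu$ further so that $q < 1/2$.
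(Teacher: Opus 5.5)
Your proof is correct and complete. The paper does not prove this statement; it quotes Caradus's 1969 criterion. Your argument is essentially the standard proof of that criterion: a right inverse $R$ of $T$ with range in $(\ker T)^\perp$, a unitary $J\colon H\to\ker T$, the rescaled power-series intertwiner $\Theta=\sum_{n\ge 0}R^nJA^n$ satisfying $T\Theta=\Theta A$, and the lower bound $\|\Theta x\|\ge\|P\Theta x\|=\|x\|$.
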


In the following we see that the properties of the spectrum of $T(t)$ studied in \Cref{sec:spectra_semigroups} give as a consequence the universality of translations of $T(t)$ on $L^2[0,1]$. Additionally, we will present several results concerning the minimal subspaces of $T(t)$. For convenience, we denote $\S := \{ \lambda \in \C \, : \, \Real \lambda < -1/2 \}.$ 

\begin{theorem}
    \label{th:T_is_universal}
    Let $t>0$ and $\mu \in B(0,e^{-t/2}),$ then the operator $\mu - T(t)$ is universal on $L^2[0,1]$. In particular, if every minimal non-trivial closed invariant subspace of $T(t)$ is one-dimensional for some $t>0$, then the Invariant Subspace Problem has a positive answer.
\end{theorem}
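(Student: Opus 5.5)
We verify the two hypotheses of Caradus' theorem for $\mu - T(t)$ on $L^2[0,1]$, taking $p=p'=2$ throughout, so that $e^{-t/p'}=e^{-t/2}$.

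\emph{Infinite-dimensional kernel.} Fix $t>0$ and $\mu\in B(0,e^{-t/2})$.
\begin{itemize}
\item If $\mu=0$, the proof of \Cref{th:fine_spectrum_T} gives $\ker T(t)=M_{2,e^{-t}}$, which is infinite-dimensional.
\item If $\mu\neq 0$, write $\mu=e^{\lambda t}$ with $\Real\lambda<-1/2$, i.e.\ $\lambda\in\S$. By \Cref{prop:point_spectrum} each $g_{\lambda+2\pi i n/t}(s)=s^{-\lambda-1-2\pi i n/t}$, $n\in\Z$, lies in $L^2[0,1]$. Since $T(t)g_\nu=e^{\nu t}g_\nu$, these are all eigenfunctions of $T(t)$ for the same eigenvalue $e^{\lambda t}=\mu$.
\item These functions are linearly independent: they are distinct power functions $s^{-\lambda-1}s^{-2\pi i n/t}$, and the characters $s^{-2\pi i n/t}=e^{-2\pi i n\log s/t}$ are independent, e.g.\ after the substitution $s=e^{-x}$ they become distinct exponentials.
\end{itemize}
Hence $\dim\ker(\mu-T(t))=\infty$. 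This is essentially item (v) of \Cref{th:fine_spectrum_T}.

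\emph{Surjectivity.}
\begin{itemize}
\item If $\mu=0$, the proof of \Cref{th:fine_spectrum_T} exhibits a right inverse $g(s)=e^{t}\chi_{[0,e^{-t})}(s)f(e^{t}s)$ with $T(t)g=f$. We do need this formula rather than just the norm statement. With $p=2$, the change of variables gives $\norm{g}_2^2=e^{2t}\cdot e^{-t}\norm{f}_2^2=e^{t}\norm{f}_2^2$. So $g\in L^2$, though not with equal norm as stated there; finiteness is all we need.
\item If $\mu\neq0$, use $\sigma_r(T(t))=\emptyset$ and $\sigma_{ap}(T(t))=\overline{B(0,e^{-t/2})}$. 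These give dense range, but closedness is not automatic, since $\mu$ lies in the approximate point spectrum merely because of nontrivial kernel.
\item So argue directly, mimicking \Cref{rangeA}. Let $R$ be the right inverse of $T(t)$ from the first bullet, $Rf(s)=e^{t}\chi_{[0,e^{-t})}(s)f(e^{t}s)$, so that $T(t)R=I$. Then $\mu-T(t)=-(I-\mu R)T(t)$ after multiplying suitably: indeed $(\mu-T(t))R=\mu R-I$.
\item The operator $R$ equals $e^{t}S(t)$, so $\norm{R}=e^{t}e^{-t/2}=e^{t/2}$ on $L^2$. For $|\mu|<e^{-t/2}$ we get $\norm{\mu R}<1$, so $I-\mu R$ is invertible by a Neumann series.
\item Therefore $(\mu-T(t))\bigl(-R(I-\mu R)^{-1}\bigr)=I$. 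Thus $\mu-T(t)$ has a bounded right inverse and is surjective.
\end{itemize}

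\emph{Conclusion.} Caradus' theorem now gives that $\mu-T(t)$ is universal. For the last assertion, note that $\mu-T(t)$ and $T(t)$ have exactly the same closed invariant subspaces, hence the same minimal ones. Taking $\mu=0$, or any admissible $\mu$, \Cref{th:equiv_isp} converts one-dimensionality of all minimal non-trivial invariant subspaces of $T(t)$ into a positive answer to the ISP. The only delicate step is surjectivity for $\mu\neq0$, which the factorization through $R=e^{t}S(t)$ handles.
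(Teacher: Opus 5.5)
Your proof is correct, and it departs from the paper only in how surjectivity of $\mu-T(t)$ is obtained. The paper gets dense range from $\sigma_r(T(t))=\emptyset$. It gets closed range from the Closed Range Theorem applied to the dual $S(t)$: because $\sigma_{point}(S(t))=\emptyset$ and $\sigma_{ap}(S(t))$ is just the circle $\abs{z}=e^{-t/2}$, the operator $\mu-S(t)$ is bounded below for $\abs{\mu}<e^{-t/2}$. You instead write down a bounded right inverse $-R(I-\mu R)^{-1}$, using only $R=e^{t}S(t)$, $T(t)R=I$, $\norm{R}=e^{t/2}$ and a Neumann series.

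Your route is more elementary and self-contained: it needs no duality, no Closed Range Theorem, and no use of the fine spectrum of $S(t)$. It also yields the explicit bound $e^{t/2}/(1-\abs{\mu}e^{t/2})$ for the right inverse. Moreover, it works verbatim on $L^p[0,1]$ for $\abs{\mu}<e^{-t/p'}$, where $\norm{R}=e^{t/p'}$. The paper's route has the merit of reusing spectral information it has already computed.

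Your two side corrections are also right. The paper's preimage satisfies $\norm{g}_2=e^{t/2}\norm{f}_2$, not $\norm{f}_2$. And the eigenfunctions for $e^{\lambda t}$ are $g_{\lambda+2\pi i n/t}$; the paper's $g_{\lambda+2k\pi i}$ only works when $kt\in\Z$.

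One slip to fix: the identity $\mu-T(t)=-(I-\mu R)T(t)$ is false, since $RT(t)$ is multiplication by $\chi_{[0,e^{-t})}$ rather than $I$. The correct factorization is $\mu-T(t)=-T(t)(I-\mu R)$. You never actually use the wrong version, only $(\mu-T(t))R=\mu R-I$, which is correct, so the argument stands.

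The correct factorization in fact delivers both Caradus hypotheses at once. $\mu-T(t)$ is the surjection $T(t)$ composed with an invertible operator. Its kernel, $\ker(\mu-T(t))=(I-\mu R)^{-1}M_{2,e^{-t}}$, is infinite-dimensional. So even the eigenfunction computation becomes unnecessary.
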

\begin{proof}
    On one hand, as we have seen in the proof of \Cref{th:fine_spectrum_T} we have that $\ker (\mu - T(t))$ is infinite-dimensional. In particular, if $\mu=e^{\lambda t}$ with $\lambda\in\S,$ \begin{equation*}\ker (\mu - T(t))=\overline{\operatorname{span} \{ g_{\lambda+2k\pi i}\,:\,k\in\Z \}}^{L^2[0,1]}.\end{equation*}
    
    On the other hand, as $\sigma_{r}(T(t)) = \emptyset$ (see item (iv) of \Cref{th:fine_spectrum_T}) we have that $\ran (\mu - T(t))$ is dense. Lastly, by the Closed Range Theorem, (see \cite[page 205]{Yosida1980} for a reference)  $\ran (\mu - T(t))$ is closed if and only $\ran (\mu - S(t))$ is closed. Since $\sigma_{point}(S(t)) = \emptyset$ and $\sigma_{ap}(S(t)) = \delta B(0,e^{-t/2})$ (see \Cref{th:fine_spectrum_S}) this means that for $\mu \in B(0,e^{-t/2})$, $\ran (\mu - S(t))$ is closed and thus $\ran (\mu - T(t))$ is closed as well. The application of the Caradus criterion stated before finishes the proof.

    Lastly, as the closed invariant subspaces of $\mu - T(t)$ and $T(t)$ are the same, \Cref{th:equiv_isp} finishes the proof, or merely taking $\mu=0.$
\end{proof}

As discussed in the introduction, our goal is to understand the eigenfunctions and closed invariant subspaces of the operator $T(t)$ for $t > 0$. The subspaces 
\[
    M_{r}:=M_{2,r} = \{ f \in L^2[0,1] \mid f(s) = 0 \text{ a.e. for } s \in (0,r) \}, \quad r \in (0,1),
\]
defined at the beginning of \Cref{sec:spectra_semigroups}, are invariant under $T(t)$. Functions in $M_{e^{-t}}$ are all eigenfunctions with eigenvalue $0$. Moreover, for each $r \in (0,1)$, $M_r \cap M_{e^{-t}} \neq \emptyset$, implying that every closed invariant subspace $M_r$ contains an eigenfunction. In particular, $T(t)$ is nilpotent on every closed invariant subspace $M_r$ with $r\in (0,1).$

Furthermore, for $\lambda \in \S$, the functions of the form $g_{\lambda+2k\pi i},$ $k\in\Z,$ are eigenfunctions corresponding to the eigenvalue $e^{t\lambda} \in B(0,e^{-t/2})$. Consequently, the subspace spanned by each such eigenfunction is also invariant under $T(t)$. In particular, from the full M\"untz–Szász theorem in $L^2[0, 1]$ (see \cite{Szasz1953}) one can deduce that if $\{s_n\}_{n\in\N}\subset \S,$ then the closed invariant subspaces $$\overline{\operatorname{span} \{ g_{\lambda}\,:\,\lambda\in \{s_n\}_{n\in\N} \}}^{L^2[0,1]}$$ of $T(t)$ are non-trivial if and only if $$\sum_{n=0}^{\infty} \frac{-(2\Real s_n+1)}{|s_n|^2}<\infty.$$

In \cite[Proposition 3.8]{AglerMcCarthy2023_Beurling} the authors also showed that for $\lambda\in\S,$ the $(m + 1)^{\text{st}}$ generalized eigenspace of $\CC$ ($=\CC_1$) with eigenvalue $-\frac{1}{\lambda}$ lies in the linear span of $\{ g_{\lambda}, g_{\lambda} \log, \ldots , g_{\lambda}\log^m \},$ that is, $$\ker (\CC+\frac{1}{\lambda})^{m+1}=\operatorname{span} \{ g_{\lambda}, g_{\lambda} \log, \ldots , g_{\lambda}\log^m \}.$$ Similarly, for $t>0$ given, one can get for $\lambda\in\S$ that the $(m + 1)^{\text{st}}$ generalized eigenspace of $T(t)$ with eigenvalue $e^{\lambda t}$ is the nontrivial closed invariant subspace \begin{equation*}\label{ker}\ker (e^{\lambda t} - T(t))^{m+1}=\overline{\operatorname{span} \{ g_{\lambda+2k\pi i}, g_{\lambda+2k\pi i} \log, \ldots , g_{\lambda+2k\pi i}\log^m \,:\,k\in\Z \}}^{L^2[0,1]},\end{equation*}
since in general $T(t)g_{\lambda}\log^m=e^{\lambda t} g_{\lambda}\sum_{j=0}^m (-t)^{m-j}\log^j.$

The natural next question is whether there are closed invariant subspaces of $T(t)$ beyond those already identified. Notably, the closed invariant subspaces identified so far are independent of $t$, prompting us to ask if there are any that depend on the value of $t$.

We now turn our attention to the relationship between the closed invariant subspaces of $T(t)$ and the Cesàro-Hardy operator defined on $L^2[0,1]$ by
\[
    \CC f(s) =  \frac{1}{s} \int_0^s f(u) \, du = \int_0^\infty T(t) f(s) \, dt, \quad s \in (0,1].
\]
Note that for a closed subspace $M$ invariant under $T(t)$ for all $t > 0$, $M$ is also invariant under $\CC$. In other words, this means that 
\begin{equation}
    \label{eq:Lat_T(t)_subset_C}
    \cap_{t>0} \lat T(t) \subseteq \lat \CC.
\end{equation}
The closed invariant subspaces of $\CC$ have been recently characterized in \cite[Theorem 1.4]{AglerMcCarthy2023_Beurling}, introducing the concept of monomial space, which we present in the following lines.

Recall that $\lambda \in \S$ if and only if $g_\lambda\in L^2[0,1]$. For $S$ a finite subset of $\S$ we let $\MM(S)$ denote the span in $L^2[0,1]$ of the monomials $g_{\lambda}$ whose exponents lie in $S$, i.e.,
\[
    \MM(S) = \left\{ \sum_{\lambda\in S} a(\lambda) g_\lambda \, : \, a(\lambda)\in \C \text{ for }\lambda\in S \right\}.
\]
We refer to sets of $L^2[0,1]$ of this form as finite monomial spaces. The following definition appears in \cite{AglerMcCarthy2023_Beurling}.

\begin{definition}
    We say that  a subspace $\MM$ of $L^2[0,1]$ is a monomial space if there exists a sequence $\MM_n$ of finite monomial spaces such that $\MM_n \to \MM$ as $n \to \infty$, i.e.,
    \[
        \MM = \left\{f \in L^2[0,1] \, : \, \lim_{n \to \infty} \dist(f,\MM_n) = 0 \right\}.
    \]
\end{definition}

Equipped with these definitions we can state \cite[Theorem 1.4]{AglerMcCarthy2023_Beurling}, adding the characterization of the closed subspaces which are invariant under $T(t)$ for every $t>0$.

\begin{theorem}
    \label{th:characterization_lattice}
    Let $M$ be a closed non-zero subspace of $L^2[0,1]$. Then the following are equivalent:\begin{itemize}
        \item[(i)] $M \in \lat \CC$,
        \item[(ii)] $M$ is a monomial space,
        \item[(iii)] $M \in \cap_{t>0} \lat T(t)$.
    \end{itemize}
\end{theorem}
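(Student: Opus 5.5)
The equivalence (i)$\Leftrightarrow$(ii) is exactly \cite[Theorem 1.4]{AglerMcCarthy2023_Beurling}, which we take as given. The inclusion (iii)$\Rightarrow$(i) is \eqref{eq:Lat_T(t)_subset_C}. So the only new content is (ii)$\Rightarrow$(iii), or equivalently (i)$\Rightarrow$(iii): every monomial space $\MM$ is invariant under $T(t)$ for all $t>0$.

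The plan is to prove the invariance first for finite monomial spaces and then pass to the limit. Fix $t>0$ and let $\MM$ be a monomial space with $\MM_n\to\MM$.

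\textbf{Step 1: finite monomial spaces.} For $\lambda\in\S$ we have $T(t)g_\lambda=e^{\lambda t}g_\lambda$ (\Cref{prop:point_spectrum}). Hence each finite monomial space $\MM(S)$ is $T(t)$-invariant.

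\textbf{Step 2: passage to the limit.} Let $f\in\MM$, so that $\dist(f,\MM_n)\to0$. Choose $h_n\in\MM_n$ with $\norm{f-h_n}_2\to0$. Then $T(t)h_n\in\MM_n$ by Step 1, and
\[
\dist(T(t)f,\MM_n)\le \norm{T(t)f-T(t)h_n}_2\le e^{-t/2}\norm{f-h_n}_2\to0,
\]
using \Cref{pr:boundedness_semigroup}. By the definition of $\MM$ as the set of $f$ with $\dist(f,\MM_n)\to0$, this gives $T(t)f\in\MM$. Since $t>0$ was arbitrary, $\MM\in\cap_{t>0}\lat T(t)$.

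The only potential subtlety is that the monomial-space definition must be read literally, as the set of $f$ with $\dist(f,\MM_n)\to 0$, and not merely as an abstract limit of subspaces. With that reading the argument uses nothing beyond the eigenfunction relation $T(t)g_\lambda=e^{\lambda t}g_\lambda$ and the uniform bound $\norm{T(t)}\le 1$.

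An alternative route, (i)$\Rightarrow$(iii) directly, would express $T(t)$ through the functional calculus of $\CC=(-A)^{-1}$ (\Cref{rem5.5}), namely $T(t)=e^{-t\CC^{-1}}$. That route requires approximating this operator by rational functions of $\CC$ and is harder, so the monomial route above is the one to follow.
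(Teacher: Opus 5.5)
Your proposal is correct and is essentially the paper's proof: (i)$\Leftrightarrow$(ii) is quoted from Agler--McCarthy, and (iii)$\Rightarrow$(i) is the inclusion $\cap_{t>0}\lat T(t)\subseteq\lat\CC$. For (ii)$\Rightarrow$(iii) the paper uses exactly your two steps: $T(t)\MM_n\subseteq\MM_n$ because finite monomial spaces are spanned by eigenfunctions, and then $\dist(T(t)f,\MM_n)\le\norm{T(t)}\,\dist(f,\MM_n)\to 0$, where your choice of nearest points $h_n\in\MM_n$ is just a more explicit way of writing the same infimum bound.
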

\begin{proof}
    The equivalence between items (i) and (ii) is established in \cite[Theorem 1.4]{AglerMcCarthy2023_Beurling}. The implication (iii) $\implies$ (i) follows from \eqref{eq:Lat_T(t)_subset_C}.

    To show (ii) $\implies$ (iii), let $M$ be a monomial space in $L^2[0,1]$ and $f \in M$. By definition, there exists a sequence of finite monomial spaces $\MM_n$ such that $\MM_n \to \MM$. Since $T(t)\MM_n \subseteq \MM_n$ (as $\MM_n$ is the span of eigenfunctions of $T(t)$), we have:
    \[
    \begin{aligned}
        \dist(T(t)f, \MM_n) & = \inf_{m \in \MM_n} \norm{T(t)f - m} \leq \inf_{m' \in T(t)\MM_n} \norm{T(t)f - m'} \\
        & = \inf_{m \in \MM_n} \norm{T(t)f - T(t)m} \leq \norm{T(t)} \inf_{m \in \MM_n} \norm{f - m} \to 0.
    \end{aligned}
    \]
    Thus, $T(t)f \in M$ for every $t > 0$, completing the proof.
\end{proof}

\begin{remark} It is known that for a contraction $C_0$-semigroup, the lattice of the closed invariant subspaces of the semigroup is the same that the lattice of the closed invariant subspaces of the cogenerator (see \cite[Chap. 2, Theorem 10.9]{Fuhrmann1981}). In particular, $(e^{t}T(2t))_{t\geq 0}$ is contraction $C_0$-semigroup on $L^2[0,1]$ generated by $2A+I,$ and whose cogenerator is $V=((2A+I)+I)((2A+I)-I)^{-1},$ see \Cref{pr:boundedness_semigroup} and \Cref{th:semigroups}. Therefore, by \Cref{rem5.5} one can write $$-2\mathcal{C}=4((2A+I)-I)^{-1}=V-I,$$ and the equivalence between items (i) and (iii) on \Cref{th:characterization_lattice} can be also deduced from this remark.

\end{remark}

To conclude this section, we present results on the minimal invariant subspaces of $T(t)$ for a fixed $t>0$. To know the minimal invariant subspaces of $T(t)$ would help to come near to the solution of the ISP. It is known that if $M$ is a minimal invariant subspace of $T(t)$, then 
\[
    M = K_f := \overline{\operatorname{span} \{ f, T(t)f, T(t)^2 f, \dots \}}^{L^2[0,1]}
\]
for some nonzero $f \in M$. Therefore, to study minimal invariant subspaces of $T(t)$, we need to understand the cyclic subspaces $K_f$ for $f \in L^2[0,1]$.

\begin{theorem}
    Let $\lambda \in \C$ such that $\Real \lambda < -1/2,$ $g = g_\lambda f$ for some $f \in L^2[0,1],$ and $L\in\C\setminus\{0\}$ such that $\lim_{s \to 0^+} f(s) = L$. Then $g_\lambda \in K_g$. In particular $K_g$ is minimal if and only if $f$ is a non-zero constant, that is, $K_g$ is one dimensional generated by the eigenfunction $g_{\lambda}.$
\end{theorem}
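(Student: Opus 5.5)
The plan is to normalize the iterates of $T(t)$ on $g$ by the eigenvalue $e^{\lambda t}$ and show that they converge to a multiple of $g_\lambda$. Since $T(t)g_\lambda=e^{\lambda t}g_\lambda$, a direct computation gives
\[
T(t)^n g(s)=T(nt)(g_\lambda f)(s)=e^{-nt}\,\frac{1}{(e^{-nt}s)^{\lambda+1}}\,f(e^{-nt}s)=e^{\lambda nt}\,g_\lambda(s)\,f(e^{-nt}s).
\]
Hence $e^{-\lambda n t}T(t)^n g = g_\lambda\cdot f(e^{-nt}\,\cdot)$. Each of these functions lies in $K_g$, because $K_g$ is a closed linear subspace containing all $T(t)^n g$.

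We will show that $g_\lambda f(e^{-nt}\cdot)\to L g_\lambda$ in $L^2[0,1]$. For each fixed $s\in(0,1]$, $f(e^{-nt}s)\to L$ as $n\to\infty$, using $\lim_{s\to0^+}f(s)=L$. Since $\Real\lambda<-1/2$, we have $|g_\lambda|^2=s^{-2(\Real\lambda+1)}\in L^1$. To apply dominated convergence we need $f$ bounded near $0$, which the limit hypothesis gives: $|f|\le |L|+1$ on $(0,\delta)$. Thus for $n$ large enough that $e^{-nt}<\delta$, we get $|f(e^{-nt}s)|\le |L|+1$ for all $s\in(0,1]$, and $|g_\lambda|^2(|L|+1)^2$ is an integrable dominating function. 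This step needs care, because the hypothesis is really about $f$ near $0$, and pointwise convergence needs $f$ to be a genuine function with that limit. I will read the hypothesis as an essential limit and argue almost everywhere. This is the main technical point. Dividing by $L\neq0$, $K_g$ closed, then gives $g_\lambda\in K_g$.

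For the minimality claim, first suppose $K_g$ is minimal. Then $\mathrm{span}\{g_\lambda\}$ is a nonzero closed invariant subspace contained in $K_g$, because $g_\lambda$ is an eigenfunction (\Cref{prop:point_spectrum}). Minimality forces $K_g=\mathrm{span}\{g_\lambda\}$. So $g=c\,g_\lambda$, and since $g_\lambda$ vanishes nowhere, $f=c$ a.e. with $c=L\neq0$. Conversely, if $f$ is a nonzero constant, then $g$ is a multiple of the eigenfunction $g_\lambda$. So $K_g=\mathrm{span}\{g_\lambda\}$ is one-dimensional, and it is trivially minimal among nonzero closed invariant subspaces.
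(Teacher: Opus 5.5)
Your proof is correct and follows the paper's argument. You normalize $T(nt)g$ by $e^{-\lambda nt}$ to get $g_\lambda f(e^{-nt}\cdot)$, show this converges to $Lg_\lambda$ in $L^2[0,1]$, and deduce the minimality claim from the fact that $\operatorname{span}\{g_\lambda\}$ is a nonzero closed invariant subspace of $K_g$. The only difference is that the paper bypasses dominated convergence: once $e^{-nt}<\delta$ one has $\abs{f(e^{-nt}s)-L}<\eta$ for all $s\in(0,1)$, so the $L^2$ error is at most $\eta^2\norm{g_\lambda}_2^2$ directly, and your essential-limit reading is a harmless, slightly more careful refinement of the same step.
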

\begin{proof}
First note that  $g_\lambda f \in L^2[0,1]$ for $\Real \lambda < -1/2$ since $\lim_{s \to 0^+} f(s)$ is a complex number. 
    
Now consider the sequence $(e^{-nt\lambda} (T(t)^n g)_{n\in\N},$ which is clearly contained in $K_g$ and fix $\e>0$. Notice that,
    \[
        \begin{aligned}
            \int_0^1 \abs{e^{-nt\lambda} T(nt) g(s) - L g_\lambda(s)}^2 \, ds & = \int_0^1 \abs{e^{-nt(\lambda+1)} g(e^{-nt}s) - L g_\lambda(s)}^2 \, ds \\ 
            & = \int_0^{1} \abs{ g_\lambda(s)}^2 \abs{f(e^{-nt}s) - L}^2 \, ds.
        \end{aligned}
    \]
    Now, as $\lim_{s \to 0^+} f(s) = L \neq 0$, there exists $\delta>0$ such that if $u \in (0,\delta)$ then $\abs{f(u) - L} < \sqrt{-\e(2\Real \lambda +1)}$. Take $n$ big enough such that $e^{-nt} < \delta$, then
    \[
        \begin{aligned}
            \int_0^1 \abs{e^{-nt\lambda} T(nt) g(s) - L g_\lambda(s)}^2 \, ds & \leq -\e(2\Real \lambda +1)\int_0^{1} \abs{ g_\lambda(s)}^2 \, ds = \e.
        \end{aligned}
    \]
    So, $Lg_\lambda \in K_g,$ and $K_g$ is minimal if and only if $f$ is a non-zero constant because the subspace generated by $g_\lambda$ is one-dimensional.
\end{proof}

\begin{theorem}Let $\lambda \in \C$ such that $\Real \lambda < -1/2,$ $m\in\N,$ $g = g_\lambda\log^m f$ for some $f \in L^2[0,1],$ and $L\in\C\setminus\{0\}$ such that $\lim_{s \to 0^+} f(s) = L$. Then $g_\lambda \in K_g.$ In particular $K_g$ contains the eigenfunction $g_{\lambda},$ and therefore it is not a minimal closed invariant subspace.

\end{theorem}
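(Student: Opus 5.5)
The plan is to repeat the argument of the previous theorem, now also absorbing the logarithmic factor. That means producing an explicit sequence in $K_g$, a scalar multiple of $T(t)^n g = T(nt)g$, that converges to $L g_\lambda$ in $L^2[0,1]$.

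First I compute the action of $T(\tau)$ on functions of the form $g_\lambda \log^m h$. Since $g_\lambda(e^{-\tau}s) = e^{\tau(\lambda+1)} g_\lambda(s)$ and $\log(e^{-\tau}s) = \log s - \tau$, we get
\[
    T(\tau)\bigl(g_\lambda \log^m h\bigr)(s) = e^{\lambda\tau} g_\lambda(s)\,(\log s-\tau)^m\, h(e^{-\tau}s).
\]
This is the same identity already used above for the generalized eigenspaces. Applying it with $\tau = nt$ and $h = f$, I set
\[
    h_n := \frac{e^{-n t\lambda}}{(-nt)^m}\, T(t)^n g = g_\lambda(s)\Bigl(1-\frac{\log s}{nt}\Bigr)^m f(e^{-nt}s),
\]
and each $h_n$ clearly lies in $K_g$.

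Next I show $h_n \to L g_\lambda$ in $L^2[0,1]$ by dominated convergence. Pointwise convergence is immediate for each fixed $s\in(0,1)$: we have $\log s/(nt)\to 0$, and $e^{-nt}s\to 0^+$, so $f(e^{-nt}s)\to L$.

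For the domination, choose $\delta>0$ with $|f(u)|\le |L|+1$ for $u\in(0,\delta)$. Take $n$ large enough that $e^{-nt}<\delta$ and $nt\ge 1$. Then for all $s\in(0,1)$,
\[
    \bigl|h_n(s)-Lg_\lambda(s)\bigr|^2 \le C\,|g_\lambda(s)|^2\bigl(1+|\log s|\bigr)^{2m},
\]
with $C$ independent of $n$. The right-hand side is integrable precisely because $\Real\lambda<-1/2$: the function $s^{-2\Real\lambda-2}(1+|\log s|)^{2m}$ is integrable near $0$, since the logarithm only perturbs the power. The same bound, together with $f\in L^2$ and the boundedness of $g_\lambda\log^m$ on $[\delta,1]$, shows that $g\in L^2[0,1]$, so $K_g$ makes sense. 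Hence $Lg_\lambda\in K_g$, and as $L\neq 0$, $g_\lambda\in K_g$.

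Finally, non-minimality. The line $\operatorname{span}\{g_\lambda\}$ is a non-zero closed $T(t)$-invariant subspace contained in $K_g$, so it suffices to show $K_g\neq \operatorname{span}\{g_\lambda\}$, that is, that $g$ is not a multiple of $g_\lambda$. If $g = c\,g_\lambda$, then $f = c/\log^m s$ a.e. near $0$, which forces $f(s)\to 0$ as $s\to 0^+$. This contradicts $L\neq 0$.

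The only genuinely delicate step is the uniform domination. The factor $(\log s - nt)^m$ grows with $n$, which is why one normalizes by $(-nt)^m$ rather than just $e^{-nt\lambda}$. After that normalization, the remaining issue is that $f$ is controlled only near $0$. This is harmless, because $f(e^{-nt}\,\cdot)$ only samples $f$ on $(0,e^{-nt})$.
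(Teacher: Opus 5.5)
Your proposal is correct and is essentially the paper's proof: you use the same sequence $e^{-nt\lambda}T(nt)g/(nt)^m$ in $K_g$, with the sign $(-1)^m$ absorbed into the normalization so that the limit is $Lg_\lambda$ rather than $(-1)^m L g_\lambda$. Where the paper uses a two-step $\varepsilon$-estimate (which also drops the binomial coefficients when expanding $\log^m(e^{-nt}s)$), you use a single dominated-convergence argument, and you explicitly check that $g\notin\operatorname{span}\{g_\lambda\}$; that check is what actually yields non-minimality, and the paper leaves it implicit.
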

\begin{proof}

First note that  $g_\lambda \log^m f  \in L^2[0,1]$ for $\Real \lambda < -1/2$ since $\lim_{s \to 0^+} f(s)$ is a complex number. 

Now consider the sequence $ (\frac{e^{-nt \lambda}T(nt)g}{(nt)^m})_{n\in\N},$ which is contained in $K_g$ and fix $\e>0$. Denote by $h_n = \frac{e^{-nt \lambda}T(nt)(g_{\lambda}\log^m)}{(nt)^m}.$ On one hand
    \[
        \int_0^1 \abs{\frac{e^{-nt \lambda}T(nt)g(s)}{(nt)^m} - L h_n(s)}^2  ds  = \frac{1}{(nt)^{2m}} \int_0^1 \abs{ g_{\lambda}(s)\log^m(e^{-nt}s)}^2 \abs{f(e^{-nt}s) - L}^2 ds.
    \]
    As $\lim_{s \to 0^+} f(s) = L \neq 0$, there exists $\delta>0$ such that if $s \in (0,\delta)$ then $\abs{f(s) - L} < \e$. Take $n$ big enough such that $e^{-nt} < \delta$, then
    \[
      \begin{aligned}  \int_0^1 \abs{\frac{e^{-nt \lambda}T(nt)g(s)}{(nt)^m} - L h_n(s)}^2 \, ds & \leq  \frac{\e^2}{(nt)^{2m}} \int_0^1 \abs{ g_{\lambda}(s)\log^m(e^{-nt}s)}^2 \, ds \\
      &=\e^2 \int_{0}^{1} \abs{g_{\lambda}(s)}^2\abs{\sum_{j=0}^m\frac{(-1)^{m-j}\log^j(s)}{(nt)^j}}^2\\
      &\leq  \e^2  \sum_{j=0}^m \frac{1}{t^j}\int_0^{1} \abs{g_{\lambda}(s)\log^j(s)}^2\,ds \\
      &= C  \e^2,
      \end{aligned}
    \] with $\displaystyle C=\sum_{j=0}^m \frac{1}{t^j}\int_0^{1} \abs{g_{\lambda}(s)\log^j(s)}^2\,ds.$
    
    On the other hand,  \[
      \begin{aligned}  \int_0^1 \abs{L h_n(s)-(-1)^m L g_{\lambda}(s)}^2 \, ds & = L^2 \int_0^1 \abs{ g_{\lambda}(s)}^2\abs{\frac{\log^m(e^{-nt}s)}{(nt)^m}-(-1)^m}^2 \, ds \\
      &= L^2 \int_0^1 \abs{g_{\lambda}(s)}^2\abs{\sum_{j=1}^m\frac{(-1)^{m-j}\log^j(s)}{(nt)^j}}^2 \,ds \\
         &\leq L^2 \sum_{j=1}^m \frac{1}{(nt)^j}\int_0^{1} \abs{g_{\lambda}(s)\log^j(s)}^2\,ds  \to 0,
      \end{aligned}
    \] as $n\to\infty.$ So we conclude that $Lg_\lambda \in K_g.$
\end{proof}

\printbibliography 

\end{document}